\newcommand{\hl}[1]{\textcolor{blue}{#1}}
\newcites{supp}{References}
\newcommand{\ourtitle}{{Model-Agnostic Bounds for Augmented Inverse Probability Weighted Estimators' Wald-Confidence Interval Coverage in Randomized Controlled Trials}}
\title{\ourtitle}
\date{}
\author{Hongxiang Qiu}
\affil{Department of Epidemiology and Biostatistics, College of Human Medicine, Michigan State University}
\newtheorem{theorem}{Theorem}
\newtheorem{lemma}{Lemma}
\newtheorem{corollary}{Corollary}
\newtheorem{proposition}{Proposition}
\theoremstyle{definition}
\newtheorem{condition}{Condition}
\newtheorem{seriescond}{SeriesCond}
\DeclareMathOperator{\expit}{expit}
\newcommand{\real}{{\mathbb{R}}}
\newcommand{\ind}{{\mathbbm{1}}}
\newcommand{\expect}{{E}}
\newcommand{\Var}{{\mathrm{var}}}
\newcommand{\transform}{{\mathcal{T}}}
\newcommand{\funclass}{{\mathcal{F}}}
\newcommand{\gfunclass}{{\mathcal{G}}}
\newcommand{\intd}{{\mathrm{d}}}
\newcommand{\smallo}{{\mathrm{o}}}
\newcommand{\bigO}{{\mathrm{O}}}
\newcommand{\IPW}{{\mathrm{IPW}}}
\newcommand{\const}{{\mathcal{C}}}
\newcommand{\VC}{{\mathrm{VC}}}
\newcommand{\VChull}{{\mathrm{VC-hull}}}
\newcommand{\empro}{{\mathbb{G}}}
\newcommand{\ltwo}{{\ell_2}}
\begin{document}

\maketitle

\begin{abstract}
    Nonparametric estimators, such as the augmented inverse probability weighted (AIPW) estimator, have become increasingly popular in causal inference.
    Numerous nonparametric estimators have been proposed, but they are all asymptotically normal with the same asymptotic variance under similar conditions, leaving little guidance for practitioners to choose an estimator.
    In this paper, I focus on another important perspective of their asymptotic behaviors beyond asymptotic normality, the convergence of the Wald-confidence interval (CI) coverage to the nominal coverage.
    Such results have been established for simpler estimators (e.g., the Berry-Esseen Theorem), but are lacking for nonparametric estimators.
    I consider a simple but practical setting where the AIPW estimator based on a black-box nuisance estimator, with or without cross-fitting, is used to estimate the average treatment effect in randomized controlled trials.
    I derive non-asymptotic Berry-Esseen-type bounds on the difference between Wald-CI coverage and the nominal coverage.
    I also analyze the bias of variance estimators, showing that the cross-fit variance estimator might overestimate while the non-cross-fit variance estimator might underestimate, which might explain why cross-fitting has been empirically observed to improve Wald-CI coverage even if both estimators converge to the same asymptotic normal distribution.
\end{abstract}

\section{Introduction} \label{sec: intro}

Flexible nonparametric estimators constructed based on semiparametric efficiency theory, such as augmented inverse probability weighted (AIPW) estimators \protect\citep{Robins1994,Robins1995}, double/debiased machine learning (DML) estimation \protect\citep{Chernozhukov2017,Chernozhukov2018}, and targeted minimum loss-based estimation (TMLE) \protect\citep{VanderLaan2018}, have gained increasing popularity in causal inference.
Although these nonparametric estimators rely on substantively weaker model assumptions than parametric models and may use flexible nuisance estimators that converge to the truth slowly, they are root-$n$ consistent and asymptotically normal under relatively mild conditions. As such, these nonparametric estimators are similar to but more robust than traditional parametric estimators, particularly in large samples.

Although these estimators have the same asymptotic normal distribution, their performance in moderate samples has been observed to vary in simulations.
For example, cross-fitting, a general technique that is similar to sample splitting and applicable to many nonparametric estimators involving nuisance function estimation, has been empirically found to improve the performance in moderate samples \protect\citep[e.g.,][]{Li2022,Smith2024}.
Currently, the most widely accepted explanation is that cross-fitting removes a Donsker condition \protect\citep{Kosorok2008,VanderVaart2023}, which restricts the flexibility of the nuisance function estimator and is required when not adopting sample-splitting to control an empirical process term.
However, if the Donsker condition is verified, it is theoretically unclear whether cross-fitting improves performance.

Some existing works try to address moderate-sample performance from various other perspectives.
For example, it has been heuristically argued or suggested that plug-in estimators like TMLE are superior to non-plug-in estimators like DML estimators, because TMLE respects known bounds on the estimand \protect\citep[e.g.,][]{Gruber2010,Gruber2014,Levy2018CVTMLE,Tran2019,Rytgaard2021,Kennedy2022,Guo2023}, but \protect\citet{Qiu2024} provided a counterexample.
\protect\citet{Tran2023} and Chapter~28 of \protect\citet{VanderLaan2018} proposed efficient estimation of the asymptotic variance and bootstrap. However, they did not provide theoretical justification for finite-sample improvements, and the improvements in numerical simulations might be debatable.
\protect\citet{Naimi2024,Schader2024} investigated the effect of exogenous randomness in these nonparametric estimators using numerical simulations.
\protect\citet{Robins1997} developed ``curse of dimensionality appropriate (CODA) asymptotic theory'' for semiparametric models, but it is debatable whether their strict exclusion of any smoothness assumption is overly stringent in practice.
\protect\citet{Liu2020,Liu2023} proposed methods to test whether, for the given dataset and nuisance estimators, the nuisance estimators approximately ensure sufficiently small bias for approximate normality of nonparametric estimators and thus approximately valid Wald-confidence interval (CI) coverage.
However, this test focuses on a bias term rather than the CI coverage. 
Moreover, this test is data-dependent and cannot be used to compare estimators under general scenarios.
\protect\citet{Ballinari2024,VanderLaan2024,VanderLaan2024auto} proposed to calibrate nuisance estimators to improve finite-sample performance, and \protect\citet{VanderLaan2024,VanderLaan2024auto} proved weakened sufficient conditions for asymptotic normality due to isotonic calibration.

In this paper, I attempt to compare the moderate-sample performance of AIPW estimators with and without cross-fitting from another perspective on the asymptotic behavior, the convergence of the corresponding Wald-CI coverage to nominal coverage, namely a Berry-Esseen-type bound, using an estimated asymptotic variance.
This may be among the most important theoretical questions regarding these flexible nonparametric procedures following asymptotic normality, because one of the most common use of these nonparametric estimators in practice is the resulting asymptotically valid inference based on the corresponding Wald-CI.
A few existing works provide Berry-Esseen-type bounds for nonparametric estimators.
\protect\citet{Quintas-Martinez2022,Chernozhukov2023} derived Berry-Esseen-type bounds for debiased machine learning. Because the true asymptotic variance was used to standardize the estimator, these results characterize the convergence rate of the estimator's sampling distribution to the normal distribution, but do not directly characterize the CI coverage when the asymptotic variance is estimated.
\protect\citet{Saco2025} characterized the convergence of CI coverage to nominal coverage for debiased machine learning in the special case of partially linear models.
For relatively simpler procedures, Berry-Esseen-type bounds on CI coverage rates (or more precise characterizations of the rates) have been established.
\protect\citet{Hall1992} reviewed these results for the sample mean, some parametric models, and kernel smoothing.
Berry-Eseen-type bounds for U-statistics \protect\citep[e.g.,][]{Callaert2007,Bickel2007}, the sample mean of high-dimensional random vectors \protect\citep[e.g.,][]{Zhilova2020,Zhilova2022}, stationary dependent data \protect\citep[e.g.,][]{Bentkus1997,Jirak2016,Jirak2023}, or design based average treatment effect (ATE) estimation \protect\citep[e.g.,][]{Shi2022}, have also been derived.
However, to the best of my knowledge at the time of writing, no existing literature addresses the convergence rate of CI coverage for a flexible nonparametric ATE estimator with an estimated variance.

I consider a simple but practically relevant example, the estimation of the ATE with an AIPW estimator, with or without cross-fitting, in a randomized controlled trial (RCT).
Generally, the AIPW estimator involves two important nuisance functions, the outcome model and the propensity score.
In contrast to the observational data setting, in the RCT setting, the propensity score is known and need not be estimated or calibrated.
Because of this knowledge, the AIPW estimator behaves fundamentally differently in the RCT setting. It is fully robust against inconsistent estimation of the outcome model and may achieve efficiency gains, making it a potentially desirable estimator in RCTs.
Moreover, there is no issue of the non-existence of root-$n$ consistent estimators when both nuisance functions are unknown in CODA asymptotic theory \protect\citep[Theorem~2 in][]{Robins1997}.
The focus of \protect\citet{Liu2020,Liu2023}, a potentially problematic bias term or its Cauchy-Schwarz upper bound, also equals zero.
Because of the additional substantive complexities in the observational setting, I confine attention to RCTs.

I consider a plug-in estimator of the asymptotic variance when computing a standard error to construct a Wald-CI. This is the most common and widely applicable standard error in the literature.
Efficient asymptotic variance estimators in \protect\citet{Tran2023,VanderLaan2018} only apply to binary outcomes. For general outcomes, these estimators may need additional nuisance estimators, and thus are less computationally convenient and not studied in this paper.
Although this example appears simple and thoroughly studied, I show new and non-trivial findings regarding Wald-CI coverage.

The main contribution of this paper is the following.
\begin{enumerate}
    \item I derive the first non-asymptotic model-agnostic Berry-Esseen-type bounds for AIPW estimators with black-box nuisance estimators in RCTs when the variance is estimated (Theorems~\ref{thm: CV BE bound} and \ref{thm: BE bound} in Section~\ref{sec: additional theory} of the Supplementary Material).
    For better readability, I present the asymptotic versions in Theorems~\ref{thm: asymptotic CV BE bound} and \ref{thm: asymptotic BE bound} in the main text.
    The rate depends on whether cross-fitting is adopted, the convergence behavior of the outcome model estimator, and the complexity of the Donsker class when cross-fitting is not adopted.
    The best-case rate is root-$n$ up to a log factor, which is achieved under sub-Weibull conditions and when the outcome model estimator converges to some function at the root-$n$-rate.
    This rate is comparable to the Berry-Esseen Theorem for the sample mean.
    Although the known propensity score resolves many issues studied in existing literature, my Berry-Esseen-type bounds still suggest potential differences between AIPW estimators with and without cross-fitting in terms of Wald-CI coverage due to different bounds on empirical processes.
    Section~\ref{sec: BE bounds interpretation} contains a discussion of the interpretation of these bounds.
    
    \item I derive non-asymptotic model-agnostic bounds on the difference between (i) the expectation of the plug-in variance estimators, and (ii) an oracle variance of the AIPW estimator based on a finite-sample deterministic approximation to the outcome estimator, with and without cross-fitting (Theorems~\ref{thm: CV expect Var} and \ref{thm: expect Var} in Section~\ref{sec: additional theory} of the Supplementary Material).
    I present asymptotic versions in Theorems~\ref{thm: asymptotic CV expect Var} and \ref{thm: asymptotic expect Var} in the main text.
    Somewhat surprisingly, with a known propensity score and for a reasonably chosen oracle variance, the difference vanishes at a rate that is almost second order, rather than first order, regarding the $L_2$-distance between the outcome model estimator and deterministic approximation.
    That is, the bias of the plug-in variance estimator for the oracle variance vanishes relatively fast in RCTs.
    
    \item I study the bias of the plug-in variance estimators for the oracle variance, with and without cross-fitting, particularly the sign of the bias.
    I show that the plug-in variance estimator with cross-fitting overestimates oracle variance in large samples if the outcome model converges slowly (Part~\ref{prop: asymptotic Var bias 1} of Proposition~\ref{prop: asymptotic Var bias}), contributing to increased CI coverage.
    In Section~\ref{sec: Var bias}, I also provide a heuristic argument showing that plug-in variance estimator without cross-fitting might underestimate the oracle variance, contributing to decreased CI coverage.
    
    \item In Section~\ref{sec: sim}, I conduct a simulation to investigate my theoretical results' implications in moderate samples.
    The simulation results align with some of my theoretical results well, but also highlight some theoretical gaps for future research.
\end{enumerate}

With the above results, I provide new theoretical justification for adopting cross-fitting when using AIPW estimators in RCTs, even if Donsker conditions hold.

\section{Preliminaries} \label{sec: prelim}

Suppose that a prototypical full data point consists of a vector of baseline covariates $X \in \mathcal{X}$, a binary treatment $A \in \{0,1\}$, and real-valued counterfactual outcomes $Y(a)$ corresponding to treatment status $a \in \{0,1\}$ \protect\citep{Neyman1923,Rubin1974}.
A prototypical observed data point consists of $V:=(X,A,Y)$ where $Y=Y(A)$.
Suppose that the data consists of $n$ independent and identically distributed (i.i.d.) copies $(V_i)_{i=1}^n$ of $V$ drawn from a true distribution $P_*$.
I consider an RCT setting where treatment $A$ is randomized according to $\Pr(A=a \mid X=x) = \pi_*(a \mid x)$ for a known propensity score function $\pi_*$.
The most common choice of $\pi_*$ is a constant function.

Let $Q_{*,a}: x \mapsto \expect[Y \mid X=x,A=a]$ be the true outcome model and $\psi_{*,a} := \expect[Y(a)]$ be the mean counterfactual outcome corresponding to treatment $a$, which equals $\expect[Q_{*,a}(X)]$ \protect\citep{Robins1986}.
These estimands are building blocks for common treatment effect estimates, including risk difference $\psi_{*,1}-\psi_{*,0}$, risk ratio $\psi_{*,1}/\psi_{*,0}$ (when $Y \geq 0$), and odds ratio $\{\psi_{*,1}/(1-\psi_{*,1})\}/\{\psi_{*,0}/(1-\psi_{*,0})\}$ (when $Y \in [0,1]$).
In the rest of this paper, I mainly focus on the results for $\psi_{*,a}$ and the additive ATE $\psi_* := \psi_{*,1}-\psi_{*,0}$.

I will adopt empirical process notations \protect\citep{VanderVaart2023}: For any distribution $P$ and $P$-integrable function $f$, let $P f := \int f \intd P$. The empirical distribution is denoted by $P_n$. If the data is split into $K$ (disjoint) folds, I use $I_k$ to denote the index of the data points in fold $k$ so that $\sqcup_{k=1}^K I_k=\{1,\ldots,n\}$, and use $P_{n,k}$ to denote the empirical distribution in fold $k$.
For any probability measure $P$ and $q \geq 1$, let $\| \cdot \|_{q,P}$ denote the $L_q(P)$-norm of a function, that is, $\| f \|_{q,P} := (P |f|^q)^{1/q}$, and let $\| \cdot \|_{q} := \| \cdot \|_{q,P_*}$.

All asymptotics refer to the scenario where the sample size $n \to \infty$ and all other aspects of the data-generating mechanism are fixed. I may use $n \to \infty$ to denote this asymptotic scenario for short.
I adopt big-O and little-o notations in asymptotic scenarios, and use $\Theta$ to denote the same order; that is, for any real sequences $a_n$ and $b_n$, $a_n = \Theta(b_n)$ if $a_n=\bigO(b_n)$ and $b_n=\bigO(a_n)$.
I write $a_n=\Theta_+(b_n)$ if $a_n=\Theta(b_n)$ and $a_n \geq 0$.
If cross-fitting is adopted, the number of folds $K$ may vary with $n$.

An estimator $\hat{\theta}$ of $\theta$ is said to be asymptotically linear with influence function $f$ if $P_* f=0$, $P_* f^2 < \infty$, and $\hat{\theta} = \theta + P_n f + \smallo_p(n^{-1/2})$.
For any $\alpha \in (0,1)$, let $z_\alpha$ denote the $(1-\alpha)$-quantile of standard Gaussian. Let $\phi$ denote the standard Gaussian density.
I use $\const$ to denote an absolute positive constant that may vary line by line.
For a random function $\hat{f}$, I use $\expect_{\hat{f}}$ to denote the expectation over $\hat{f}$ only.
I use $\ind$ to denote the indicator function.

A non-cross-fit AIPW estimator $\tilde{\psi}_a$ \protect\citep{Robins1994,Robins1995} of $\psi_{*,a}$ is constructed as $P_n \transform_a(\tilde{Q}_a)$, where, for any function $Q: \mathcal{X} \to \real$ and $a' \in \{0,1\}$, I define transformation $\transform_{a'}(Q): v=(x,a,y) \mapsto \ind(a=a') \{y-Q(x)\}/\pi_*(a' \mid x) + Q(x)$, and $\tilde{Q}_a$ is a (possibly black-box) flexible estimator of $Q_{*,a}$.
Under somewhat mild conditions below, $n^{1/2} (\tilde{\psi}_a-\psi_{*,a})$ converges to a normal distribution. Usually, a consistent estimator of the asymptotic variance is computed based on the estimated influence function, namely $\tilde{\sigma}_a^2 := P_n \{\transform_a(\tilde{Q}_a)-\tilde{\psi}_a\}^2$, so that $\tilde{\sigma}_a/\sqrt{n}$ is asymptotically valid standard error to construct a Wald-CI. For example, $\tilde{\psi}_a \pm n^{-1/2} z_{\alpha/2} \tilde{\sigma}_a$ is the $(1-\alpha)$-level symmetric two-sided Wald-CI.
Similarly, a non-cross-fit AIPW estimator of the ATE $\psi_*$ is $\tilde{\psi} := \tilde{\psi}_1-\tilde{\psi}_0$, with the correponding asymptotic variance estimator $\tilde{\sigma}^2 := P_n \{\transform_1(\tilde{Q}_1) - \transform_0(\tilde{Q}_0) - \tilde{\psi}\}^2$.

The cross-fit AIPW estimator, also known as the double/debiased machine learning estimator \protect\citep{Chernozhukov2017,Chernozhukov2018}, is constructed as follows. First, split the data into $K$ folds of approximately equal sizes. Then, for each fold $k$ ($k=1,\ldots,K$), estimate $Q_{*,a}$ with a flexible estimator $\hat{Q}_{k,a}$ based on data outside the fold $k$, and compute a sample-split estimator $\hat{\psi}_{k,a} := P_{n,k} \transform_a(\hat{Q}_{k,a})$. Finally, average over all the folds to compute the cross-fit estimator $\hat{\psi}_a := n^{-1} \sum_{k=1}^K |I_k| \hat{\psi}_{k,a}$.
A consistent estimator of the asymptotic variance is also often constructed similarly: $\hat{\sigma}_a^2 := n^{-1} \sum_{k=1}^K |I_k| \hat{\sigma}_{k,a}^2$ where $\hat{\sigma}_{k,a}^2 := P_{n,k} \{\transform_a(\hat{Q}_{k,a})-\hat{\psi}_{k,a}\}^2$.
A Wald-CI can be constructed similarly to the non-cross-fit version.
For the ATE $\psi_*$, a cross-fit estimator and its asymptotic variance estimator can be constructed similarly: $\hat{\psi} := n^{-1} \sum_{k=1}^K |I_k| \hat{\psi}_k$ where $\hat{\psi}_k := \hat{\psi}_{k,1}-\hat{\psi}_{k,0}$, and $\hat{\sigma}^2 := n^{-1} \sum_{k=1}^K |I_k| P_{n,k} \hat{\sigma}_k^2$ where $\hat{\sigma}_k^2 := P_{n,k} \{\transform_1(\hat{Q}_{k,1}) - \transform_0(\hat{Q}_{k,0}) - \hat{\psi}_k\}^2$ .
In most notations throughout this paper, a tilde $\tilde{\phantom{a}}$ corresponds to the non-cross-fit estimator and a hat $\hat{\phantom{a}}$ corresponds to the cross-fit estimator.

It is well known that, when the propensity score $\pi_*$ is known, $\tilde{\psi}_a$ and $\hat{\psi}_a$ have the following more robust asymptotic properties than when $\pi_*$ is unknown \protect\citep[e.g.,][]{Moore2009,Rosenblum2009,VanderLaan2018}. For any function $Q: \mathcal{X} \to \real$, any scalar $\psi$, and any $a' \in \{0,1\}$, let $D_{a'}(Q,\psi): v \mapsto \transform_{a'}(Q)(v) - \psi$. The following asymptotic properties hold.
\begin{itemize}
    \item If $\tilde{Q}_a$ ($\hat{Q}_{k,a}$ resp.) converges in probability to some fixed function $Q_{\infty,a}$ in the $L_2(P_*)$ sense, $P_* D_a(Q_{\infty,a},\psi_{*,a})^2<\infty$, and $D_a(\tilde{Q}_a,\tilde{\psi}_a) - D_a(Q_{\infty,a},\psi_{*,a})$ lies in a fixed $P_*$-Donsker class with probability tending to one for the non-cross-fit estimator $\tilde{\psi}_a$, then $\tilde{\psi}_a$ ($\hat{\psi}_a$ resp.) is a regular and asymptotically linear estimator of $\psi_{*,a}$ with influence function $D_a(Q_{\infty,a},\psi_{*,a})$, and hence is asymptotically normal.
    \item Moreover, if $Q_{\infty,a} = Q_{*,a}$, then $\tilde{\psi}_a$ ($\hat{\psi}_a$ resp.) is asymptotically efficient.
\end{itemize}

Motivated by such robustness against arbitrary bias in $\tilde{Q}_a$ and $\hat{Q}_{k,a}$, for each $a \in \{0,1\}$, I consider a fixed function $Q_{\#,a}: \mathcal{X} \to \real$ in $L_2(P_*)$ that may depend on the sample size $n$ and may differ from $Q_{*,a}$ and $Q_{\infty,a}$.
Since I analyze estimators with and without cross-fitting separately, $Q_{\#,a}$ may differ in the two cases.
The theoretical results below hold for any fixed $Q_{\#,a}$ sufficiently close to $\tilde{Q}_a$ and $\hat{Q}_{k,a}$, but the bounds are the tightest when $Q_{\#,a}$ is a good non-asymptotic approximation to $\tilde{Q}_a$ or $\hat{Q}_{k,a}$.
In particular, $Q_{\#,a}$ may be taken as $x \mapsto \expect[\tilde{Q}_a(x)]$ for the non-cross-fit estimator, and $x \mapsto \expect[\hat{Q}_{k,a}(x)]$ for the cross-fit estimator if $\hat{Q}_{k,a}$ ($k=1,\ldots,K$) has a common mean.
This setup enables a sample-size-dependent deterministic approximation to the outcome model estimator, highlighting the non-asymptotic nature of the bounds below.
As shown in the simulation in Section~\ref{sec: sim}, a sample-size-dependent approximation can be much more accurate than the asymptotic approximation.
I assume that $\expect \| \tilde{Q}_a - Q_{\#,a} \|_{2}^2$ or $\expect \| \hat{Q}_{k,a} - Q_{\#,a} \|_{2}^2$ ($k=1,\ldots,K$), depending on the estimator being considered, is non-zero, namely the outcome model estimator is non-degenerate.
Otherwise, the AIPW estimators reduce to trivial sample means of i.i.d. random variables, and results on Wald-CI coverage have been established with Edgeworth expansion \protect\citep{Hall1992,Bhattacharya2007}.
In the asymptotic scenario $n \to \infty$, I assume that $\expect \| \tilde{Q}_a - Q_{\#,a} \|_{2}^2$ or $\expect \| \hat{Q}_{k,a} - Q_{\#,a} \|_{2}^2$ ($k=1,\ldots,K$) converges to zero.

Let $\sigma_{\#,a}^2 := P_* D_a(Q_{\#,a},\psi_{*,a})^2$ denote the finite-sample oracle approximation to the scaled variance of the AIPW estimator based on $Q_{\#,a}$, and $\sigma_{\dagger,a}^2$ denote the expectation of asymptotic variance estimators, depending on whether cross-fitting is adopted:
$$\sigma_{\dagger,a}^2 := \begin{cases}
    \expect[\tilde{\sigma}_a^2] & \text{for non-cross-fit AIPW estimator} \\
    \expect[\hat{\sigma}_a^2] & \text{for cross-fit AIPW estimator}
\end{cases}$$
The finite-sample oracle approximation $\sigma_\#^2$ and the expectation $\sigma_\dagger^2$ for the asymptotic variance estimators $\tilde{\sigma}^2$ or $\hat{\sigma}^2$ when estimating ATE are defined similarly.
Since each AIPW estimator is studied separately, this definition should lead to no confusion.
Throughout this paper, I assume the following mild moment and boundedness conditions.

\begin{condition}[Nonzero variance] \label{cond: 2nd moment}
    $\sigma_{*,a}^2 := P_* D_a(Q_{*,a},\psi_{*,a})^2 > 0$.
\end{condition}

\begin{condition}[Finite third moment] \label{cond: 3rd moment}
    $\rho_{\#,a} := P_* |D_a(Q_{\#,a},\psi_{*,a})|^3 < \infty$.
\end{condition}

\begin{condition}[Treatment positivity] \label{cond: positivity}
    There exists a constant $\tau_\pi \in (0,0.5]$ such that $\pi_* \in [\tau_\pi,1-\tau_\pi]$.
\end{condition}

\begin{condition}[Bounded outcome model] \label{cond: bounded Q}
    There exists a constant $M \in (0,\infty)$ such that $Q_{*,a}$ and $Q_{\#,a}$ are bounded by $M$, and, with probability one, $\tilde{Q}_a$ and $\hat{Q}_{k,a}$ are bounded by $M$ for all $k \in \{1,\ldots,K\}$ and $a \in \{0,1\}$.
\end{condition}

\begin{condition}[Higher moments] \label{cond: higher moments}
    $\varsigma_{\#,a}^2 :=P_* \{\transform_a(Q_{\#,a})^2 - P_* \transform_a(Q_{\#,a})^2\}^2 > 0$. Moreover, there exist constants $0 < \underline{m} \leq \bar{m} < \infty$ such that, with probability one, $P_* | \transform_a(\hat{Q}_{k,a})^2 - P_* \transform_a(\hat{Q}_{k,a})^2 |^3 \leq \bar{m}$ and $P_* \{\transform_a(\hat{Q}_{k,a})^2 - P_* \transform_a(\hat{Q}_{k,a})^2\}^2 \in [\underline{m}, \bar{m}]$.
\end{condition}

The positivity in Conditions~\ref{cond: 2nd moment} and \ref{cond: higher moments} holds if $\Pr(\Var(Y \mid X,A=a)>0)>0$.
Under Condition~\ref{cond: higher moments}, with probability one, $P_* D_a(\tilde{Q}_a,\psi_{*,a})^2$ and $P_* D_a(\hat{Q}_{k,a},\psi_{*,a})^2$ ($k=1,\ldots,K$) are bounded, say by $\bar{\sigma}_a^2$; $P_* |D_a(\tilde{Q}_a,\psi_{*,a})|^3$ and $P_* |D_a(\hat{Q}_{k,a},\psi_{*,a})|^3$ ($k=1,\ldots,K$) are bounded, say by $\bar{\rho}_a$.
The boundedness in Conditions~\ref{cond: 3rd moment}, \ref{cond: bounded Q} and \ref{cond: higher moments} holds (or is anticipated to hold) if the outcome $Y$ is bounded.
Condition~\ref{cond: positivity} holds in almost all RCTs.

\section{Berry-Esseen-type bounds} \label{sec: BE bounds}

\subsection{Cross-fit procedure} \label{sec: CV BE bounds}

To simplify the presentation of the results in the main text, I consider the following assumption on sample splitting, whose second part holds if all outcome model estimators $\hat{Q}_{k,a}$ ($k=1,\ldots,K$) are based on the same estimation algorithm in addition to the first part.
\begin{condition}[Exchangeable sample splitting] \label{cond: CV exchangeable}
    All folds have equal sizes $|I_k|=n/K$ and all outcome model estimators $\hat{Q}_{k,a}$ ($k=1,\ldots,K$) are identically distributed.
\end{condition}

The next approximate sub-Weibull conditions are unnecessary for obtaining a Berry-Esseen-type bound, but it may be reasonable and leads to tighter bounds.

\begin{condition}[Approximate sub-Weibull tail of $\hat{Q}_{k,a}$] \label{cond: CV tail}
    For all $k \in \{1,\ldots,K\}$ and $a \in \{0,1\}$, there exist constants $\hat{a}_1,\hat{b}_1,\hat{d}_1 \geq 0, \hat{c}_1, \hat{q}_1 > 0$ and a deterministic positive function $\hat{r}_1$ such that, for all $t>0$,
    $$\Pr \left( \frac{\| \hat{Q}_{k,a} - Q_{\#,a} \|_{2}}{\hat{r}_1(n)} > t \right) \leq \hat{a}_1 n^{\hat{d}_1} \exp \left( -\hat{c}_1 t^{\hat{q}_1} \right) + \hat{b}_1 (\log n / n)^{1/2}.$$
\end{condition}

Condition~\ref{cond: CV tail} requires that the $L_2(P_*)$-distance between $\hat{Q}_{k,a}$ and $Q_{\#,a}$, normalized by a rate $\hat{r}_1(n)$, is roughly sub-Weibull, with a possible multiplicative factor that grows at a polynomial rate at most and a possible relaxation term that vanishes faster than $n^{-1/2}$.
The rate $\hat{r}_1(n)$ can be (an upper bound of) the convergence rate of $\| \hat{Q}_{k,a} - Q_{\#,a} \|_2$.
The $(\log n / n)^{1/2}$ term allows for any rare event whose probability decays with $n$ rapidly, for example, the event that all individuals are in the other treatment arm so that there is no data to obtain $\hat{Q}_{k,a}$.
If one wishes to control the tail of $\| \hat{Q}_{k,a} - Q_{\#,a} \|_{2}$ via asymptotic normality, this $(\log n / n)^{1/2}$ term can also handle normal approximation errors to be controlled by a Berry-Esseen-type bound.
I illustrate a concrete example of an outcome model estimator satisfying this sub-Weibull tail bound, series regression, in Section~\ref{sec: light tail example} of the Supplementary Material.
For empirical risk minimizers, such approximate sub-Weibull tail bounds might be obtained via empirical process tail bounds \protect\citep[e.g., Theorems~2.14.36, 2.14.37, 2.15.5, 2.15.11 in][]{VanderVaart2023}.

\begin{condition}[Approximate sub-Weibull tail of $P_* \transform_a(\hat{Q}_{k,a})^2$] \label{cond: more CV tail}
    There exist constants $\hat{a}_2,\hat{b}_2,\hat{d}_2 \geq 0, \hat{c}_2, \hat{q}_2 > 0$ and a deterministic positive function $\hat{r}_2$ such that, for all $k \in \{1,\ldots,K\}$, $a \in \{0,1\}$, and $t>0$,
    $$\Pr \left( \frac{| P_* \transform_a(\hat{Q}_{k,a})^2 - \expect[P_* \transform_a(\hat{Q}_{k,a})^2] |}{\hat{r}_2(n)} > t \right) \leq \hat{a}_2 n^{\hat{d}_2} \exp \left( -\hat{c}_2 t^{\hat{q}_2} \right) + \hat{b}_2 (\log n / n)^{1/2}.$$
\end{condition}

As shown in Lemma~\ref{lemma: transform sqaure} in the Supplementary Material, because of the integration with respect to $P_*$ and the boundedness of $\hat{Q}_{k,a}$ in Condition~\ref{cond: bounded Q}, the random variable in the numerator has a tail similar to the $L_2(P_*)$-distance between $\hat{Q}_{k,a}$ and its mean.
Thus, if $Q_{\#,a}$ is the mean of $\hat{Q}_{k,a}$ and Condition~\ref{cond: CV tail} holds, Condition~\ref{cond: more CV tail} may hold with $\hat{r}_2=\hat{r}_1$.

For readability, I next present an asymptotic Berry-Esseen-type bound for the cross-fit Wald-CI in the scenario $n \to \infty$, with $P_*$ and all constants in Conditions~\ref{cond: 2nd moment}--\ref{cond: higher moments}, \ref{cond: CV tail}, and \ref{cond: more CV tail} fixed.
A more intricate non-asymptotic version is presented in Theorem~\ref{thm: CV BE bound} in Section~\ref{sec: additional theory} of the Supplementary Material.

\begin{theorem}[Asymptotic Berry-Esseen-type bound for cross-fit AIPW Wald-CI] \label{thm: asymptotic CV BE bound}
    For the mean counterfactual outcome $\psi_{*,a}$, under Conditions~\ref{cond: 2nd moment}--\ref{cond: CV exchangeable}, as $n \to \infty$,
    \begin{align}
        &\left| \Pr(n^{1/2} (\hat{\psi}_a - \psi_{*,a}) \leq z_\alpha \hat{\sigma}_a) - (1-\alpha) - \phi(z_\alpha) z_\alpha \frac{\sigma_{\dagger,a}-\sigma_{\#,a}}{\sigma_{\#,a}} \right| \nonumber \\
        &= \bigO \left( K^{2/3} \left\{ \expect \|\hat{Q}_{k,a} - Q_{\#,a} \|_2^2 \right\}^{1/3} + \left\{ \frac{K \log n}{n} \right\}^{1/2} \right). \label{eq: asymptotic CV BE bound}
    \end{align}
    Additionally under approximate sub-Weibull conditions \ref{cond: CV tail} and \ref{cond: more CV tail}, \eqref{eq: asymptotic CV BE bound} holds with its right-hand side replaced by
    \begin{equation}
        \bigO \left( K^{1/2} \hat{r}_1(n) (\log n)^{(\hat{q}_1+2)/(2 \hat{q}_1)} + \hat{r}_2(n) (\log n)^{1/\hat{q}_2} + K \{\log n/n\}^{1/2} \right). \nonumber \label{eq: improved asymptotic CV BE bound}
    \end{equation}
    For the ATE $\psi_*$, the same statements hold with subscript $a$ indicating the treatment arm dropped from the left-hand side of \eqref{eq: asymptotic CV BE bound} and $\{ \expect \|\hat{Q}_{k,a} - Q_{\#,a} \|_2^2 \}^{1/3}$ replaced by $\sum_{a=0}^1 \{ \expect \|\hat{Q}_{k,a} - Q_{\#,a} \|_2^2 \}^{1/3}$.
\end{theorem}

The bound for a $(1-\alpha)$-level two-sided Wald-CI can be derived by noting that $\Pr(-z_{\alpha/2} \hat{\sigma}_a \leq n^{1/2} (\hat{\psi}_a - \psi_{*,a}) \leq z_{\alpha/2} \hat{\sigma}_a) = \Pr(n^{1/2} (\hat{\psi}_a - \psi_{*,a}) \leq z_{\alpha/2} \hat{\sigma}_a) - \Pr(n^{1/2} (\hat{\psi}_a - \psi_{*,a}) < -z_{\alpha/2} \hat{\sigma}_a)$.
The proof of this theorem can be found in Section~\ref{sec: CV proof} of the Supplementary Material.
It is based on a more general version of the delta method for CI coverage \protect\citep[Section~2.7 in][]{Hall1992}, a Taylor's expansion to analyze the effect of the bias $\sigma_{\dagger,a}-\sigma_{\#,a}$ of the variance estimator, non-asymptotic bounds on the deviation of $n^{1/2} (\hat{\psi}_a-\psi_{*,a})$ from its linear approximation $n^{1/2} (P_n - P_*) D_a(Q_{\#,a},\psi_{*,a})$, the Berry-Esseen Theorem to bound the deviation of the linear approximation $n^{1/2} (P_n - P_*) D_a(Q_{\#,a},\psi_{*,a})$ from the asymptotic normal distribution, and a non-asymptotic tail bound of $\hat{\sigma}_a - \sigma_{\dagger,a}$.

Theorem~\ref{thm: asymptotic CV BE bound} is not very useful without knowing the convergence rate of $\sigma_{\dagger,a} - \sigma_{\#,a}$.
It is straightforward to show that the difference is upper-bounded by the convergence rate of $\|\hat{Q}_{k,a} - Q_{\#,a} \|_2$.
However, in the following Theorem~\ref{thm: asymptotic CV expect Var}, I show a somewhat surprising result that, under the following fairly mild condition on the choice of $Q_{\#,a}$, $|\sigma_{\dagger,a} - \sigma_{\#,a}|$ is roughly upper-bounded by the square of this rate.
\begin{condition}[Consistency or common mean for $\hat{Q}_{k,a}$] \label{cond: CV common mean Q or consistent Q}
    One of the following conditions holds:
    \begin{enumerate}
        \item $Q_{\#,a}=Q_{*,a}$; \label{cond: CV common mean Q}
        \item For all $k = 1,\ldots,K$, $Q_{\#,a}(X) = \expect_{\hat{Q}_{k,a}}[\hat{Q}_{k,a}(X)]$ $P_*$-almost surely. \label{cond: CV consistent Q}
    \end{enumerate}
\end{condition}
Part~\ref{cond: CV common mean Q} of Condition~\ref{cond: CV common mean Q or consistent Q} holds if $\hat{Q}_{k,a}$ is consistent for the true outcome model $Q_{*,a}$.
Part~\ref{cond: CV consistent Q} of Condition~\ref{cond: CV common mean Q or consistent Q} holds if Condition~\ref{cond: CV exchangeable} holds and $Q_{\#,a}$ is taken to be the common mean of all outcome model estimators $\hat{Q}_{k,a}$, regardless of whether the outcome model estimator $\hat{Q}_{k,a}$ converges to the truth $Q_{*,a}$.

\begin{theorem}[Convergence rates of $|\sigma_{\dagger,a} - \sigma_{\#,a}|$ and $|\sigma_{\dagger} - \sigma_{\#}|$ with cross-fitting] \label{thm: asymptotic CV expect Var}
    Under Conditions~\ref{cond: 2nd moment} and \ref{cond: positivity},
    as $n \to \infty$, $|\sigma_{\dagger,a} - \sigma_{\#,a}| = \bigO( \{\expect \|\hat{Q}_{k,a} - Q_{\#,a} \|_2^2 \}^{1/2} + n^{-1})$ and $|\sigma_{\dagger} - \sigma_{\#}| = \bigO( \{\expect \|\hat{Q}_{k,1} - Q_{\#,1} \|_2^2 \}^{1/2} + \{\expect \|\hat{Q}_{k,0} - Q_{\#,0} \|_2^2 \}^{1/2} + n^{-1})$.
    Additionally under Condition~\ref{cond: CV common mean Q or consistent Q}, $|\sigma_{\dagger,a} - \sigma_{\#,a}| = \bigO(\expect \|\hat{Q}_{k,a} - Q_{\#,a} \|_2^2 + n^{-1})$ and $|\sigma_{\dagger} - \sigma_{\#}| = \bigO(\expect \|\hat{Q}_{k,1} - Q_{\#,1} \|_2^2 + \expect \|\hat{Q}_{k,0} - Q_{\#,0} \|_2^2 + n^{-1})$.
\end{theorem}

A non-asymptotic version is presented in Theorem~\ref{thm: CV expect Var} in Section~\ref{sec: additional theory} of the Supplementary Material.
The term involving $\expect \|\hat{Q}_{k,a} - Q_{\#,a} \|_2^2$ arises from explicitly calculating or bounding the variance of $\transform_a(\hat{Q}_{k,a})$, which contributes to $\sigma_{\dagger,a}^2-\sigma_{\#,a}^2$.
Under Condition~\ref{cond: CV common mean Q or consistent Q}, the rate is faster because, when the propensity score $\pi_*$ is known, $Q_{\#,a}$ is a projection of $\hat{Q}_{k,a}$ or $Y$, so that the linear contribution of $\hat{Q}_{k,a}-Q_{\#,a}$ to $\sigma_{\dagger,a}^2-\sigma_{\#,a}^2$ vanishes, and thus $\sigma_{\dagger,a}^2-\sigma_{\#,a}^2$ is almost quadratic.
A formal proof can be found in Section~\ref{sec: CV proof} of the Supplementary Material.
This theorem implies that, regardless of whether Condition~\ref{cond: CV common mean Q or consistent Q} holds, the term involving $\sigma_{\dagger,a} - \sigma_{\#,a}$ in \eqref{eq: asymptotic CV BE bound} is not a leading term in the Berry-Esseen-type bound in Theorem~\ref{thm: asymptotic CV BE bound}.
I defer further interpretation of Theorem~\ref{thm: asymptotic CV BE bound} to Section~\ref{sec: BE bounds interpretation}.

\subsection{Non-cross-fit procedure} \label{sec: non CV BE bounds}

A Donsker condition is needed for $\tilde{\psi}_a$ to be asymptotically normal, but slightly stronger conditions appear necessary to obtain more insights like convergence rates into CI coverage than a mere convergence of CI coverage to the nominal coverage.
For concreteness and illustration, I consider the following sufficient condition of a Donsker condition based on covering number and uniform entropy integral of a uniformly bounded function class \protect\citep{VanderVaart2023}.
For any function class $\gfunclass$, let $N(\epsilon,\gfunclass,\| \cdot \|)$ denote the $\epsilon$-covering number of $\gfunclass$ with respect to a pseudo-norm $\| \cdot \|$ (the minimal number of balls with radius $\epsilon$ that altogether cover $\gfunclass$), and
$$J(\delta,\gfunclass,G) := \int_0^\delta \sup_\mu \left\{1+\log N \big(\epsilon \| G \|_{2,\mu} ,\gfunclass,L_2(\mu) \big) \right\}^{1/2} \intd \epsilon$$
denote its uniform entropy integral, where $G$ is an envelope function of $\gfunclass$ (i.e., $|g| \leq G$ pointwise for all $g \in \gfunclass$) and the supremum is taken over all finitely discrete probability measures $\mu$.

\begin{condition}[Sufficient conditions for Donsker condition] \label{cond: donsker}
    The outcome $Y$ is bounded by the constant $M$ introduced in Condition~\ref{cond: bounded Q}.
    There exists a uniformly bounded function class $\funclass$ (i.e., with a constant envelope $M$) that consists of some real-valued functions defined on $\mathcal{X}$ and may depend on the treatment $a \in \{0,1\}$ under consideration, such that $\tilde{Q}_a \in \funclass$ with probability one and $J(1,\funclass,M) < \infty$.
\end{condition}

In the asymptotic scenario, I assume that $\funclass$ is fixed and does not depend on $n$.
I also consider the following approximate sub-Weibull conditions similar to Conditions~\ref{cond: CV tail} and \ref{cond: more CV tail}.

\begin{condition}[Approximate sub-Weibull tail of $\tilde{Q}_a$] \label{cond: tail}
    There exist constants $\tilde{a}_1,\tilde{b}_1,\tilde{d}_1 \geq 0, \tilde{c}_1, \tilde{q}_1 > 0$ and a deterministic positive function $\tilde{r}_1$ such that, for all $t>0$,
    $$\Pr \left( \frac{\| \tilde{Q}_a - Q_{\#,a} \|_{2}}{\tilde{r}_1(n)} > t \right) \leq \tilde{a}_1 n^{\tilde{d}_1} \exp \left( -\tilde{c}_1 t^{\tilde{q}_1} \right) + \tilde{b}_1 n^{-1/2} \log n.$$
\end{condition}

\begin{condition}[Approximate sub-Weibull tail of $P_* \transform_a(\tilde{Q}_a)^2$] \label{cond: more tail}
    There exist constants $\tilde{a}_2,\tilde{b}_2,\tilde{d}_2 \geq 0, \tilde{c}_2, \tilde{q}_2 > 0$ and a deterministic positive function $\tilde{r}_2$ such that, for all $a \in \{0,1\}$ and $t>0$,
    $$\Pr \left( \frac{| P_* \transform_a(\tilde{Q}_a)^2 - \expect[P_* \transform_a(\tilde{Q}_a)^2] |}{\tilde{r}_2(n)} > t \right) \leq \tilde{a}_2 n^{\tilde{d}_2} \exp \left( -\tilde{c}_2 t^{\tilde{q}_2} \right) + \tilde{b}_2 n^{-1/2} \log n.$$
\end{condition}

For readability, similarly to Theorem~\ref{thm: asymptotic CV BE bound}, I next present an asymptotic Berry-Esseen-type bound for non-cross-fit Wald-CI under the same asymptotic scenario.
A non-asymptotic version is presented in Theorem~\ref{thm: BE bound} in Section~\ref{sec: additional theory} of the Supplementary Material.

\begin{theorem}[Asymptotic Berry-Esseen-type bound for non-cross-fit AIPW Wald-CI] \label{thm: asymptotic BE bound}
    For the mean counterfactual outcome $\psi_{*,a}$, under Conditions~\ref{cond: 2nd moment}--\ref{cond: higher moments} and \ref{cond: donsker}, for any $\delta > 0$ that may depend on $n$,
    \begin{align}
        &\left| \Pr(n^{1/2} (\tilde{\psi}_a - \psi_{*,a}) \leq z_\alpha \tilde{\sigma}_a) - (1-\alpha) - \phi(z_\alpha) z_\alpha \frac{\sigma_{\dagger,a}-\sigma_{\#,a}}{\sigma_{\#,a}} \right| \nonumber \\
        &= \bigO \left( \tilde{\Delta}(\delta) + J(2 \delta,\funclass,M) + \frac{J^2(2 \delta,\funclass,M)}{\delta^2 n^{1/2}} + (\delta + n^{-1/2}) \log n \right), \label{eq: asymptotic BE bound}
    \end{align}
    where $\tilde{\Delta}(\delta) := \expect \| \tilde{Q}_a-Q_{\#,a} \|_2^2/\delta^2 + (\expect \| \tilde{Q}_a-Q_{\#,a} \|_2^2)^{1/3}$.
    Additionally under approximate sub-Weibull conditions \ref{cond: tail} and \ref{cond: more tail}, \eqref{eq: asymptotic BE bound} holds with $\tilde{\Delta}(\delta)$ replaced by
    $$\tilde{\Delta}'(\delta) := n^{\tilde{d}_1} \exp \left\{ - \tilde{c}_1 \left( \frac{2 \delta M}{\tilde{r}_1(n)} \right)^{\tilde{q}_1} \right\} + \tilde{r}_2(n) (\log n)^{1/\tilde{q}_2}.$$
    For the ATE $\psi_*$, the same statements hold with subscript $a$ indicating the treatment arm dropped from the left-hand side of \eqref{eq: asymptotic BE bound} and $\tilde{\Delta}(\delta)$ replaced by $\sum_{a=0}^1 \{ \expect \| \tilde{Q}_a-Q_{\#,a} \|_2^2/\delta^2 + (\expect \| \tilde{Q}_a-Q_{\#,a} \|_2^2)^{1/3} \}$.
\end{theorem}

The rates in Theorem~\ref{thm: asymptotic BE bound} might not be intuitive, so I will show the rates for VC-subgraph and VC-hull classes in Corollary~\ref{coro: VC & hull rates} later in this section.
Since $J(2\delta, \funclass, M) \geq 2 \delta$, the rate's upper bound in the right-hand side of \eqref{eq: asymptotic BE bound} is lower bounded by $\bigO( \{\expect \| \tilde{Q}_a-Q_{\#,a} \|_2^2\}^{1/3} + n^{-1/2})$ (up to log factors) without approximate sub-Weibull conditions, and lower bounded by $\bigO(\{\expect \| \tilde{Q}_a - Q_{\#,a} \|_{2}^2\}^{1/2} + n^{-1/2})$ (up to log factors) with approximate sub-Weibull conditions, which are comparable to Theorem~\ref{thm: asymptotic CV BE bound} with fixed $K$.
Theorem~\ref{thm: BE bound} can be shown with an argument similar to Theorem~\ref{thm: CV BE bound}, except that the additional empirical process terms are bounded by Theorems~5.1 and 5.2 in \protect\citet{Chernozhukov2014}.

I next show a similar result on $|\sigma_{\dagger,a}-\sigma_{\#,a}|$ to Theorem~\ref{thm: asymptotic CV expect Var}, with a non-asymptotic version, Theorem~\ref{thm: expect Var} in Section~\ref{sec: additional theory} of the Supplementary Material.
I consider a similar condition to Condition~\ref{cond: CV common mean Q or consistent Q}.

\begin{condition}[Consistency of $\tilde{Q}_a$ or suitable $Q_{\#,a}$] \label{cond: mean Q or consistent Q}
    One of the following conditions holds:
    \begin{enumerate}
        \item $Q_{\#,a}=Q_{*,a}$;
        \item $Q_{\#,a}(X) = \expect_{\tilde{Q}_a}[\tilde{Q}_a(X)]$ $P_*$-almost surely.
    \end{enumerate}
\end{condition}
Part~1 of Condition~\ref{cond: mean Q or consistent Q} holds if $\tilde{Q}_a$ is consistent for $Q_{*,a}$, similarly to Part~1 of Condition~\ref{cond: CV common mean Q or consistent Q}.
Part~2 of Condition~\ref{cond: mean Q or consistent Q} is even weaker than Part~2 of Condition~\ref{cond: CV common mean Q or consistent Q} as it always holds for an appropriately chosen $Q_{\#,a}$.

\begin{theorem}[Convergence rates of $|\sigma_{\dagger,a} - \sigma_{\#,a}|$ and $|\sigma_{\dagger} - \sigma_{\#}|$ without cross-fitting] \label{thm: asymptotic expect Var}
    Under Conditions~\ref{cond: 2nd moment}, \ref{cond: positivity}, \ref{cond: bounded Q} and \ref{cond: donsker}, as $n \to \infty$, $|\sigma_{\dagger,a} - \sigma_{\#,a}| = \bigO(\{\expect \|\tilde{Q}_a - Q_{\#,a} \|_2^2\}^{1/2} + n^{-1/2})$ and $|\sigma_{\dagger} - \sigma_{\#}| = \bigO(\{\expect \|\tilde{Q}_1 - Q_{\#,1} \|_2^2\}^{1/2} + \{\expect \|\tilde{Q}_0 - Q_{\#,0} \|_2^2\}^{1/2} + n^{-1/2})$. Additionally under Condition~\ref{cond: mean Q or consistent Q}, $|\sigma_{\dagger,a} - \sigma_{\#,a}| = \bigO(\expect \|\tilde{Q}_a - Q_{\#,a} \|_2^2 + n^{-1/2})$ and $|\sigma_{\dagger} - \sigma_{\#}| = \bigO(\expect \|\tilde{Q}_1 - Q_{\#,1} \|_2^2 + \expect \|\tilde{Q}_0 - Q_{\#,0} \|_2^2 + n^{-1/2})$.
\end{theorem}

It is possible to show that the $\bigO(n^{-1/2})$ term is $\smallo(n^{-1/2})$, with the exact rate depending on the covering number of $\funclass$.
This result shows that the term involving $\sigma_{\dagger,a} - \sigma_{\#,a}$ in \eqref{eq: asymptotic BE bound} does not dominate in the Berry-Esseen-type bound in Theorem~\ref{thm: asymptotic BE bound}.
If one is willing to ignore log factors when interpreting rates and $\expect \|\tilde{Q}_a - Q_{\#,a} \|_2^2 = \bigO(n^{-1})$ (e.g., $\tilde{Q}_a$ is estimated in a parametric model), then $\sigma_{\dagger,a} - \sigma_{\#,a} = \bigO(n^{-1/2})$ is at most comparable to $(\log n / n)^{1/2}$ in \eqref{eq: asymptotic BE bound}.

In many cases, the bounded uniform entropy integral condition $J(1,\funclass,M) < \infty$ in Condition~\ref{cond: donsker} is verified by establishing that $\funclass$ is subset of a VC or VC-hull class. I adopt the definition of VC index in \protect\citet{VanderVaart2023} and consider the following VC or VC-hull condition.

\begin{condition}[VC or VC-hull] \label{cond: donsker2}
    \leavevmode
    \begin{enumerate}
        \item $\funclass$ is a subset of a VC class with VC index $\nu$. \label{cond: VC}
        \item $\funclass$ is a subset of a VC-hull (the symmetric convex hull of a VC class) whose associated VC class has VC index $\nu$. \label{cond: VC hull}
    \end{enumerate}
\end{condition}
Under Part~\ref{cond: VC} of Condition~\ref{cond: donsker2}, $\sup_\mu \log N(\epsilon M, \funclass, L_2(\mu)) \leq C_1(\nu) + 2 \nu \log (1/\epsilon)$ for all $\epsilon \in (0,1)$, where $C_1(\nu)$ is a constant depending on $\nu$ only \protect\citep[Theorem~2.6.7 in][]{VanderVaart2023}.
Under Part~\ref{cond: VC hull} of Condition~\ref{cond: donsker2}, $\sup_\mu \log N(\epsilon M, \funclass, L_2(\mu)) \leq C_2(\nu) \epsilon^{-2\nu/(2\nu+1)}$, where $C_2(\nu)$ is another constant depending on $\nu$ only \protect\citep[Corollary~2.6.12 in][]{VanderVaart2023}.
These lead to the following result on the rates for \eqref{eq: asymptotic BE bound}, depending on which part of Condition~\ref{cond: donsker2} holds, and whether Conditions~\ref{cond: tail} and \ref{cond: more tail} hold.
I will compare and interpret the rates in Section~\ref{sec: BE bounds interpretation}.

\begin{corollary}[Convergence rates of non-cross-fit AIPW Wald-CI coverage for VC classes and VC-hulls] \label{coro: VC & hull rates}
    Suppose that Conditions~\ref{cond: 2nd moment}--\ref{cond: higher moments} and \ref{cond: donsker} hold. Consider the asymptotic scenario $n \to \infty$.
    \begin{enumerate}
        \item Under Part~\ref{cond: VC} of Condition~\ref{cond: donsker2}, the left-hand side of \eqref{eq: asymptotic BE bound} is
        $$\bigO \left( \Delta_{\VC,1} \{\log (1/\Delta_{\VC,1})\}^{1/2} + n^{-1/2} \log (1/\Delta_{\VC,1}) + \left\{ (\Delta_{\VC,1} + n^{-1/2} \right\} \log n + \Delta_{\VC,2} \right),$$
        where $\Delta_{\VC,1}=(\expect \| \tilde{Q}_a - Q_{\#,a} \|_{2}^2)^{1/3}$ and $\Delta_{\VC,2}=0$. Additionally under Conditions~\ref{cond: tail} and \ref{cond: more tail}, if $\tilde{r}_1(n) = \smallo(\{\log n\}^{-1/\tilde{q}_1})$, the rate holds with $\Delta_{\VC,1}$ and $\Delta_{\VC,2}$ replaced by $\tilde{r}_1(n) (\log n)^{1/\tilde{q}_1}$ and $\tilde{r}_2(n) (\log n)^{1/\tilde{q}_2}$, respectively.
        
        \item Under Part~\ref{cond: VC hull} of Condition~\ref{cond: donsker2}, the left-hand side of \eqref{eq: asymptotic BE bound} is
        $$\bigO \left( \Delta_{\VChull,1} + \Delta_{\VChull,2} + \left\{ \Delta_{\VChull,1}^{(2\nu+1)/(\nu+1)} + n^{-1/2} \right\} \log n \right),$$
        where $\Delta_{\VChull,1}=(\expect \| \tilde{Q}_a - Q_{\#,a} \|_{2}^2)^{(\nu+1)/(5 \nu+3)} + n^{-(\nu+1)/(6 \nu +2)}$ and $\Delta_{\VChull,2} = 0$.
        
        Additionally under Conditions~\ref{cond: tail} and \ref{cond: more tail}, if $\expect \| \tilde{Q}_a - Q_{\#,a} \|_{2}^2 = \smallo(\{\log n\}^{-1/\tilde{q}_1})$, this rate holds with $\Delta_{\VChull,1}$ and $\Delta_{\VChull,2}$ replaced by $\{\tilde{r}_1(n)\}^{(\nu+1)/(2\nu+1)} (\log n)^{(\nu+1)/\{\tilde{q}_1 (2\nu+1)\}}$ and $n^{-1/2} \{\tilde{r}_1(n)\}^{-2\nu/(2\nu+1)} \allowbreak (\log n)^{-2\nu/\{\tilde{q}_1 (2\nu+1)\}} + \tilde{r}_2(n) (\log n)^{1/\tilde{q}_2}$, respectively.
    \end{enumerate}
\end{corollary}

\subsection{Interpretation and comparison of asymptotic rates} \label{sec: BE bounds interpretation}

First of all, it is worth noting that I only obtain upper bounds on the rates in Theorems~\ref{thm: asymptotic CV BE bound}--\ref{thm: asymptotic expect Var}, rather than the rates themselves.
In other words, two terms whose rates have different upper bounds may actually have the same rate.
I disregard the nuance between the rate and its upper bound for the rest of this section.

For simplicity, particularly when comparing cross-fit and non-cross-fit procedures, I treat $K$ as an absolute constant that does not vary with $n$ for now.
Because of the convergence rate of $|\sigma_{\dagger,a}-\sigma_{\#,a}|$ shown in Theorems~\ref{thm: asymptotic CV expect Var} and \ref{thm: asymptotic expect Var}, the forms in \eqref{eq: asymptotic CV BE bound} and \eqref{eq: asymptotic BE bound} are somewhat deceiving in the sense that the seemingly first order term $\phi(z_\alpha) z_\alpha (\sigma_{\dagger,a}-\sigma_{\#,a})/\sigma_{\#,a}$ on the left-hand side has a faster convergence rate than the right-hand side, regardless of whether approximate sub-Weibull conditions hold and whether $Q_{\#,a}$ is chosen to yield a faster rate of $|\sigma_{\dagger,a}-\sigma_{\#,a}|$.
In other words, the rate of the right-hand side of \eqref{eq: asymptotic CV BE bound} and \eqref{eq: asymptotic BE bound} is an upper bound on the convergence rate of the Wald-CI coverage to its nominal coverage $1-\alpha$.

For the cross-fit procedure, the $\bigO(\{\log n/n\}^{1/2})$ term in the rate in Theorem~\ref{thm: asymptotic CV BE bound} is the same as the rate in the Berry-Esseen Theorem except for a log factor.
The contribution from estimating the nuisance outcome model can converge slower and strongly depends on whether approximate sub-Weibull conditions \ref{cond: CV tail} and \ref{cond: more CV tail} hold:
If $\hat{Q}_{k,a}$ is estimated from a (potentially misspecified) parametric model, so that we can take $\hat{r}_1(n)=\hat{r}_2(n)=\bigO(n^{-1/2})$, and approximate sub-Weibull conditions hold, then the contribution from estimating the outcome model is $\bigO(n^{-1/2} \log n)$, same as the Berry-Esseen theorem up to a log factor. However, if $\hat{Q}_{k,a}$ is estimated more flexibly so that $\|\hat{Q}_{k,a} - Q_{\#,a} \|_2$ has a slower rate, or approximate sub-Weibull conditions do not hold, then the contribution from nuisance estimation can have a slower rate.

For the non-cross-fit procedure, the rate additionally strongly depends on the complexity of the function class $\funclass$ containing $\tilde{Q}_a$, as measured by its covering number.
This is because, compared to the cross-fit procedure, Donsker conditions are additionally required to bound empirical process terms, whose rate depends on the function class complexity.
Among the empirical process terms in my derivation, the term that impacts the rate most is $(P_n-P_*) \{ \transform_a(\tilde{Q}_a) - \transform_a(Q_{\#,a}) \}$ concerning the deviation of $\tilde{\psi}_a - \psi_{*,a}$ from the sample mean $(P_n - P_*) D_a(Q_{\#,a},\psi_{*,a})$.

By Theorem~\ref{thm: asymptotic BE bound} and Corollary~\ref{coro: VC & hull rates}, the rates for VC classes (e.g., many parametric models) are similar to the rates for singleton classes, the simplest possible classes, which achieve the lower bound of the right-hand side of \eqref{eq: asymptotic BE bound}.
These rates are also similar to those for the cross-fit procedure, suggesting that cross-fit and non-cross-fit procedures have similar CI coverage if $\funclass$ is not rich.

In contrast, the rates for VC-hull classes, which are much richer than VC classes, can be much slower. If the VC index $\nu$ of the associated VC-class is large, namely $\funclass$ is rich, then the rate would be slow, and the approximate sub-Weibull conditions do not improve the rate as much as in the cross-fit procedure or the non-cross-fit procedure with VC classes.
For a rich VC-hull class, namely large $\nu$, the rate contribution from estimating the outcome model is approximately $\tilde{r}_1(n)^{1/2}$ up to a log factor, which is slower than the convergence rate of $\tilde{Q}_a$.
Theorem~\ref{thm: asymptotic BE bound} and Corollary~\ref{coro: VC & hull rates} are mainly for illustration rather than for proving accurate rates for specific regression algorithms, because tighter bounds might be derived for a specific algorithm to estimate $Q_{*,a}$ with additional knowledge on this algorithm.

\section{Bias of variance estimators} \label{sec: Var bias}

Although the bias of variance estimators does not contribute to the dominating terms in the difference between CI coverage and nominal coverage, because the magnitude of dominating terms might be conservative or the impact of the bias might be strong in moderate samples due to a large constant, I study this bias in a somewhat heuristic manner in this section.

In proving Theorems~\ref{thm: asymptotic CV expect Var} and \ref{thm: asymptotic expect Var}, the following proposition can be derived as the byproducts.
Consider the asymptotic scenario $n \to \infty$ in this section and recall the notation $\Theta_+$ introduced in Section~\ref{sec: prelim}.

\begin{proposition}[Asymptotic order of bias of variance estimators] \label{prop: asymptotic Var bias}
    Assume that Condition~\ref{cond: positivity} holds and let $f$ denote the positive bounded function $x \mapsto \{ \pi_*(0 \mid x)/\pi_*(1 \mid x) \}^{1/2}$.
    \begin{enumerate}
        \item For the cross-fit procedure, under Conditions~\ref{cond: CV exchangeable} and \ref{cond: CV common mean Q or consistent Q}, \label{prop: asymptotic Var bias 1}
        \begin{align}
            &\sigma_{\dagger,a}^2 - \sigma_{\#,a}^2 = \Theta_+ \left( \expect \|\hat{Q}_{k,a} - Q_{\#,a}\|_2^2 \right) - \Theta_+(K/n), \label{eq: asymptotic CV Var bias} \\
            &\sigma_{\dagger}^2 - \sigma_{\#}^2 = \Theta_+ \left( \expect \left\| f (\hat{Q}_{k,1} - Q_{\#,1}) + (\hat{Q}_{k,0} - Q_{\#,0})/f \right\|_2^2 \right) - \Theta_+(K/n). \label{eq: asymptotic ATE CV Var bias}
        \end{align}

        \item For the non-cross-fit procedure, under Conditions~\ref{cond: donsker} and \ref{cond: mean Q or consistent Q}, \label{prop: asymptotic Var bias 2}
        \begin{align}
            &\sigma_{\dagger,a}^2 - \sigma_{\#,a}^2 = \Theta_+ \left( \expect \|\tilde{Q}_a - Q_{\#,a}\|_2^2 \right) + \bigO(n^{-1/2}) - \Theta_+(n^{-1}), \label{eq: asymptotic Var bias} \\
            &\sigma_{\dagger}^2 - \sigma_{\#}^2 = \Theta_+ \left( \expect \left\| f (\tilde{Q}_1 - Q_{\#,1}) + (\tilde{Q}_0 - Q_{\#,0})/f \right\|_2^2 \right) + \bigO(n^{-1/2}) - \Theta_+(n^{-1}). \label{eq: asymptotic ATE Var bias}
        \end{align}
    \end{enumerate}
\end{proposition}

The corresponding non-asymptotic results can be found in Theorems~\ref{thm: CV expect Var} and \ref{thm: expect Var} in Section~\ref{sec: additional theory} of the Supplementary Material.
The term $\bigO(n^{-1/2})$ in \ref{eq: asymptotic Var bias} and \ref{eq: asymptotic ATE Var bias} might be improved to $\smallo(n^{-1/2})$ with its exact order depending on the function class $\funclass$.

For the cross-fit procedure, suppose that $K$ is fixed as $n \to \infty$.
If $\expect \|\hat{Q}_{k,a} - Q_{\#,a}\|_2^2$ has order $n^{-1}$ (e.g., $\hat{Q}_{k,a}$ is a parametric estimator), then the bias $\sigma_{\dagger,a}^2 - \sigma_{\#,a}^2 = \bigO(n^{-1})$.
If $\expect \|\hat{Q}_{k,a} - Q_{\#,a}\|_2^2 = \smallo(n^{-1/2})$, as anticipated when $\|\hat{Q}_{k,a} - Q_{\#,a}\|_2 = \smallo_p(n^{-1/4})$ (a common requirement in the nonparametric inference literature), then the bias $\sigma_{\dagger,a}^2 - \sigma_{\#,a}^2 = \smallo(n^{-1/2})$ still vanishes at a rate faster than the parametric rate $n^{-1/2}$.
If $\expect \|\hat{Q}_{k,a} - Q_{\#,a}\|_2^2$ vanishes slower than $n^{-1}$, then $\sigma_{\dagger,a}^2 > \sigma_{\#,a}^2$ for sufficiently large $n$. That is, for sufficiently large samples, $\hat{\sigma}_a^2$ is anticipated to overestimate $\sigma_{\#,a}^2$, contributing to overcoverage.
Similarly, $\hat{\sigma}^2$ is anticipated to overestimate $\sigma_{\#}^2$ if $\expect \|\hat{Q}_{k,a} - Q_{\#,a}\|_2^2$ vanishes slower than $n^{-1}$ by \eqref{eq: asymptotic ATE CV Var bias}, contributing to overcoverage of the Wald-CI for the ATE $\psi_*$.

For the non-cross-fit procedure, the right-hand side of \eqref{eq: asymptotic Var bias} has two terms similar to \eqref{eq: asymptotic CV Var bias}, while the additional empirical process term of order $\bigO(n^{-1/2})$ needs more investigation.
In particular, the sign of the bias $\sigma_{\dagger,a}^2 - \sigma_{\#,a}^2$ is more challenging to analyze.
The following proposition might shed light on this sign in a special case.

\begin{proposition} \label{prop: sign Var bias}
    Consider the non-crossfit procedure.
    Suppose that Conditions~\ref{cond: positivity}, \ref{cond: donsker} and \ref{cond: mean Q or consistent Q} hold,
    and $\tilde{Q}_a$ and $Q_{\#,a}$ are projections in the sense that
    $$P_n \frac{\ind(A=a)}{\pi_*(a \mid X)} \{Y - \tilde{Q}_a(X)\} \tilde{Q}_a(X) = P_* \frac{\ind(A=a)}{\pi_*(a \mid X)} \{Y-Q_{\#,a}(X)\} Q_{\#,a}(X) = 0.$$
    Then
    \begin{equation}
        \sigma_{\dagger,a}^2 - \sigma_{\#,a}^2 = \expect \left[ P_n \left\{ \frac{\ind(A=a)}{\pi_*(a \mid X)^2} \right\} \left\{ [Y-\tilde{Q}_a(X)]^2 - [Y-Q_{\#,a}(X)]^2 \right\} \right] + \expect[P_n (\tilde{Q}_a^2 - Q_{\#,a}^2)] - \Theta_+(n^{-1}), \label{eq: asymptotic Var bias 2}
    \end{equation}
    where the $\Theta_+(n^{-1})$ term is identical to that in \eqref{eq: asymptotic Var bias}.
    Under the same above conditions, suppose that $\tilde{Q}_a$ and $Q_{\#,a}$ are projections in the stronger sense that
    $$P_n \frac{\ind(A=a)}{\pi_*(a \mid X)} \{Y - \tilde{Q}_a(X)\} \tilde{Q}_{a'}(X) = P_* \frac{\ind(A=a)}{\pi_*(a \mid X)} \{Y-Q_{\#,a}(X)\} Q_{\#,a'}(X) = 0$$
    for any $a,a' \in \{0,1\}$. Then
    \begin{align}
        \sigma_{\dagger}^2 - \sigma_{\#}^2 &= \sum_{a=0}^1 \expect \left[ P_n \left\{ \frac{\ind(A=a)}{\pi_*(a \mid X)^2} \right\} \left\{ [Y-\tilde{Q}_a(X)]^2 - [Y-Q_{\#,a}(X)]^2 \right\} \right] \nonumber\\
        &\quad+ \expect[P_n \{ ( \tilde{Q}_1 - \tilde{Q}_0 )^2 - ( Q_{\#,1} - Q_{\#,0} )^2 \}] - \Theta_+(n^{-1}), \label{eq: asymptotic ATE Var bias 2}
    \end{align}
    where the $\Theta_+(n^{-1})$ term is identical to that in \eqref{eq: asymptotic ATE Var bias}.
\end{proposition}

The first term on the right-hand side of \eqref{eq: asymptotic Var bias 2} is non-positive when $\tilde{Q}_a$ fits the data no worse than $Q_{\#,a}$.
This holds, for example, if $x \mapsto \pi_*(a \mid x)$ is a constant function, $\tilde{Q}_a$ minimizes the empirical mean squared error among observations with $A=a$ in the function class $\funclass$, and $Q_{\#,a} \in \funclass$.
The magnitude of the second term $\expect[P_n (\tilde{Q}_a^2 - Q_{\#,a}^2)]$ on the right-hand side of \eqref{eq: asymptotic Var bias 2} can be bounded with bounds for empirical processes, but such analysis might fail to reveal this term's sign.
When $P_n \tilde{Q}_a^2$ is unrelated to the loss for obtaining $\tilde{Q}_a$, such bounds might also be overly conservative for its magnitude and $\expect[P_n (\tilde{Q}_a^2 - Q_{\#,a}^2)]$ might be arguably anticipated to be close to zero.
In other cases, $P_n \tilde{Q}_a^2$ often relates to a penalization on the roughness of $\tilde{Q}_a$ to avoid overfitting, in which case it might be arguably anticipated that $P_n (\tilde{Q}_a^2 - Q_{\#,a}^2) \leq 0$.
Therefore, it might be arguably anticipated that $\sigma_{\dagger,a}^2 - \sigma_{\#,a}^2 \leq 0$ if $\expect \|\tilde{Q}_a - Q_{\#,a}\|_2^2$ vanishes slowly and $n$ is large, contributing to decreased CI coverage.
Similarly, it might be arguably anticipated that the CI coverage for the ATE $\psi_*$ is decreased if $\expect \|\tilde{Q}_a - Q_{\#,a}\|_2^2$ vanishes slowly based on \eqref{eq: asymptotic ATE Var bias 2}.

The above analyses also suggest a potential trade-off between efficiency and Wald-CI coverage convergence rate, if the bias of the variance estimator actually contributes substantially to CI coverage, although this trade-off does not show up in (potentially conservative) Theorems~\ref{thm: asymptotic CV BE bound} and \ref{thm: asymptotic BE bound}.
I illustrate with the cross-fit procedure.
If we use a highly flexible estimator $\hat{Q}_{k,a}$ of $Q_{*,a}$ in order to achieve more efficiency in the ATE estimator $\hat{\psi}_a$, we might anticipate the followings: (i) $\expect \|\hat{Q}_{k,a} - Q_{\#,a}\|_2^2$ vanishes slowly, so (ii) $\sigma_{\dagger,a}^2 \to \sigma_{\#,a}^2$ slowly, and hence (iii) the Wald-CI coverage $\Pr(n^{1/2} (\hat{\psi}_a - \psi_{*,a}) \leq z_\alpha \hat{\sigma}_a)$ converges to the nominal coverage $1-\alpha$ slowly.

\section{Simulation} \label{sec: sim}

\subsection{Setup} \label{sec: sim setup}

I conduct a simulation to investigate the performance of various estimators of the ATE $\psi_{*,1}-\psi_{*,0}$ in an RCT.
The data is generated as follows.
The covariate $X$ is $p$ i.i.d.\  random variables generated from $\mathrm{Unif}(-1,1)$, with $p \in \{3,5,7\}$.
The treatment $A \mid X \sim \mathrm{Binom}(1, 0.5)$, so $\pi_*(1 \mid X) = 0.5$.
Conditional on $(X,A)$, the outcome $Y$ is generated from a normal distribution with mean
\begin{align*}
    Q_{*,a}(x) &:= 8 a + 7 |x_1| + 10 \sin(20 x_2) - 10 \{ \ind(x_3>0.3) x_3^2 + \ind(x_3 \leq 0.3)\} + 10 x_1 |x_2| x_3 + 14 \sqrt{|x_3|} \\
    &\quad+ 4 a/\{1+\expit(-8 x_1 x_2)\} + 9 \ind(x_1^2+x_2^2 > 0.25) - 13 \min\{x_1,x_2,x_3\} \\
    &\quad+ \max\{ \cos(7 x_1 + 13 x_3), x_2 \} (-17 + 5 a)
\end{align*}
and standard deviation $\max\{ 1, 0.1 |Q_{*,a}(x)|, 2 |\sin(Q_{*,a}(x))| \}$, where $x_j$ denote the $j$th entry of $x$.
The true mean function $Q_{*,a}$ depends on the first three components of $X$ and has nonlinear transformations, discrete jumps and interactions.
The sample size $n \in \{100,200,400\}$ is relatively small to moderate.
Each scenario has 800 simulation runs.
The true ATE is calculated by generating an independent sample of $10^6$ observations and computing the Monte Carlo mean of $Q_{*,1}(X)-Q_{*,0}(X)$. The efficient asymptotic variance (similar to $\sigma_{*,a}^2$ when estimating the mean counterfactual outcome $\psi_{*,a}$) is calculated as the Monte Carlo variance of the efficient influence function, similarly to the computation of the true ATE.

I consider the following estimators:
(i)~\texttt{IPW}: the inverse probability weighted estimator $\hat{\psi}_\IPW := P_n \{ \ind(A=1)/\pi_*(1 \mid X) - \ind(A=0)/\pi_*(0 \mid X) \} Y$, whose standard error is calculated based on the empirical variance of the estimated influence function $v \mapsto \{ \ind(a=1)/\pi_*(1 \mid x) - \ind(a=0)/\pi_*(0 \mid x) \} y - \hat{\psi}_\IPW$.
(ii)~\texttt{ANCOVA}: the analysis of covariance estimator
with heteroskedasticity-consistent standard error \protect\citep{Long2000}.
(iii)~\texttt{misSL}: a non-cross-fit AIPW estimator with $Q_{*,a}$ estimated via Super Learner \protect\citep{VanderLaan2007}, using a library that contains relatively simple estimators (e.g., GLM) and no consistent estimator of $Q_{*,a}$ (see Section~\ref{sec: sim2} of the Supplementary Material for a complete list).
(iv)~\texttt{CVmisSL}: the cross-fit version of \texttt{misSL} with $K=20$ to achieve a relatively large training sample size for estimating $Q_{*,a}$, following the recommendation of \protect\citet{Phillips2023}.
(v)~\texttt{SL}: an estimator similar to \texttt{misSL} except that the library further contains highly adaptive lasso \protect\citep{Benkeser2016,Hejazi2020} with various tuning parameters (see Section~\ref{sec: sim2} of the Supplementary Material for a complete list), which should be flexible enough to be consistent in large samples.
(vi)~\texttt{CVSL}: the cross-fit version of \texttt{SL} with $K=20$.
I construct 95\% Wald-CIs for all estimators.

I also calculate the mean plug-in estimated asymptotic variance and the oracle scaled variance (similar to $\sigma_{\dagger,a}^2$ and $\sigma_{\#,a}^2$ when estimating $\psi_{*,a}$) for $Q_{\#,a}$ being the mean of $\hat{Q}_{k,a}$ or $\tilde{Q}_a$, that is, satisfying Part~2 of Condition~\ref{cond: CV common mean Q or consistent Q} or \ref{cond: mean Q or consistent Q}.
For each AIPW estimator, the mean estimated asymptotic variance is approximated by the Monte Carlo mean over simulation runs.
It is more challenging to calculate $\sigma_{\#,a}^2$. I describe its calculation in Section~\ref{sec: sim2} of the Supplementary Material.

\subsection{Results} \label{sec: sim result}

Figures~\ref{fig: distr} and \ref{fig: var} present the sampling distributions of the ATE estimators and the corresponding estimated asymptotic variance, respectively.
Figure~\ref{fig: qq} presents the QQ-plot of the AIPW estimators.
As expected, all estimators appear approximately unbiased; \texttt{IPW} has the largest variance because predictive covariates are not used; due to less bias in estimating $Q_{*,a}$, \texttt{SL} and \texttt{CVSL} have smaller variance than \texttt{ANCOVA}, \texttt{misSL}, and \texttt{CVmisSL}.

The efficient asymptotic variance (similar to $\sigma_{*,a}^2$, the horizontal gray line in Fig.~\ref{fig: var}) is substantially smaller than the scaled variance of the ATE estimators, even for \texttt{SL} and \texttt{CVSL}.
In contrast, the oracle variance (similar to $\sigma_{\#,a}^2$, red crosses in Fig.~\ref{fig: var}) is much closer to the estimators' scaled variance.
Although the oracle variance is often smaller than the estimators' variance, the QQ-plots in Fig.~\ref{fig: qq} suggest that the oracle variance describes the sampling distribution well, except for a few points in the tails of \texttt{CVSL} and \texttt{SL}.
This phenomenon suggests that a sample size-dependent oracle variance characterizes the variance of the estimator much better than the theoretical large-sample variance in small-to-moderate samples, particularly when the outcome model estimator converges to the truth slowly.

All AIPW estimators' sampling distributions are close to normal, even for the non-cross-fit AIPW estimator with a rich functions class (\texttt{SL}).
Since the empirical processes concern the deviation of $\tilde{\psi}_a$ from its asymptotic linear expansion, the empirical process bounds used to derive Theorem~\ref{thm: asymptotic BE bound} appears to be overly conservative, as whether cross-fitting is adopted appears not to strongly deteriorate the normality of the non-cross-fit AIPW estimator, even if the function class $\funclass$ is rich and the sample size is small to moderate.

For \texttt{misSL} and \texttt{CVmisSL}, the estimators' scaled variance, the oracle variance, and the mean estimated variance are close, particularly when $n \geq 200$, which aligns with the fast convergence rate when the outcome model estimator converges fast and the function class $\funclass$ is not rich.
The mean estimated variance of the cross-fit procedure \texttt{CVSL} is larger than the oracle variance, particularly when $n=400$, which aligns with \eqref{eq: asymptotic CV Var bias} when $\hat{Q}_{k,a}$ converges to $Q_{\#,a}$ slowly.
For \texttt{SL}, the estimated variance is much smaller than the oracle variance and further smaller than the estimator's scaled variance. In fact, the estimator's scaled variance is almost out of the range of estimated variance. This aligns with the heuristic discussion in Section~\ref{sec: Var bias}.

Figure~\ref{fig: coverage} presents the CI coverage.
\texttt{SL} severely undercovers, and a plausible reason is the underestimated variance.
All other CIs appear to have approximately correct coverage, particularly when $n \geq 200$, aligning with the results and discussions in Sections~\ref{sec: BE bounds} and \ref{sec: Var bias}.
Although sometimes \texttt{CVSL} has a coverage that is statistically significantly lower than 95\%, the coverage is arguably practically close to 95\%, particularly when $n \geq 200$. Interestingly, with more covariates, \texttt{CVSL} appears to have higher coverage.
A possible reason is that, with more covariates, $\hat{Q}_{k,a}$ has more variability, so that the estimated variance has a positive bias by \eqref{eq: asymptotic CV Var bias}, which might remedy undercoverage.
The potential trade-off between efficiency and Wald-CI coverage shows up for the non-cross-fit procedure in the simulation: \texttt{misSL} is less efficient than $\texttt{SL}$ but has better Wald-CI coverage.
This trade-off is less clear for the cross-fit procedure in the simulation. A potential reason is the missing empirical process term.

In this simulation, \texttt{CVSL} appears to have the best overall performance: Its variance is among the smallest, and its CI coverage appears practically close to 95\% in all scenarios.
This simulation provides another piece of evidence for the superior performance of cross-fit nonparametric AIPW estimators of ATE in RCTs.

\section{Discussion} \label{sec: discussion}

The theoretical results might have room for improvement.
First, the bounds for empirical processes might be overly conservative. For example, it might be possible to reduce those bounds with some additional high-level conditions.
Second, there remains a theoretical gap regarding how the bias of the estimated variance impacts the CI coverage. Although the simulation and my argument in Section~\ref{sec: Var bias} indicate that cross-fitting may increase CI coverage and non-cross-fitting may decrease CI coverage, this bias is not a first-order term in my Berry-Esseen-type bounds in Theorems~\ref{thm: asymptotic CV BE bound} and \ref{thm: asymptotic BE bound}.
Third, I derived the bounds based on empirical process theory and tail bounds.
For simpler estimators such as the sample mean, methods based on characteristic functions yield sharper characterization (without log factors) under other mild conditions (e.g., the Berry-Essen Theorem).
I did not adopt this approach to accommodate the black-box nuisance function estimators.
With additional knowledge on the nuisance estimators, it is possible to use characteristic functions to achieve a more accurate characterization of the CI coverage convergence rate.
Finally, for the cross-fitting procedure, although I treat the number of folds $K$ as a variable, as I discuss in Section~\ref{sec: K choice} in the Supplementary Material, my bound might have room for improvement in terms of $K$. A theoretically sound recommendation of $K$ is an open question.

Nevertheless, I provide new theoretical justifications for adopting cross-fitting when using an AIPW estimator with a flexible outcome model estimator to estimate the ATE in an RCT.
On the one hand, my upper bound of the convergence rate of CI coverage to nominal coverage for cross-fitting vanishes faster than the non-cross-fit version.
On the other hand, cross-fitting overestimates the variance, providing additional protection against undercoverage.
The non-asymptotic results in Section~\ref{sec: additional theory} of the Supplementary Material may also enable asymptotic analysis in high-dimensional settings.
The bounds under the observational setting are an interesting future direction, but they might require substantively different analyses, although the analytic tools in the paper might help.
The results of this paper lay the foundation for future research on the coverage of CIs based on non- and semi-parametric estimators, and potentially for providing practical guidance on using such procedures.

\bibliographystyle{apalike}
\bibliography{ref}

\clearpage

\begin{figure}[tbh]
    \centering
    \includegraphics[width=0.9\linewidth]{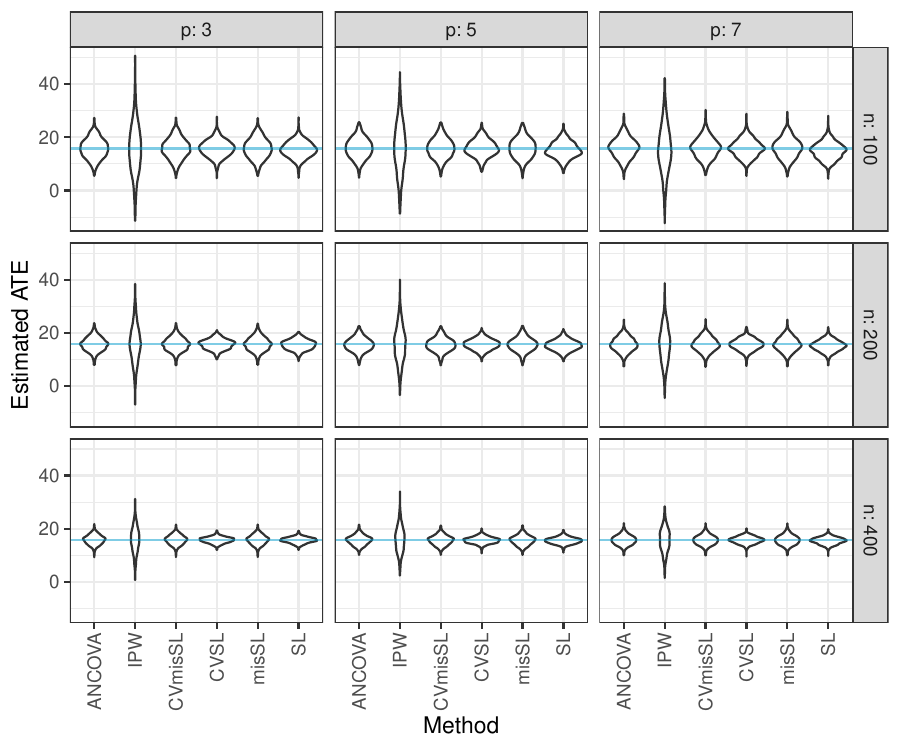}
    \caption{Sampling distribution of ATE estimators. The horizontal blue line is the true ATE.}
    \label{fig: distr}
\end{figure}

\begin{figure}[tbh]
    \centering
    \includegraphics[width=0.9\linewidth]{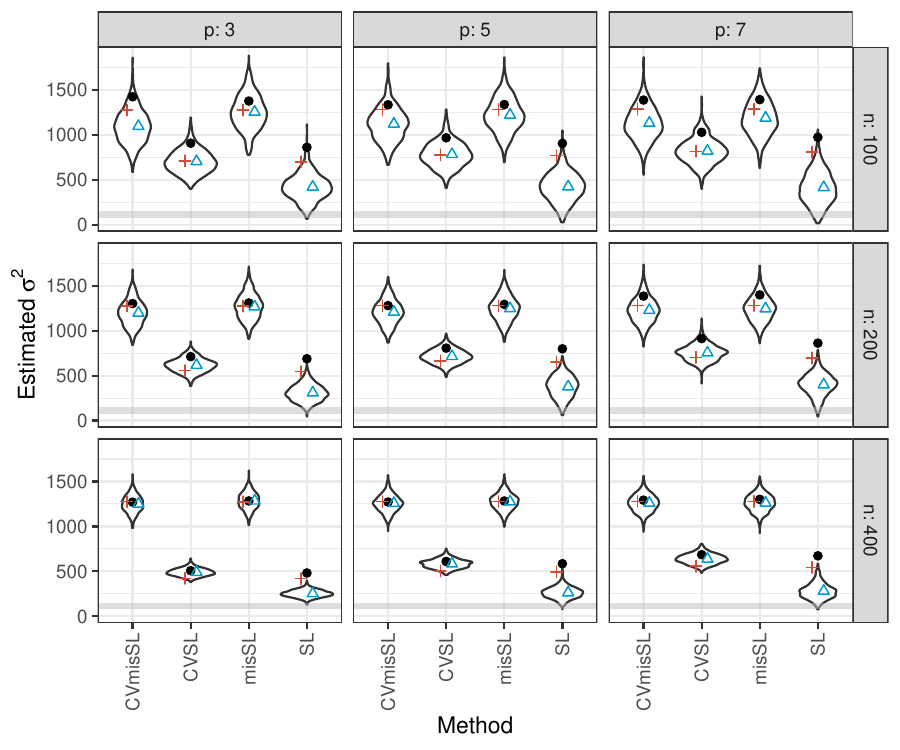}
    \caption{Sampling distribution of estimated influence function-based asymptotic variance for AIPW estimators (similar to $\hat{\sigma}_a^2$ and $\tilde{\sigma}_a^2$).
    The violin-like shapes are vertical densities of the estimated variance.
    The black dots are the Monte Carlo variance of ATE estimators, scaled by $n$ (similar to $n \Var(\hat{\psi}_a)$ and $n \Var(\tilde{\psi}_a)$).
    The blue triangles are the Monte Carlo average of the estimated variance (similar to $\sigma_{\dagger,a}^2$).
    The red crosses are the oracle variance (similar to $\sigma_{\#,a}^2$).
    The horizontal gray line is the efficient asymptotic variance (similar to $\sigma_{*,a}^2$).}
    \label{fig: var}
\end{figure}

\begin{figure}[tbh]
    \centering
    \includegraphics[width=0.9\linewidth]{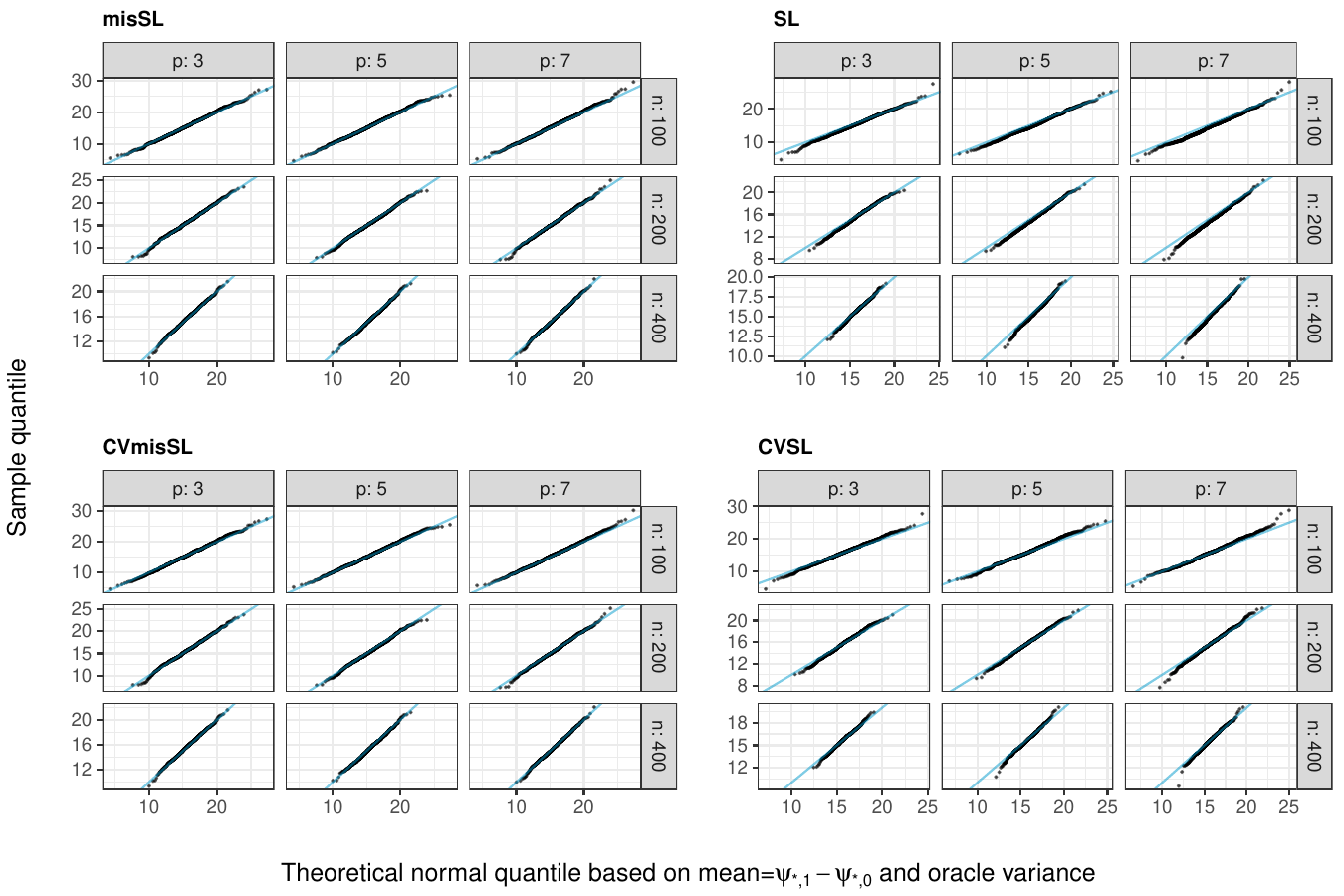}
    \caption{QQ-plot of AIPW estimators. The y-axis is the Monte Carlo sample quantile. The x-axis is the theoretical quantile of a normal distribution with mean being true ATE $\psi_{*,1}-\psi_{*,0}$ and variance being the oracle variance (similar to $\sigma_{\#,a}^2$). The blue line is the diagonal line $y=x$.}
    \label{fig: qq}
\end{figure}

\begin{figure}[tbh]
    \centering
    \includegraphics[width=0.9\linewidth]{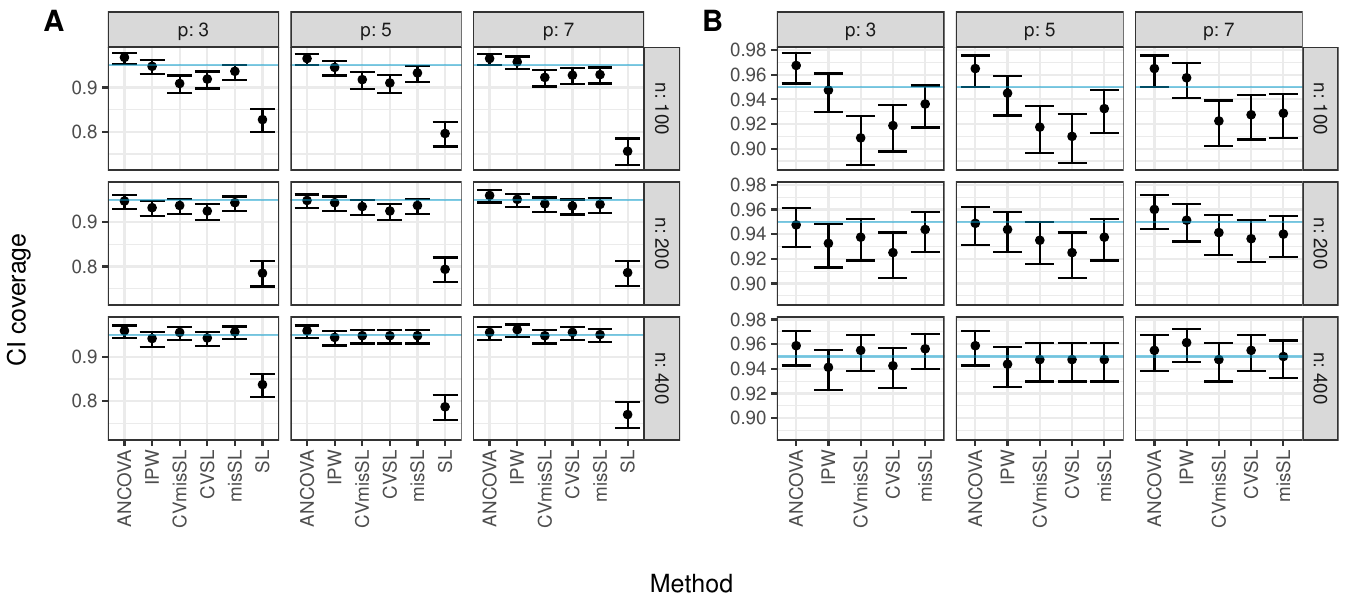}
    \caption{CI coverage with 95\% Wilson confidence intervals of coverage. The horizontal blue line is 95\%, the nominal coverage. Subfigure~(B) is a zoomed version of Subfigure~(A) with \texttt{SL} removed.}
    \label{fig: coverage}
\end{figure}

\clearpage

\setcounter{page}{1}
\setcounter{section}{0}
\renewcommand{\thepage}{S\arabic{page}}%
\renewcommand{\thesection}{S\arabic{section}}%
\renewcommand{\theHsection}{S\thesection}%
\setcounter{table}{0}
\renewcommand{\thetable}{S\arabic{table}}%
\renewcommand{\theHtable}{S\thetable}%
\setcounter{figure}{0}
\renewcommand{\thefigure}{S\arabic{figure}}%
\renewcommand{\theHfigure}{S\thefigure}%
\setcounter{equation}{0}
\renewcommand{\theequation}{S\arabic{equation}}%
\renewcommand{\theHequation}{S\theequation}%
\setcounter{condition}{0}
\renewcommand{\thecondition}{S\arabic{condition}}%
\renewcommand{\theHcondition}{S\thecondition}%
\setcounter{seriescond}{0}
\renewcommand{\theseriescond}{S\arabic{seriescond}}%
\renewcommand{\theHseriescond}{S\theseriescond}%
\setcounter{lemma}{0}
\renewcommand{\thelemma}{S\arabic{lemma}}%
\renewcommand{\theHlemma}{S\thelemma}%
\setcounter{theorem}{0}
\renewcommand{\thetheorem}{S\arabic{theorem}}%
\renewcommand{\theHtheorem}{S\thetheorem}%
\setcounter{corollary}{0}
\renewcommand{\thecorollary}{S\arabic{corollary}}%
\renewcommand{\theHcorollary}{S\thecorollary}%
\setcounter{proposition}{0}
\renewcommand{\theproposition}{S\arabic{proposition}}%
\renewcommand{\theHproposition}{S\theproposition}%

\begin{center}
    \LARGE Supplementary Material to ``\ourtitle''
\end{center}

\section{Additional theoretical results} \label{sec: additional theory}

In the following non-asymptotic results, the terms concerning the rates in the corresponding asymptotic scenario (Theorems~\ref{thm: asymptotic CV BE bound}--\ref{thm: asymptotic expect Var}), i.e., the terms depending on $(n,K,\hat{Q}_{k,a},\delta)$ in the upper bounds, are colored blue.

The next theorem is a non-asymptotic version of Theorem~\ref{thm: asymptotic CV BE bound} for $\psi_{*,a}$. A non-asymptotic version for $\psi_*$ can be derived similarly.
It implies Theorem~\ref{thm: asymptotic CV BE bound} by taking $q \geq 2$ to be an absolute constant and noting (i) that $\sigma_{\dagger,a}$ is bounded away from zero as $n \to \infty$ by Theorem~\ref{thm: CV expect Var} and Condition~\ref{cond: 2nd moment}, and (ii) that $2 \leq K \leq n$.
The case $z_\alpha=0$ needs special consideration because, in this case, the randomness in $\hat{\sigma}_a$ vanishes.
The case $\psi_{*,a}=0$ also needs special consideration because it concerns the behavior of $\hat{\psi}_{k,a}^2$, which is involved when analyzing the tail probability of $\hat{\sigma}_{k,a}$.
Recall the constants $\bar{\sigma}_a^2$ and $\bar{\rho}$ introduced below Condition~\ref{cond: higher moments}.

\begin{theorem}[Non-asymptotic Berry-Esseen-type bound for cross-fit AIPW Wald-CI] \label{thm: CV BE bound}
    Suppose that $\sigma_{\dagger,a}>0$, i.e., $\Pr(\hat{\sigma}_a>0) > 0$.
    Under Conditions~\ref{cond: 2nd moment}--\ref{cond: higher moments}, for any constant $q>0$, if $\psi_{*,a} = 0$ or $z_\alpha = 0$ (i.e., $\alpha = 0.5$) or $|I_k|$ is sufficiently large so that $2 (\sqrt{2}+1)^2 q \bar{\sigma}_a^2 \hl{\log n/|I_k|} \leq \psi_{*,a}^2$ for all $k=1,\ldots,K$, then
    \begin{align}
        &\left| \Pr(\sqrt{n} (\hat{\psi}_a - \psi_{*,a}) \leq z_\alpha \hat{\sigma}_a) - (1-\alpha) - \phi(z_\alpha) z_\alpha \frac{\sigma_{\dagger,a}-\sigma_{\#,a}}{\sigma_{\#,a}} \right| \nonumber \\
        &\leq \const \frac{\rho_{\#,a}}{\sigma_{\#,a}^3 \hl{\sqrt{n}}} + \const \hl{\sum_{k=1}^K} \Bigg[ \hl{\hat{\mathcal{R}}_k} + \frac{\hl{\hat{S}_k^2}}{\sigma_{\#,a}^4} + \left( \frac{\bar{m}}{\underline{m}^{3/2}} + \ind(z_\alpha \neq 0) \frac{\bar{\rho}}{\sigma_{*,a}^{3/2}} \right) \hl{\frac{1}{\sqrt{|I_k|}}} + \frac{|z_\alpha| \sqrt{2 q \hl{|I_k|} \bar{m} \hl{\log n}}}{\hl{n} \sigma_{\#,a} \sigma_{\dagger,a}} + \hl{n^{-q}} \nonumber \\
        &\qquad+ \frac{\hl{|I_k|} |z_\alpha|}{\hl{n} \sigma_{\#,a} \sigma_{\dagger,a}} \left\{ |\psi_{*,a}| \sqrt{q \frac{\bar{\sigma}_a^2}{\hl{|I_k|}} \hl{\log n}} + \ind(\psi_{*,a}=0) q \frac{\bar{\sigma}_a^2}{\hl{|I_k|}} \hl{\log n} + \hl{\frac{1}{|I_k|}} \left\{ \sigma_{\#,a} + \sqrt{\frac{1-\tau_\pi}{\tau_\pi} \hl{\expect \|\hat{Q}_{k,a} - Q_{\#,a}\|_2^2}} \right\}^2 \right\} \Bigg]. \label{eq: CV BE bound}
    \end{align}
    where
    \begin{align*}
        \hat{\mathcal{R}}_k &:= \left[ \left\{ \frac{\hl{|I_k|}}{\hl{n} \sigma_{\#,a}^2} \frac{1-\tau_\pi}{\tau_\pi} \right\}^{1/3} + \left\{ \frac{\hl{|I_k|^2} M^2 z_\alpha^2}{\hl{n^2} \sigma_{\#,a}^2 \sigma_{\dagger,a}^2} \left( \frac{1-\tau_\pi}{\tau_\pi} \right)^2 \right\}^{1/3} \right] \hl{\left\{ \expect \|\hat{Q}_{k,a} - Q_{\#,a} \|_2^2 \right\}^{1/3}}, \\
        \hat{S}_k &:= \hl{\frac{|I_k|}{n}} \Bigg\{ \frac{1-\tau_\pi}{\tau_\pi} \hl{\expect \| \hat{Q}_{k,a} - Q_{\#,a} \|_2^2} + 2 \frac{1-\tau_\pi}{\tau_\pi} \|Q_{\#,a} - Q_{*,a}\|_2 \hl{\expect \|\hat{Q}_{k,a} - Q_{\#,a}\|_2} \\
        &\qquad\qquad+ \hl{\frac{1}{|I_k|}} \left\{ \sigma_{\#,a} + \sqrt{\frac{1-\tau_\pi}{\tau_\pi} \hl{\expect \|\hat{Q}_{k,a} - Q_{\#,a}\|_2^2}} \right\}^2 \Bigg\}.
    \end{align*}
    Moreover, under Condition~\ref{cond: CV common mean Q or consistent Q}, $\hat{S}_k$ can be replaced by
    \begin{align*}
        \hat{S}_k' := \hl{\frac{|I_k|}{n}} \Bigg\{ \frac{1-\tau_\pi}{\tau_\pi} \expect \| \hat{Q}_{k,a} - Q_{\#,a} \|_2^2 + \hl{\frac{1}{|I_k|}} \left\{ \sigma_{\#,a} + \sqrt{\frac{1-\tau_\pi}{\tau_\pi} \hl{\expect \|\hat{Q}_{k,a} - Q_{\#,a}\|_2^2}} \right\}^2 \Bigg\};
    \end{align*}
    under approximate sub-Weibull conditions \ref{cond: CV tail} and \ref{cond: more CV tail}, $\mathcal{R}_k$ can be replaced by
    \begin{align*}
        \hat{\mathcal{R}}_k' &:= \frac{1}{\sigma_{\#,a}} \left\{ 2 [(q+\hat{d}_1) \hl{\log n}]^{(\hat{q}_1+2)/(2 \hat{q}_1)} \hat{c}_1^{-1/\hat{q}_1} \hl{\hat{r}_1(n)} \sqrt{\frac{\tau_\pi}{1-\tau_\pi} \hl{\frac{|I_k|}{n}} } \right\} \vee \left\{ \frac{8}{3} M \frac{1-\tau_\pi}{\tau_\pi} \frac{(q+\hat{d}_1) \hl{\log n}}{\hl{\sqrt{n}}} \right\} \\
        &\qquad+ \left[ \frac{(q+\hat{d}_2) \hl{\log n}}{\hat{c}_2}\right]^{1/\hat{q}_2} \frac{\hl{|I_k|} |z_\alpha| }{\hl{n} \sigma_{\dagger,a}} \hl{\hat{r}_2(n)} + \frac{\{ 2 \hat{a}_1 + \ind(z_\alpha \neq 0) \hat{a}_2 \} \hl{n^{-q}} + \{2 \hat{b}_1 + \ind(z_\alpha \neq 0) \hat{b}_2 \} \hl{\sqrt{\log n/n}}}{\sigma_{\#,a}}.
    \end{align*}
\end{theorem}

The next theorem is a non-asymptotic version of Theorem~\ref{thm: asymptotic CV expect Var} and Part~\ref{prop: asymptotic Var bias 1} of Proposition~\ref{prop: asymptotic Var bias}.
Recall the function $f$ defined in Proposition~\ref{prop: asymptotic Var bias}.
Theorem~\ref{thm: asymptotic CV expect Var} follows by noting that $\expect \|\hat{Q}_{k,a} - Q_{\#,a} \|_2 \leq \sqrt{\expect \|\hat{Q}_{k,a} - Q_{\#,a} \|_2^2}$ and $\Var(\hat{\psi}_{k,a}) = \bigO(n^{-1})$ by Lemma~\ref{lemma: bound CV Var psi} under the setting of Theorem~\ref{thm: asymptotic CV expect Var}.
Part~\ref{prop: asymptotic Var bias 1} of Proposition~\ref{prop: asymptotic Var bias} follows by Lemma~\ref{lemma: bound CV Var psi}.

\begin{theorem}[Bound on $|\sigma_{\dagger,a} - \sigma_{\#,a}|$ with cross-fitting] \label{thm: CV expect Var}
    Consider the cross-fit procedure. Under Conditions~\ref{cond: 2nd moment} and \ref{cond: positivity}, $|\sigma_{\dagger,a} - \sigma_{\#,a}| \leq \hl{\sum_{k=1}^K} \hl{\hat{S}_k}/\sigma_{\#,a}$.
    Additionally under Condition~\ref{cond: CV common mean Q or consistent Q}, 
    \begin{align}
        & \sigma_{\dagger,a}^2 - \sigma_{\#,a}^2 = \hl{\sum_{k=1}^K} \hl{\frac{|I_k|}{n}} \left\{ \expect \left[ P_* \frac{1-\pi_*(a \mid \cdot)}{\pi_*(a \mid \cdot)} \hl{(\hat{Q}_{k,a} - Q_{\#,a})^2} \right] - \hl{\Var(\hat{\psi}_{k,a})} \right\}, \label{eq: CV Var bias} \\
        & \sigma_{\dagger}^2 - \sigma_{\#}^2 = \hl{\sum_{k=1}^K} \hl{\frac{|I_k|}{n}} \left\{ \expect \left[ P_* \left\{ f \hl{(\hat{Q}_{k,1} - Q_{\#,1})} + \frac{1}{f} \hl{(\hat{Q}_{k,0} - Q_{\#,0})} \right\}^2 \right] - \hl{\Var(\hat{\psi}_{k})} \right\}, \label{eq: ATE CV Var bias} \\
    \end{align}
    and $|\sigma_{\dagger,a} - \sigma_{\#,a}| \leq \sum_{k=1}^K \hl{\hat{S}_k'}/\sigma_{\#,a}$.
\end{theorem}

The next theorem is a non-asymptotic version of Theorem~\ref{thm: asymptotic BE bound} for $\psi_{*,a}$.
It also implies Theorem~\ref{thm: asymptotic BE bound} by taking $q = 1$ and $\delta'=1$, and noting that $\sigma_{\dagger,a}$ is bounded away from zero as $n \to \infty$ by Theorem~\ref{thm: expect Var}.
Define constants $F^{H_a} := 2 M (1 - \tau_\pi)/\tau_\pi$, $F^{\transform_a^2} := 8 M^2 (1-\tau_\pi)/\tau_\pi^2$, and $\varrho_{\#,a} := P_* | \transform_a(Q_{\#,a})^2 - P_* \transform_a(Q_{\#,a})^2 |^3$.

\begin{theorem}[Non-asymptotic Berry-Esseen-type bound for non-cross-fit AIPW Wald-CI] \label{thm: BE bound}
    Suppose that $\sigma_{\dagger,a}>0$, i.e., $\Pr(\tilde{\sigma}_a>0) > 0$.
    Under Conditions~\ref{cond: 2nd moment}--\ref{cond: higher moments} and \ref{cond: donsker}, for any constants $\delta, \delta'>0$ and $q \geq 1$, 
    \begin{align}
        &\left| \Pr(\sqrt{n} (\tilde{\psi}_a - \psi_{*,a}) \leq z_\alpha \tilde{\sigma}_a) - (1-\alpha) - \phi(z_\alpha) z_\alpha \frac{\sigma_{\dagger,a}-\sigma_{\#,a}}{\sigma_{\#,a}} \right| \nonumber \\
        &\leq \const \frac{\rho_{\#,a}}{\sigma_{\#,a}^3 \hl{\sqrt{n}}} + \const \ind(z_\alpha \neq 0) \frac{\varrho_{\#,a}}{\varsigma_{\#,a}^3 \hl{\sqrt{n}}} + \const \hl{n^{-q}} + \hl{\tilde{\mathcal{R}}(\delta,\delta')} \nonumber \\
        &\quad+ \frac{\const}{\sigma_{\#,a}^4} \Bigg[ \frac{(F^{\transform_a^2})^2 J^2(2 \delta', \funclass, M)}{\hl{n}} + \frac{(F^{\transform_a^2})^2 J^4(2 \delta',\funclass,M)}{(\delta')^4 \hl{n^2}} + \left( \frac{1-\tau_\pi}{\tau_\pi} \right)^2 \hl{(\expect \|\tilde{Q}_a - Q_{\#,a} \|_2^2)^2} \nonumber \\
        &\qquad\qquad + \left( \frac{1-\tau_\pi}{\tau_\pi} \right)^2 \| Q_{\#,a} - Q_{*,a} \|_2^2 \hl{(\expect \|\tilde{Q}_a - Q_{\#,a} \|_2)^2} + \frac{\sigma_{\#,a}^4 + (F^{H_a})^4 J^4(2,\funclass,M)}{\hl{n^2}} \Bigg] \nonumber \\
        &\quad+ \const \frac{|z_\alpha|}{\sigma_{\#,a} \sigma_{\dagger,a}} \Bigg[ \frac{\sigma_{\#,a}^2 q \hl{\log n}}{\hl{n}} + (|\psi_{*,a}| \sigma_{\#,a} + \varsigma_{\#,a}) \sqrt{\frac{q \hl{\log n}}{\hl{n}}} + \frac{F^{H_a} \sigma_{\#,a} \sqrt{q \hl{\log n}}}{\hl{n}} \nonumber \\
        &\qquad\qquad+ \frac{(F^{H_a})^2 \{ J^2(2,\funclass,M) + q^2 \hl{(\log n)^2} \}}{\hl{n}} + \frac{|\psi_{*,a}| F^{H_a} \{J(2,\funclass,M) + q \hl{\log n} \}}{\hl{\sqrt{n}}} \nonumber \\
        &\qquad\qquad+ \left( F^{H_a} + \frac{F^{\transform_a^2}}{\hl{\sqrt{n}}} \right) \hl{J(2 \delta, \funclass, M)} + \frac{(F^{H_a} + F^{\transform_a^2} \hl{n^{-1/2}}) \hl{J^2(2 \delta,\funclass,M)}}{\hl{\delta^2 \sqrt{n}}} \nonumber \\
        &\qquad\qquad+ q \left( F^{H_a} + \frac{F^{\transform_a^2}}{\hl{\sqrt{n}}} \right) \hl{\left( \delta + \frac{1}{\sqrt{n}} \right) \log n} \Bigg]. \label{eq: BE bound}
    \end{align}
    where
    \begin{align*}
        \tilde{\mathcal{R}}(\delta,\delta') &:= \frac{\hl{\expect \| \tilde{Q}_a-Q_{\#,a} \|_2^2}}{4 \hl{\delta^2} M^2} + \const \left\{ \frac{16 M^2 (1-\tau_\pi)^2 z_\alpha^2}{\tau_\pi^2 \sigma_{\#,a}^2 \sigma_{\dagger,a}^2} \hl{\expect \| \tilde{Q}_a-Q_{\#,a} \|_2^2} \right\}^{1/3} \\
        &\quad+ \const \frac{(F^{\transform_a^2})^2 J^2(2, \funclass, M) \hl{(\expect \|\tilde{Q}_a - Q_{\#,a} \|_2^2)^2}}{\sigma_{\#,a}^4 (\delta')^4 M^4 n}.
    \end{align*}
    Additionally under approximate sub-Weibull conditions \ref{cond: tail} and \ref{cond: more tail}, \eqref{eq: BE bound} holds with $\tilde{\mathcal{R}}(\delta,\delta')$ replaced by
    \begin{align*}
        \tilde{\mathcal{R}}'(\delta,\delta') &:= \tilde{a}_1 n^{\tilde{d}_1} \exp \left\{ - \tilde{c}_1 \left( \frac{2 \hl{\delta} M}{\hl{\tilde{r}_1(n)}} \right)^{\tilde{q}_1} \right\} + \tilde{b}_1 \hl{n^{-1/2}  \log n} + \const \frac{|z_\alpha|}{\sigma_{\#,a} \sigma_{\dagger,a}} \hl{\tilde{r}_2(n)} \left( \frac{q+\tilde{d}_2}{\tilde{c}_2} \hl{\log n} \right)^{1/\tilde{q}_2} \\
        &\qquad+ \const \frac{(F^{\transform_a^2})^2 J^2(2, \funclass, M)}{\sigma_{\#,a}^4 \hl{n}} \left[ \tilde{a}_1^2 \exp \left\{ - 2\tilde{c}_1 \left( \frac{2 \delta' M}{\hl{\tilde{r}_1(n)}} \right)^{\tilde{q}_1} \right\} + \tilde{b}_1^2\hl{n^{-1} (\log n)^2} \right].
    \end{align*}
    Under Condition~\ref{cond: mean Q or consistent Q}, the term $\{ (1-\tau_\pi)/\tau_\pi \}^2 \| Q_{\#,a} - Q_{*,a} \|_2^2 \hl{(\expect \|\tilde{Q}_a - Q_{\#,a} \|_2)^2}$ can de dropped from the right-hand side of \eqref{eq: BE bound}.
\end{theorem}

The next theorem is a non-asymptotic version of Theorem~\ref{thm: asymptotic expect Var} and Part~\ref{prop: asymptotic Var bias 2} of Proposition~\ref{prop: asymptotic Var bias}.
It implies Theorem~\ref{thm: asymptotic expect Var} by taking $\delta=1$ and noting that $\expect \| \tilde{Q}_a - Q_{\#,a} \|_2 \leq \sqrt{\expect \| \tilde{Q}_a - Q_{\#,a} \|_2^2}$ and $\Var(\tilde{\psi}_a) = \bigO(n^{-1})$ by Lemma~\ref{lemma: bound Var tail psi} under the setting of Theorem~\ref{thm: asymptotic expect Var}.
The $\bigO(n^{-1/2})$ term might be able to be improved to $\smallo(n^{-1/2})$ or a more precise rate depending on the covering number of $\funclass$, by taking $\delta \to 0$ at a suitable rate.
Part~\ref{prop: asymptotic Var bias 2} of Proposition~\ref{prop: asymptotic Var bias} follows by noting that $\mathscr{B}$ in Theorem~\ref{thm: expect Var} is $\bigO(n^{-1/2})$ and Lemma~\ref{lemma: bound Var tail psi}.

\begin{theorem}[Bound on $|\sigma_{\dagger,a} - \sigma_{\#,a}|$ without cross-fitting] \label{thm: expect Var}
    Under Conditions~\ref{cond: 2nd moment}, \ref{cond: positivity}, \ref{cond: bounded Q} and \ref{cond: donsker}, for any $\delta > 0$,
    \begin{align}
        &\Bigg| \sigma_{\dagger,a}^2-\sigma_{\#,a}^2 \nonumber \\
        &- \left[ P_* \left\{ \frac{1-\pi_*(a \mid \cdot)}{\pi_*(a \mid \cdot)} \expect_{\tilde{Q}_a} [\hl{(\tilde{Q}_a-Q_{\#,a})^2}] \right\} + 2 P_* \left\{ \frac{1-\pi_*(a \mid \cdot)}{\pi_*(a \mid \cdot)} (Q_{\#,a}-Q_{*,a}) \expect_{\tilde{Q}_a}[\hl{\tilde{Q}_a - Q_{\#,a}}] \right\} - \hl{\Var(\tilde{\psi}_a)} \right] \Bigg| \nonumber \\
        &\leq \mathscr{B}, \label{eq: Var bias}
    \end{align}
    where
    \begin{align*}
        \mathscr{B} := \const \frac{F^{\transform_a^2} \hl{J(2 \delta, \funclass, M)}}{\hl{\sqrt{n}}} + \const \frac{F^{\transform_a^2} \hl{J^2(2 \delta,\funclass,M)}}{\hl{\delta^2 n}} + \const \frac{F^{\transform_a^2} J(2, \funclass, M) \hl{\expect \|\tilde{Q}_a - Q_{\#,a} \|_2^2}}{\sigma_{\#,a} \hl{\delta^2} M^2 \hl{\sqrt{n}}}.
    \end{align*}
    and
    \begin{align}
        |\sigma_{\dagger,a} - \sigma_{\#,a}| &\leq \frac{\mathscr{B}}{\sigma_{\#,a}} + \frac{1-\tau_\pi}{\tau_\pi \sigma_{\#,a}} \hl{\expect \|\tilde{Q}_a - Q_{\#,a} \|_2^2} + 2 \frac{1-\tau_\pi}{\tau_\pi \sigma_{\#,a}} \| Q_{\#,a} - Q_{*,a} \|_2 \hl{\expect \|\tilde{Q}_a - Q_{\#,a} \|_2} \nonumber \\
        &\quad+ \frac{\const}{\sigma_{\#,a} \hl{n}} \left\{ \sigma_{\#,a}^2 + (F^{H_a})^2 J^2(2,\funclass,M) \right\}. \label{eq: sd bias}
    \end{align}
    Under Condition~\ref{cond: mean Q or consistent Q}, the terms $2 P_* \left\{ \frac{1-\pi_*(a \mid \cdot)}{\pi_*(a \mid \cdot)} (Q_{\#,a}-Q_{*,a}) \expect_{\tilde{Q}_a}[\hl{\tilde{Q}_a - Q_{\#,a}}] \right\}$ and $2 \frac{1-\tau_\pi}{\tau_\pi \sigma_{\#,a}} \| Q_{\#,a} - Q_{*,a} \|_2 \allowbreak \hl{\expect \|\tilde{Q}_a -Q_{\#,a} \|_2}$ can be dropped from \eqref{eq: Var bias} and \eqref{eq: sd bias}, respectively.
    Under approximate sub-Weibull condition \ref{cond: tail}, the third term $\const \frac{F^{\transform_a^2} J(2, \funclass, M) \hl{\expect \|\tilde{Q}_a - Q_{\#,a} \|_2^2}}{\sigma_{\#,a} \hl{\delta^2} M^2 \hl{\sqrt{n}}}$ in $\mathscr{B}$ can be replaced by
    $$\frac{\const F^{\transform_a^2} J(2, \funclass, M)}{\sigma_{\#,a} \hl{\sqrt{n}}} \left[ \tilde{a}_1 \hl{n^{\tilde{d}_1}} \exp \left\{ - \tilde{c}_1 \left( \frac{2 \hl{\delta} M}{\hl{\tilde{r}_1(n)}} \right)^{\tilde{q}_1} \right\} + \tilde{b}_1 \hl{n^{-1/2} \log n} \right].$$
    Similarly, with $f$ defined in Proposition~\ref{prop: asymptotic Var bias}, under Conditions~\ref{cond: 2nd moment}, \ref{cond: positivity}, \ref{cond: bounded Q}, \ref{cond: donsker}, and \ref{cond: mean Q or consistent Q},
    \begin{align}
        &\Bigg| \sigma_{\dagger}^2 - \sigma_{\#}^2 \\
        &\quad- \Bigg\{ \expect \left[ P_* \left\{ f \hl{(\tilde{Q}_1-Q_{\#,1})} + \frac{1}{f} \hl{(\tilde{Q}_0-Q_{\#,0})} \right\}^2 \right] + 2 \sum_{a=0}^1 P_* \left\{ \frac{1}{\pi_*(a \mid \cdot)} (Q_{\#,a}-Q_{*,a}) \expect_{\tilde{Q}_a}[\hl{\tilde{Q}_a - Q_{\#,a}}] \right\} \\
        &\qquad\qquad- \hl{\Var(\tilde{\psi})} \Bigg\} \Bigg| \leq \mathscr{B}. \label{eq: ATE Var bias}
    \end{align}
\end{theorem}

\section{Series regression: an example of outcome model estimators with approximate sub-Weibull tails} \label{sec: light tail example}

In this section, I provide an example of outcome model estimators satisfying approximate sub-Weibull conditions with constants uniform over $n$.
The bounds used may be overly conservative, so approximate sub-Weibull conditions may hold more generally.
The notation introduced in this section is used only within this section.
Consider observing an i.i.d. sample $(X_i,Y_i)_{i=1}^n$ and estimating $Q_*: x \mapsto \expect[Y \mid X=x]$ with series regression.
Let $\{\phi_k\}_{k=1}^\infty$ be a basis of functions chosen by the user and $\Phi_K: x \mapsto (\phi_1(x),\ldots,\phi_K(x))^\top$ be the vector of the first $K$ basis functions.
Usually, $K=K(n)$ grows to infinity at an appropriate rate slower than $n$.
In the special case of ordinary linear regression, $K$ is a constant.
The corresponding series estimator $\hat{Q}$ of $Q_*$ is $x \mapsto \Phi_K(x)^\top \hat{\beta}$, where $\hat{\beta} := \{P_n \Phi_K(X) \Phi_K(X)^\top\}^+ \{P_n \Phi_K(X) Y\}$ and the superscript ``$+$'' denotes the Moore-Penrose pseudo-inverse.
Let $Q_\#: x \mapsto \Phi_K(x)^\top \beta_\#$ where $\beta_\# := \{P_* \Phi_K(X) \Phi_K(X)^\top\}^+ \{P_* \Phi_K(X) Y\}$, so $Q_\#$ is the true best approximation to $Q_*$ with the first $K$ basis functions.
This scenario corresponds to estimating $Q_{*,a}$ with series regression among individuals with $A=a$.

Let $\lambda_{\max}$ and $\lambda_{\min}$ denote the maximum and minimum eigenvalues of a matrix, respectively.
Let $\| \cdot \|_{\ltwo}$ denote the $\ell_2$ norm of a vector.

\begin{seriescond} \label{seriescond: l2}
    There exists a sequence of constants $\{\xi_K\}_{K=1}^\infty$ and constant invertible $K \times K$ matrices $\{T_K\}_{K=1}^\infty$ such that $\sup_{x \in \mathcal{X}} \| T_K \Phi_K(x) \|_\ltwo \leq \xi_K$.
\end{seriescond}

\begin{seriescond} \label{seriescond: eigen}
    There exist constants $0 < \underline{\lambda} \leq \bar{\lambda} < \infty$ such that $\underline{\lambda} \leq \lambda_{\min}(P_* T_K \Phi_K(X) \Phi_K(X)^\top T_K^\top) \leq \lambda_{\max}(P_* T_K \Phi_K(X) \Phi_K(X)^\top T_K^\top) \leq \bar{\lambda}$ for all $K$.
\end{seriescond}

\begin{seriescond} \label{seriescond: growth rate}
    The following rates hold: $K/n = \smallo(1), \xi_K^2/K = \bigO(1)$.
\end{seriescond}

\begin{seriescond} \label{seriescond: bounded}
    $Y$ is bounded and $\Var(Y) \mid X$ is bounded away from zero.
\end{seriescond}

These conditions are common in the series regression literature \citepsupp[e.g.,][]{Newey1997}.
The first part of the growth rate condition \ref{seriescond: growth rate} is very weak because only the variance $\hat{Q}-Q_\#$ needs to be controlled but the bias $Q_\#-Q_*$ need not.
The second part can be restrictive, but it is satisfied by some common series, including trigonometric polynomials, splines, orthogonal wavelets \protect\citepsupp{Chen2007}.
The boundedness of $Y$ in SeriesCond~\ref{seriescond: bounded} ensures sub-Gaussianity. It is generally mild and might be able to be relaxed to light-tail distributions.

\begin{proposition} \label{prop: series tail}
    There exist constants $a,b,c>0$ independent of $(K,n)$ such that, for all $t>0$,
    $$\Pr \left( \frac{\| \hat{Q} - Q_\# \|_{2}}{K/n} > t \right) \leq a \exp \left( -c t \right) + b \sqrt{\log n/n}.$$
\end{proposition}

Here, $r(n)$ is the convergence rate of $\| \hat{Q} - Q_\# \|_{2}$.
Since the sample size for estimating the outcome model has a tiny probability (decaying exponentially with $n$) to be insufficient, Conditions~\ref{cond: CV tail} and Condition~\ref{cond: tail} hold.
By Lemma~\ref{lemma: transform sqaure} and the boundedness of $\hat{Q}$, the tail of $P_* \transform_a(\hat{Q})^2$ is similar to $\| \hat{Q} - Q_{\#,a} \|_2$.
Thus, Conditions~\ref{cond: more CV tail} and \ref{cond: more tail} also hold.

\section{Impact of the choice of $K$ on the rates in Theorem~\ref{thm: asymptotic CV BE bound}} \label{sec: K choice}

For the cross-fit procedure, the choice of the number of folds $K$ in cross-fitting under exchangeable sample-splitting (Condition~\ref{cond: CV exchangeable}) is an interesting question.
If the impact of $K$ on $\expect \| \hat{Q}_{k,a} - Q_{\#,a}\|_2^2$ is ignored, Theorem~\ref{thm: asymptotic CV BE bound} suggests choosing a small $K$ because the bound increases with $K$.
However, a small $K$ would generally lead to a larger $\expect \| \hat{Q}_{k,a} - Q_{\#,a}\|_2^2$ because of fewer training data.
Without additional knowledge or assumptions on how $\expect \| \hat{Q}_{k,a} - Q_{\#,a}\|_2^2$ varies with training sample size $n(K-1)/K$, it appears impossible to obtain concrete theoretical guidance on the choice of $K$.
Suppose that $\expect \| \hat{Q}_{k,a} - Q_{\#,a}\|_2^2 = \bigO(\{n (K-1)/K\}^{-r})$ for some constant $r > 0$.
For example, if $\hat{Q}_{k,a}$ is estimated in a parametric model, $r$ can be anticipated to be one;
if the usual $\smallo_p(n^{-1/4})$-rate requirement on $\hat{Q}_{k,a}$ is achieved, $r$ can be anticipated to be larger than 1/2.
The resulting rate in Theorem~\ref{thm: asymptotic CV BE bound} is $n^{-r/3} K^{(r+2)/3} (K-1)^{-r/3}$, or $n^{-r/2} K^{(r+1)/2} (K-1)^{-r/2}$ (up to log factors) under approximate sub-Weibull conditions \ref{cond: CV tail} and \ref{cond: more CV tail}.
Figure~\ref{fig: K rate} illustrates the multiplicative factor involving $K$ in the rate for a range of $r$ and $K$, and suggests that a small $K$ is generally preferable.

This result differs from the practical recommendation in \protect\citetsupp{Phillips2023}, where a larger $K$ like 20 is recommended in smaller samples.
Because I use a crude union bound to analyze cross-fitting, the rate in Theorem~\ref{thm: asymptotic CV BE bound} might be suboptimal regarding $K$. In particular, I might exaggerate the loss in the rate due to a large $K$, so the above analysis might not accurately characterize the impact of $K$ in cross-fitting.
A sound theoretical justification on the choice of $K$ is still an open question.

\begin{figure}[tbh]
    \centering
    \includegraphics[width=0.9\linewidth]{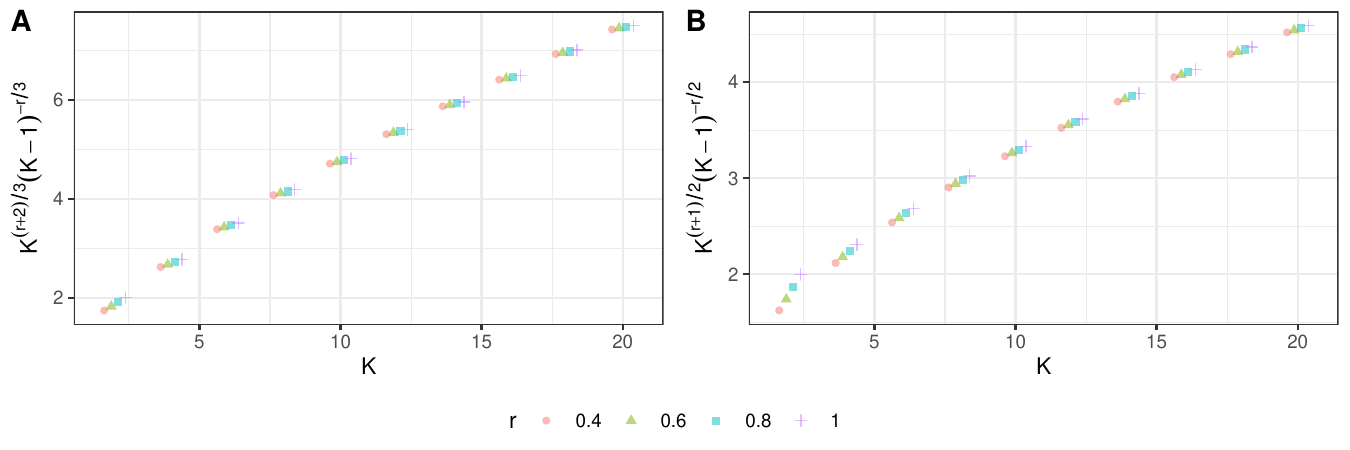}
    \caption{The factor concerning $K$ in the cross-fit Wald-CI coverage's convergence rate to the nominal coverage. The number of folds $K$ ranges over even numbers between 2 and 20. The rate $r$ ranges from 0.4 (slow) to 1 (parametric). Subfigures~(A) and (B) are for the cases with and without approximate sub-Weibull conditions \ref{cond: CV tail} and \ref{cond: more CV tail}, respectively. The points are dodged horizontally.}
    \label{fig: K rate}
\end{figure}

\section{Additional simulation details} \label{sec: sim2}

For \texttt{misSL} and \texttt{CVmisSL}, the Super Learner library for estimating $(x,a) \mapsto Q_{*,a}(x)$ contains the following learners:
\begin{itemize}
    \item zero function $(x,a) \mapsto 0$
    \item marginal mean outcome $(x,a) \mapsto \bar{Y}$
    \item linear regression with predictors $(A,X)$
    \item linear regression with predictors $(A,X)$ and all first-order interactions between predictors
    \item lasso linear regression with predictors $(A,X)$, with the pevalty tuning parameter selected via cross-validation.
\end{itemize}
For \texttt{SL} and \texttt{CVSL}, the library further contains highly adaptive lasso as implemented in R package \texttt{hal9001} \protect\citepsupp{Hejazi2020}, with \texttt{max\textunderscore degree}$\in \{2,3\}$, \texttt{smoothness\textunderscore orders}$\in \{0,1\}$, and knots automatically selected by the function \texttt{num\textunderscore knots\textunderscore generator}.

To calculate $\sigma_{\#,a}^2$, I first generate an independent test data of size $10^4$.
In each simulation run, I compute $\hat{Q}_{k,a}(x)$ or $\tilde{Q}_a(x)$ for $x$ in the test data, i.e., predict the outcome with the estimated outcome model.
For each $x$ in the test data, I calculate the average of these predictions across simulation runs, which approximates $Q_{\#,a}(x)$.
I finally approximate oracle scaled variance by the variance of the influence function based on $Q_{\#,a}$ over the test data.
The test data size $10^4$ is chosen to accommodate the limited computer memory. Given more memory, this size could be larger (e.g., $10^6$) to achieve more accuracy in Monte Carlo estimation.

\section{Proof} \label{sec: proof}

I focus on the proof of the results for estimating the mean counterfactual outcome $\psi_{*,a}$.
The results for the ATE $\psi_*=\psi_{*,1}-\psi_{*,0}$ can be derived similarly.
The proof of the variance estimators' bias is based on direct calculation, decomposing the bias into terms concerning different estimation errors, and applying non-asymptotic bounds to each term.
The proof of the Berry-Esseen-type bounds follows this road map: (1) use Lemma~\ref{lemma: delta} to upper- and lower-bound the coverage probability with terms involving finite-sample variance approximations $\sigma_{\#,a}$, deviation of the estimator from its linear expansion, and deviation of the variance estimator from its finite-sample approximation, with the help of a slacking variable $\eta$, (2) bound each deviation separately using non-asymptotic bounds, including concentration inequalities and Berry-Esseen Theorem, (3) use Berry-Esseen Theorem to bound the deviation of the linear expansion from the limiting normal distribution, (4) combine all above bounds to obtain a bound involving the slacking variable, and (5) choose an appropriate slacking variable to minimize or nearly minimize the bound.

\subsection{General preliminary lemmas} \label{sec: general lemmas}

The following lemma is a more general version of the delta method for CI coverage \protect\citepsupp[e.g., Section~2.7 in][]{Hall1992}.

\begin{lemma} \label{lemma: delta}
    For any random variables $A,B,t,\eta$ ($\eta \geq 0$),
    \begin{align*}
        \Pr(B \leq t - \eta) - \Pr(|A-B| > \eta) \leq \Pr(A \leq t) \leq \Pr(B \leq t+ \eta) + \Pr(|A-B| > \eta).
    \end{align*}
\end{lemma}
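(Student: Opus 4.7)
The plan is to establish both inequalities directly from elementary event inclusions, treating $t$ and $\eta$ as random variables pointwise (since $\eta \geq 0$ holds almost surely, all the relevant inclusions go through on a measure-one set). The two inequalities are symmetric in nature, so I will handle them in parallel.

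For the upper bound, I would write the decomposition
\[
\{A \leq t\} \subseteq \bigl(\{A \leq t\} \cap \{|A-B| \leq \eta\}\bigr) \cup \{|A-B| > \eta\}.
\]
The key observation is that on the event $\{A \leq t\} \cap \{|A-B| \leq \eta\}$, one has $B \leq A + \eta \leq t + \eta$, so this event is contained in $\{B \leq t+\eta\}$. Applying monotonicity and subadditivity of probability yields $\Pr(A \leq t) \leq \Pr(B \leq t+\eta) + \Pr(|A-B| > \eta)$.

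For the lower bound, I would apply the same idea with the roles of $A$ and $B$ swapped. Starting from
\[
\{B \leq t-\eta\} \subseteq \bigl(\{B \leq t-\eta\} \cap \{|A-B| \leq \eta\}\bigr) \cup \{|A-B| > \eta\},
\]
on the first event $A \leq B + \eta \leq t$, so it is contained in $\{A \leq t\}$. Subadditivity then gives $\Pr(B \leq t-\eta) \leq \Pr(A \leq t) + \Pr(|A-B| > \eta)$, which rearranges to the claimed lower bound.

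There is essentially no obstacle here; the only mild care needed is to note that the inclusions hold pointwise in $\omega$ when $t(\omega)$ and $\eta(\omega)$ are random, using $\eta \geq 0$ to justify $t - \eta \leq t + \eta$ when convenient. The lemma is a routine two-sided triangle-style bound, and the full proof should be just a few lines.
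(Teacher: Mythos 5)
Your proposal is correct and follows essentially the same argument as the paper: decompose over the event $\{|A-B|\le\eta\}$ and its complement, then use $|A-B|\le\eta$ to transfer the threshold between $A$ and $B$. The only cosmetic difference is that you prove the lower bound directly by the same inclusion with $A$ and $B$ swapped, whereas the paper obtains it by first swapping roles and then substituting $t-\eta$ for $t$; the content is identical.
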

\begin{proof}
\begin{align*}
    \Pr(A \leq t) &= \Pr(A \leq t, |A-B| \leq \eta) + \Pr(A \leq t, |A-B| > \eta) \\
    &\leq \Pr(B \leq t + \eta) + \Pr(|A-B| > \eta).
\end{align*}
Thus, the second inequality holds.

Switch the role of $A$ and $B$:
$$\Pr(B \leq t) \leq \Pr(A \leq t+\eta) + \Pr(|A-B| > \eta),$$
that is
$$\Pr(B \leq t) - \Pr(|A-B| > \eta) \leq \Pr(A \leq t+\eta).$$
Since $t$ and $\eta$ are arbitrary, replacing $t$ with $t-\eta$ yields
$$\Pr(B \leq t - \eta) - \Pr(|A-B| > \eta) \leq \Pr(A \leq t).$$
Thus, the first inequality holds.
\end{proof}

The next lemma is an application of Lemma~\ref{lemma: delta} to the AIPW estimator.
\begin{lemma} \label{lemma: AIPW delta}
    Let $\alpha \in (0,1)$ and $\eta \geq 0$ be any constants. For the non-cross-fit AIPW estimator $\tilde{\psi}_a$,
    \begin{align*}
        &\Pr(\sqrt{n} P_n D_a(Q_{\#,a},\psi_{*,a}) \leq z_\alpha \sigma_{\dagger,a} - \eta) - \Pr(|\sqrt{n} \{ \tilde{\psi}_a - \psi_{*,a} - P_n D_a(Q_{\#,a},\psi_{*,a}) \} - z_\alpha (\tilde{\sigma}_a - \sigma_{\dagger,a})| > \eta) \\
        &\leq \Pr(\sqrt{n} (\tilde{\psi}_a - \psi_{*,a}) \leq z_\alpha \tilde{\sigma}_a) \\
        &\leq \Pr(\sqrt{n} P_n D_a(Q_{\#,a},\psi_{*,a}) \leq z_\alpha \sigma_{\dagger,a} + \eta) + \Pr(|\sqrt{n} \{ \tilde{\psi}_a - \psi_{*,a} - P_n D_a(Q_{\#,a},\psi_{*,a}) \} - z_\alpha (\tilde{\sigma}_a - \sigma_{\dagger,a})| > \eta)
    \end{align*}
    The same result holds for the cross-fit estimator $\hat{\psi}_a$ with $(\tilde{\psi}_a,\tilde{\sigma}_a)$ replaced by $(\hat{\psi}_a,\hat{\sigma}_a)$.
\end{lemma}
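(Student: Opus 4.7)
\medskip

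\noindent\textbf{Proof proposal.} The plan is to reduce this lemma to a direct invocation of Lemma~\ref{lemma: delta} by identifying the right random variables. First, I would rewrite the event $\{\sqrt{n}(\tilde{\psi}_a - \psi_{*,a}) \leq z_\alpha \tilde{\sigma}_a\}$ as $\{A \leq 0\}$ where
\[
A := \sqrt{n}(\tilde{\psi}_a - \psi_{*,a}) - z_\alpha \tilde{\sigma}_a,
\]
and introduce its natural linear/oracle approximation
\[
B := \sqrt{n}\, P_n D_a(Q_{\#,a},\psi_{*,a}) - z_\alpha \sigma_{\dagger,a}.
\]
The key observation is that the residual $A - B$ is exactly the quantity appearing inside the probability on the far right of both inequalities in the lemma:
\[
A - B = \sqrt{n}\bigl\{\tilde{\psi}_a - \psi_{*,a} - P_n D_a(Q_{\#,a},\psi_{*,a})\bigr\} - z_\alpha(\tilde{\sigma}_a - \sigma_{\dagger,a}),
\]
so the residual probability matches the statement verbatim without any further manipulation.

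Next, I would apply Lemma~\ref{lemma: delta} with $t = 0$ to these choices of $A$ and $B$. The upper bound in Lemma~\ref{lemma: delta} yields $\Pr(A \leq 0) \leq \Pr(B \leq \eta) + \Pr(|A-B| > \eta)$, and $\{B \leq \eta\}$ translates to $\{\sqrt{n}\, P_n D_a(Q_{\#,a},\psi_{*,a}) \leq z_\alpha \sigma_{\dagger,a} + \eta\}$, recovering the displayed upper bound. Symmetrically, the lower bound $\Pr(B \leq -\eta) - \Pr(|A-B| > \eta) \leq \Pr(A \leq 0)$ gives the displayed lower bound, since $\{B \leq -\eta\}$ is $\{\sqrt{n}\, P_n D_a(Q_{\#,a},\psi_{*,a}) \leq z_\alpha \sigma_{\dagger,a} - \eta\}$.

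For the cross-fit case, the argument is identical with $(\tilde{\psi}_a,\tilde{\sigma}_a)$ replaced by $(\hat{\psi}_a,\hat{\sigma}_a)$ throughout; the structural step is simply choosing $A := \sqrt{n}(\hat{\psi}_a - \psi_{*,a}) - z_\alpha \hat{\sigma}_a$ and $B$ unchanged (the oracle linear approximation is the same in both cases because $P_n D_a(Q_{\#,a},\psi_{*,a})$ is computed from the full sample and $\sigma_{\dagger,a}$ is defined accordingly). There is no genuine obstacle here: the content of the lemma is entirely contained in Lemma~\ref{lemma: delta}, and the work is just bookkeeping to display the residual in the form most convenient for later arguments (where $\sqrt{n}(\tilde{\psi}_a - \psi_{*,a} - P_n D_a(Q_{\#,a},\psi_{*,a}))$ will be controlled by empirical process bounds and $\tilde{\sigma}_a - \sigma_{\dagger,a}$ by a concentration bound, and analogously for the cross-fit case).
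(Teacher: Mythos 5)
Your proposal is correct and is essentially identical to the paper's proof: the paper also applies Lemma~\ref{lemma: delta} with $A=\sqrt{n}(\tilde{\psi}_a-\psi_{*,a})-z_\alpha\tilde{\sigma}_a$, $B=\sqrt{n}P_n D_a(Q_{\#,a},\psi_{*,a})-z_\alpha\sigma_{\dagger,a}$, and $t=0$, handling the cross-fit case analogously. No gaps.
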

\begin{proof}
    For the non-cross-fit version, taking $A=\sqrt{n}(\tilde{\psi}_a - \psi_{*,a}) - z_\alpha \tilde{\sigma}_a$, $B=\sqrt{n} P_n D_a(Q_{\#,a},\psi_{*,a}) - z_\alpha \sigma_{\dagger,a}$, and $t=0$ in Lemma~\ref{lemma: delta} yields the desired result. The cross-fit version can be shown similarly.
\end{proof}

The next lemma provides further bounds on the bounds in Lemma~\ref{lemma: AIPW delta}.
\begin{lemma} \label{lemma: normal quantile}
    Under Conditions~\ref{cond: 2nd moment} and \ref{cond: 3rd moment},
    \begin{align*}
        \Pr(\sqrt{n} P_n D_a(Q_{\#,a},\psi_{*,a}) \leq z_\alpha \sigma_{\dagger,a} + \eta) &\leq 1-\alpha + \const \frac{\rho_{\#,a}}{\sigma_{\#,a}^3 \sqrt{n}} + \phi(z_\alpha) z_\alpha \frac{\sigma_{\dagger,a}-\sigma_{\#,a}}{\sigma_{\#,a}} + \const \frac{(\sigma_{\dagger,a}-\sigma_{\#,a})^2}{\sigma_{\#,a}^2} + \const \frac{\eta}{\sigma_{\#,a}}, \\
        \Pr(\sqrt{n} P_n D_a(Q_{\#,a},\psi_{*,a}) \leq z_\alpha \sigma_{\dagger,a} - \eta) &\geq 1-\alpha - \const \frac{\rho_{\#,a}}{\sigma_{\#,a}^3 \sqrt{n}} + \phi(z_\alpha) z_\alpha \frac{\sigma_{\dagger,a}-\sigma_{\#,a}}{\sigma_{\#,a}} - \const \frac{(\sigma_{\dagger,a}-\sigma_{\#,a})^2}{\sigma_{\#,a}^2} - \const \frac{\eta}{\sigma_{\#,a}}.
    \end{align*}
\end{lemma}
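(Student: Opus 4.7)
The plan is to apply the classical Berry-Esseen theorem to the sample mean $\sqrt{n} P_n D_a(Q_{\#,a},\psi_{*,a})/\sigma_{\#,a}$ and then extract the two perturbation terms ($z_\alpha$ times the variance gap, and $\eta$) via Taylor expansion of the standard Gaussian CDF.

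First, I would observe that $D_a(Q_{\#,a},\psi_{*,a})$ has $P_*$-mean zero. This is the familiar consequence of the known propensity score: for any function $Q$, $P_* \transform_a(Q) = P_* \{ \expect[\ind(A=a)\mid X]/\pi_*(a\mid X) \cdot (Y - Q(X)) + Q(X) \} = \psi_{*,a}$ by iterated expectation, so $P_* D_a(Q_{\#,a},\psi_{*,a}) = 0$. Hence $\sqrt{n} P_n D_a(Q_{\#,a},\psi_{*,a})$ is a normalized i.i.d. sum of centered variables with variance $\sigma_{\#,a}^2$ (Condition~\ref{cond: 2nd moment}) and third absolute moment $\rho_{\#,a}$ (Condition~\ref{cond: 3rd moment}). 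The Berry-Esseen theorem yields
\[
\sup_t \Bigl| \Pr\Bigl( \tfrac{\sqrt n P_n D_a(Q_{\#,a},\psi_{*,a})}{\sigma_{\#,a}} \leq t \Bigr) - \Phi(t) \Bigr| \;\leq\; \const\, \frac{\rho_{\#,a}}{\sigma_{\#,a}^3 \sqrt n}.
\]
Applying this at $t = z_\alpha \sigma_{\dagger,a}/\sigma_{\#,a} \pm \eta/\sigma_{\#,a}$ reduces both inequalities to bounding $\Phi\bigl( z_\alpha + z_\alpha (\sigma_{\dagger,a} - \sigma_{\#,a})/\sigma_{\#,a} \pm \eta/\sigma_{\#,a} \bigr) - (1 - \alpha)$.

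Next I would expand $\Phi$ via Taylor's theorem. Let $s := z_\alpha (\sigma_{\dagger,a} - \sigma_{\#,a})/\sigma_{\#,a}$. Using $\Phi(z_\alpha) = 1-\alpha$, $\Phi'(z_\alpha) = \phi(z_\alpha)$, and the absolute bound $|\Phi''(z)| = |z|\phi(z) \leq \const$ uniformly in $z$, Taylor's theorem with remainder gives
\[
\Phi(z_\alpha + s) = (1-\alpha) + \phi(z_\alpha)\, z_\alpha \frac{\sigma_{\dagger,a}-\sigma_{\#,a}}{\sigma_{\#,a}} + O\!\left( \frac{(\sigma_{\dagger,a}-\sigma_{\#,a})^2}{\sigma_{\#,a}^2} \right).
\]
For the $\pm \eta/\sigma_{\#,a}$ shift I would use the Lipschitz bound $|\Phi(u+h) - \Phi(u)| \leq (2\pi)^{-1/2} |h|$ rather than a second-order Taylor expansion; this is the key choice that yields a contribution of order $\eta/\sigma_{\#,a}$ rather than the squared form $\eta^2/\sigma_{\#,a}^2$ that the full Taylor bound would produce on the cross term. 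Combining the two bounds yields exactly the desired expression with $\const\, \eta/\sigma_{\#,a}$.

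Finally, putting the Berry-Esseen error and the two Taylor-remainder terms together gives both the upper and lower bounds stated in the lemma simultaneously. There is no real obstacle: the only small subtlety is the choice to use Lipschitz continuity rather than a quadratic expansion for the $\eta$-perturbation, which keeps the stated right-hand sides linear in $\eta$. The constants absorbed into $\const$ come from the Berry-Esseen constant, $\sup_z |\Phi''(z)|$, and $\sup_z \phi(z)$, all absolute.
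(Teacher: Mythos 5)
Your proposal is correct and takes essentially the same route as the paper: Berry--Esseen applied to the standardized i.i.d.\ sum at the shifted threshold, a first-order Taylor expansion of $\Phi$ at $z_\alpha$ with a bounded-second-derivative remainder for the variance-gap shift, and the Lipschitz bound $\sup_z \phi(z) \leq \const$ for the $\pm\eta/\sigma_{\#,a}$ shift. The only detail worth adding (which the paper makes explicit and your citation of Condition~\ref{cond: 2nd moment} only implicitly covers) is that $\sigma_{\#,a}>0$, which follows from $\sigma_{\#,a}^2 \geq \sigma_{*,a}^2 > 0$ because $D_a(Q_{*,a},\psi_{*,a})$ is the efficient influence function.
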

\begin{proof}
    Let $\Phi$ denote the cumulative distribution function of standard Gaussian. Note that $P_n D_a(Q_{\#,a},\psi_{*,a})$ is a sample mean of i.i.d. random variables, $P_* D_a(Q_{\#,a},\psi_{*,a}) =0$, $\rho_{\#,a} = P_* |D_a(Q_{\#,a},\psi_{*,a})|^3 < \infty$, $\sigma_{\#,a}^2 < \infty$ because $\rho_{\#,a}<\infty$, and $\sigma_{\#,a}^2 \geq \sigma_{*,a}^2 > 0$ because $D_a(Q_{*,a},\psi_{*,a})$ is the efficient influence function. By the Berry-Esseen Theorem,
    \begin{align*}
        &\Pr(\sqrt{n} P_n D_a(Q_{\#,a},\psi_{*,a}) \leq z_\alpha \sigma_{\dagger,a} + \eta) \\
        &= \Pr \left( \sqrt{n} \frac{P_n D_a(Q_{\#,a},\psi_{*,a})}{\sigma_{\#,a}} \leq z_\alpha + \frac{z_\alpha (\sigma_{\dagger,a} - \sigma_{\#,a}) + \eta}{\sigma_{\#,a}} \right) \\
        &\leq \Phi \left( z_\alpha + \frac{z_\alpha (\sigma_{\dagger,a} - \sigma_{\#,a}) + \eta}{\sigma_{\#,a}} \right) + \const \frac{\rho_{\#,a}}{\sigma_{\#,a}^3 \sqrt{n}} \\
        &= 1-\alpha + \const \frac{\rho_{\#,a}}{\sigma_{\#,a}^3 \sqrt{n}} + \Phi \left( z_\alpha + \frac{z_\alpha (\sigma_{\dagger,a} - \sigma_{\#,a})}{\sigma_{\#,a}} + \frac{\eta}{\sigma_{\#,a}} \right) - \Phi(z_\alpha) \\
        &\leq 1-\alpha + \const \frac{\rho_{\#,a}}{\sigma_{\#,a}^3 \sqrt{n}} + \phi(z_\alpha) z_\alpha \frac{\sigma_{\dagger,a} - \sigma_{\#,a}}{\sigma_{\#,a}} + \const \frac{(\sigma_{\dagger,a}-\sigma_{\#,a})^2}{\sigma_{\#,a}^2} + \const \frac{\eta}{\sigma_{\#,a}},
    \end{align*}
    where the last step follows from Taylor's expansion of $\Phi$ at $z_\alpha$, the positivity of $\phi$, and the boundedness of the first two derivatives of $\Phi$.
    The second inequality is similar.
\end{proof}

In all the following proof, I define $c/0 := \infty$ for any $c>0$, and $\infty \cdot 0 :=0$;
for any $a,b \in \real$, I use $a \wedge b$ to denote $\min \{a,b\}$.

\begin{lemma} \label{lemma: square diff inequality}
    Let $a,b \in \real$, $c > 0$. If $|a^2-b^2|>c$, then $|a-b| > \sqrt{b^2 + c} - |b| \geq (\sqrt{2}-1) \{\sqrt{c} \wedge c/|b|\}$.
\end{lemma}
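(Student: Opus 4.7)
The plan is to split the hypothesis $|a^2-b^2|>c$ into its two subcases $a^2>b^2+c$ and $a^2<b^2-c$, and apply the elementary estimate $|a-b|\geq \bigl||a|-|b|\bigr|$ in each. In the first case, $|a|>\sqrt{b^2+c}$ gives $|a-b|\geq |a|-|b|>\sqrt{b^2+c}-|b|$ directly. In the second case, $b^2>c$ is forced, and one gets $|a|<\sqrt{b^2-c}$ and hence $|a-b|\geq |b|-|a|>|b|-\sqrt{b^2-c}$. To put both cases under the single stated bound, I would rationalize:
\begin{equation*}
|b|-\sqrt{b^2-c}=\frac{c}{|b|+\sqrt{b^2-c}},\qquad \sqrt{b^2+c}-|b|=\frac{c}{\sqrt{b^2+c}+|b|},
\end{equation*}
and note that the former denominator is no larger than the latter because $\sqrt{b^2-c}\leq |b|\leq \sqrt{b^2+c}$, so $|b|-\sqrt{b^2-c}\geq \sqrt{b^2+c}-|b|$, unifying the two cases.

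For the second inequality, I would again use the rationalized form $\sqrt{b^2+c}-|b|=c/(\sqrt{b^2+c}+|b|)$ and split according to the size of $|b|$ relative to $\sqrt{c}$. If $|b|\leq \sqrt{c}$, then $\sqrt{b^2+c}\leq \sqrt{2c}$, whence $\sqrt{b^2+c}+|b|\leq (\sqrt{2}+1)\sqrt{c}$, giving $\sqrt{b^2+c}-|b|\geq \sqrt{c}/(\sqrt{2}+1)=(\sqrt{2}-1)\sqrt{c}$; in this regime $c/|b|\geq \sqrt{c}$, so $\sqrt{c}\wedge c/|b|=\sqrt{c}$ and the bound matches. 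If $|b|>\sqrt{c}$, then $\sqrt{b^2+c}\leq \sqrt{2}\,|b|$, whence $\sqrt{b^2+c}+|b|\leq (\sqrt{2}+1)|b|$, giving $\sqrt{b^2+c}-|b|\geq (\sqrt{2}-1)c/|b|$; here $\sqrt{c}\wedge c/|b|=c/|b|$ and the bound again matches. The boundary $b=0$ is handled by the paper's convention $c/0=\infty$, under which the claim reduces to $\sqrt{c}\geq (\sqrt{2}-1)\sqrt{c}$.

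The argument is wholly elementary and I do not anticipate any real obstacle; the only mild subtlety is verifying that Case~2 yields the stated bound rather than the apparently stronger $|b|-\sqrt{b^2-c}$, which is precisely what the rationalization comparison resolves. The identity $1/(\sqrt{2}+1)=\sqrt{2}-1$ is what pins down the constant $\sqrt{2}-1$ in the final estimate.
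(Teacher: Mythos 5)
Your proof is correct; the only part that differs from the paper is the first inequality. The paper gets it in one stroke, without any case split on the sign of $a^2-b^2$: it factors $c<|a^2-b^2|=|a+b|\,|a-b|$, bounds $|a+b|\le |a-b|+2|b|$ by the triangle inequality, and solves the resulting quadratic inequality $|a-b|^2+2|b|\,|a-b|>c$ for $|a-b|$, which yields $|a-b|>\sqrt{b^2+c}-|b|$ directly. You instead split into $a^2>b^2+c$ and $a^2<b^2-c$ and use the reverse triangle inequality $|a-b|\ge\bigl||a|-|b|\bigr|$; your extra comparison $|b|-\sqrt{b^2-c}=c/(|b|+\sqrt{b^2-c})\ge c/(\sqrt{b^2+c}+|b|)=\sqrt{b^2+c}-|b|$ is exactly what is needed to fold the second case into the stated bound, and it is valid since $\sqrt{b^2-c}\le\sqrt{b^2+c}$ (and $b^2>c$ is indeed forced there). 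The paper's route is slightly shorter and uniform; yours is marginally longer but shows in passing that the case $a^2<b^2-c$ actually satisfies the stronger bound $|b|-\sqrt{b^2-c}$, which the lemma does not need. For the second inequality your argument — rationalizing $\sqrt{b^2+c}-|b|=c/(\sqrt{b^2+c}+|b|)$ and splitting on $|b|\le\sqrt{c}$ versus $|b|>\sqrt{c}$, with $1/(\sqrt{2}+1)=\sqrt{2}-1$ fixing the constant — is exactly the paper's computation, and your handling of $b=0$ via the convention $c/0=\infty$ matches the paper's stated convention.
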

\begin{proof}
    By triangle inequality, $c < |a^2-b^2| = |a+b| |a-b| \leq (|a-b|+2|b|) |a-b| = |a-b|^2 + 2|b| |a-b|$. Solving this quadratic inequality for $|a-b|$ yields $|a-b| > \sqrt{b^2+c} - |b|$, noting that $|a-b| \geq 0$. Moreover,
    $$\sqrt{b^2+c} - |b| = \frac{c}{\sqrt{b^2+c} + |b|} \geq
    \begin{cases}
        \frac{c}{\sqrt{2 c}+\sqrt{c}} & (\text{if } |b| \leq \sqrt{c}) \\
        \frac{c}{\sqrt{2 b^2} + |b|} & (\text{if } |b| > \sqrt{c})
    \end{cases}
    \geq \frac{\sqrt{c} \wedge c/|b|}{\sqrt{2}+1} = (\sqrt{2}-1) \{\sqrt{c} \wedge c/|b|\}.$$
\end{proof}

For any function $Q$ and $a' \in \{0,1\}$, define function $H_{a'}(Q): v=(x,a,y) \mapsto \transform_{a'}(Q)(v) - \transform_{a'}(Q_{\#,a'})(v) = \{1- \ind(a=a')/\pi_*(a' \mid x)\} \{Q(x) - Q_{\#,a'}(x)\}$.
The next lemma provides the first and second moments of $H_a(Q)$.

\begin{lemma} \label{lemma: H mean var}
    For any function $Q$ and $a \in \{0,1\}$, $P_* H_a(Q) = 0$ and $P_* H_a(Q)^2 = P_* \frac{1-\pi_*(a \mid \cdot)}{\pi_*(a \mid \cdot)} (Q - Q_{\#,a})^2$, which lies in the interval $[\frac{\tau_\pi}{1-\tau_\pi} \| Q - Q_{\#,a} \|_{2}^2, \frac{1-\tau_\pi}{\tau_\pi} \| Q - Q_{\#,a} \|_{2}^2]$ under Condition~\ref{cond: positivity}.
\end{lemma}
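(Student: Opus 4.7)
Proof plan for Lemma~\ref{lemma: H mean var}:

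The plan is to reduce everything to an elementary conditional expectation calculation with respect to $A$ given $X$. First I would rewrite $H_a(Q)$ in a form more convenient than the one displayed in the main text, by expanding the definition of $\transform_a$ directly:
\begin{align*}
H_a(Q)(v)
&= \transform_a(Q)(v) - \transform_a(Q_{\#,a})(v) \\
&= \frac{\ind(a'=a)}{\pi_*(a\mid x)}\bigl\{y-Q(x)\bigr\} + Q(x) - \frac{\ind(a'=a)}{\pi_*(a\mid x)}\bigl\{y-Q_{\#,a}(x)\bigr\} - Q_{\#,a}(x) \\
&= \bigl\{Q(x)-Q_{\#,a}(x)\bigr\}\left\{1 - \frac{\ind(a'=a)}{\pi_*(a\mid x)}\right\},
\end{align*}
where I temporarily write $a'$ for the treatment component of $v=(x,a',y)$ to avoid clashing with the fixed index $a$. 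The point of this rewrite is that $y$ drops out, so $H_a(Q)$ is a function of $(X,A)$ alone, and everything reduces to computing two conditional moments of $1-\ind(A=a)/\pi_*(a\mid X)$.

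Next I would iterate expectations. Conditioning on $X$, the randomization assumption gives
$\expect[\ind(A=a)/\pi_*(a\mid X)\mid X] = \pi_*(a\mid X)/\pi_*(a\mid X) = 1$, so
\[
\expect\!\left[1 - \frac{\ind(A=a)}{\pi_*(a\mid X)}\,\Big|\,X\right] = 0,
\]
which immediately yields $P_* H_a(Q) = 0$ after pulling $\{Q(X)-Q_{\#,a}(X)\}$ through the outer expectation via the tower property. For the second moment I would expand the square and apply the same identity together with $\expect[\ind(A=a)/\pi_*(a\mid X)^2\mid X] = 1/\pi_*(a\mid X)$:
\[
\expect\!\left[\left\{1-\frac{\ind(A=a)}{\pi_*(a\mid X)}\right\}^{\!2}\,\Big|\,X\right] = 1 - 2 + \frac{1}{\pi_*(a\mid X)} = \frac{1-\pi_*(a\mid X)}{\pi_*(a\mid X)}.
\]
Multiplying by $\{Q(X)-Q_{\#,a}(X)\}^2$ and taking expectation over $X$ gives the stated identity $P_* H_a(Q)^2 = P_*\frac{1-\pi_*(a\mid \cdot)}{\pi_*(a\mid \cdot)}(Q-Q_{\#,a})^2$.

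Finally, the two-sided bound follows from Condition~\ref{cond: positivity}: since $\pi_*(a\mid x)\in[\tau_\pi,1-\tau_\pi]$, the weight $(1-\pi_*(a\mid x))/\pi_*(a\mid x)$ is sandwiched pointwise between $\tau_\pi/(1-\tau_\pi)$ and $(1-\tau_\pi)/\tau_\pi$, and integrating against $(Q-Q_{\#,a})^2$ with respect to the marginal of $X$ under $P_*$ produces the claimed interval in terms of $\|Q-Q_{\#,a}\|_2^2$. There is no real obstacle here; the only subtlety worth flagging is the $a$ versus $a'$ notational collision in the definition of $H_a$ and keeping track of which argument the indicator is tested against.
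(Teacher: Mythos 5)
Your proposal is correct and follows essentially the same route as the paper: rewrite $H_a(Q)$ so that $y$ drops out, compute the conditional moments of $1-\ind(A=a)/\pi_*(a\mid X)$ given $X$ by iterated expectation (yielding $0$ and $\{1-\pi_*(a\mid X)\}/\pi_*(a\mid X)$), and then bound the weight pointwise using Condition~\ref{cond: positivity}. Your remark about the notational collision in the displayed definition of $H_{a'}$ is well taken, and your corrected form $\{Q(x)-Q_{\#,a}(x)\}\{1-\ind(a'=a)/\pi_*(a\mid x)\}$ is exactly what the paper's own proof uses.
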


\begin{proof}
    \begin{align*}
        P_* H_a(Q) &= \expect \left[ \left( 1 - \frac{\ind(A=a)}{\pi_*(a \mid X)} \right) \left\{ Q(X) - Q_{\#,a}(X) \right\} \right] \\
        &= \expect \left[ \expect \left[ \left( 1 - \frac{\ind(A=a)}{\pi_*(a \mid X)} \right) \left\{ Q(X) - Q_{\#,a}(X) \right\} \mid X \right] \right] = 0, \\
        P_* H_a(Q)^2 &= \expect \left[ \left( 1 - \frac{\ind(A=a)}{\pi_*(a \mid X)} \right)^2 \left\{ Q(X) - Q_{\#,a}(X) \right\}^2 \right] \\
        &= \expect \left[ \expect \left[ \left( 1 - \frac{\ind(A=a)}{\pi_*(a \mid X)} \right)^2 \left\{ Q(X) - Q_{\#,a}(X) \right\}^2 \mid X \right] \right] \\
        &= \expect \left[ \expect \left[ \left( 1 - 2 \frac{\ind(A=a)}{\pi_*(a \mid X)} + \frac{\ind(A=a)}{\pi_*(a \mid X)^2} \right) \left\{ Q(X) - Q_{\#,a}(X) \right\}^2 \mid X \right] \right] \\
        &= \expect \left[ \left( 1-2 + \frac{1}{\pi_*(a \mid X)} \right) \left\{ Q(X) - Q_{\#,a}(X) \right\}^2 \right] \\
        &= \expect \left[ \frac{1-\pi_*(a \mid X)}{\pi_*(a \mid X)} \left\{ Q(X) - Q_{\#,a}(X) \right\}^2 \right].
    \end{align*}
    The bounds of $P_* H_a(Q)^2$ follow immediately from Condition~\ref{cond: positivity}.
\end{proof}

The following lemma shows some results about $\transform_a(Q)^2$ for any bounded function $Q$ based on straightforward but tedious calculations.
\begin{lemma} \label{lemma: transform sqaure}
    For any function $Q$,
    \begin{align*}
        \transform_a(Q)(V)^2 &= \frac{\ind(A=a)}{\pi_*(a \mid X)^2} \{Y - Q(X)\}^2 + 2 \frac{\ind(A=a)}{\pi_*(a \mid X)} \{Y - Q(X)\} Q(X) + Q(X)^2 \\
        &= \frac{\ind(A=a)}{\pi_*(a \mid X)^2} Y^2 - 2 \frac{\ind(A=a) (1-\pi_*(a \mid X))}{\pi_*(a \mid X)^2} Y Q(X) \\
        &\quad+ \left\{ \frac{\ind(A=a)}{\pi_*(a \mid X)^2} - 2 \frac{\ind(A=a)}{\pi_*(a \mid X)} + 1 \right\} Q(X)^2, \\
        P_* \transform_a(Q)^2 &= P_* \frac{\expect[Y^2 \mid A=a,X=\cdot]}{\pi_*(a \mid \cdot)} - 2 P_* \frac{1-\pi_*(a \mid \cdot)}{\pi_*(a \mid \cdot)} Q_{*,a} Q + P_* \frac{1-\pi_*(a \mid \cdot)}{\pi_*(a \mid \cdot)} Q^2.
    \end{align*}
    Thus, for any random function $\hat{Q}$,
    \begin{align*}
        \expect_{\hat{Q}}[\transform_a(\hat{Q})(V)^2] &= \frac{\ind(A=a)}{\pi_*(a \mid X)^2} Y^2 - 2 \frac{\ind(A=a) (1-\pi_*(a \mid X))}{\pi_*(a \mid X)^2} Y \expect_{\hat{Q}}[\hat{Q}(X)] \\
        &\quad+ \left\{ \frac{\ind(A=a)}{\pi_*(a \mid X)^2} - 2 \frac{\ind(A=a)}{\pi_*(a \mid X)} + 1 \right\} \expect_{\hat{Q}}[\hat{Q}(X)^2], \\
        \expect \left[ P_* \{ \transform_a(\hat{Q})^2 - \transform_a(Q_{\#,a})^2 \} \right] &= \expect \left[ P_* \frac{1-\pi_*(a \mid \cdot)}{\pi_*(a \mid \cdot)} (\hat{Q} - Q_{\#,a})^2 \right] \\
        &\quad+ 2 \expect \left[ P_* \frac{1-\pi_*(a \mid \cdot)}{\pi_*(a \mid \cdot)} (\hat{Q} - Q_{\#,a}) (Q_{\#,a} - Q_{*,a}) \right], \\
        P_* \{\transform_a(\hat{Q})^2 - \expect_{\hat{Q}}[\transform_a(\hat{Q})^2]\} &= P_* \frac{1-\pi_*(a \mid \cdot)}{\pi_*(a \mid \cdot)} \left\{ -2 Q_{*,a} (\hat{Q} - \expect_{\hat{Q}}[\hat{Q}]) + \hat{Q}^2 - \expect_{\hat{Q}}[\hat{Q}^2] \right\}.
    \end{align*}
    If $\hat{Q}$ is bounded by constant $M$,
    \begin{align*}
        &\expect[\{ P_* (\transform_a(\hat{Q})^2 - \expect_{\hat{Q}}[\transform_a(\hat{Q})^2]) \}^2] \leq 16M^2 P_* \left( \left\{ \frac{1-\pi_*(a \mid \cdot)}{\pi_*(a \mid \cdot)} \right\}^2 \expect_{\hat{Q}} \left[ \left( \hat{Q} - Q_{\#,a} \right)^2 \right] \right).
    \end{align*}
    Additionally under Condition~\ref{cond: positivity}, this variance is further upper bounded by
    $$16M^2 \left( \frac{1-\tau_\pi}{\tau_\pi} \right)^2 \expect \| \hat{Q}-Q_{\#,a} \|_{2}^2.$$
    Similarly, for any functions $Q_1$ and $Q_0$,
    \begin{align*}
        &P_* \{ \transform_1(Q_1) -\transform_0(Q_0) \}^2 = P_* \{ \transform_1(Q_1)^2 + \transform_0(Q_0)^2 - 2 \transform_1(Q_1) \transform_0(Q_0) \} \\
        &= P_* \frac{\expect[Y^2 \mid A=1,X=\cdot]}{\pi_*(1 \mid \cdot)} - 2 P_* \frac{\pi_*(0 \mid \cdot)}{\pi_*(1 \mid \cdot)} Q_{*,1} Q_1 + P_* \frac{\pi_*(0 \mid \cdot)}{\pi_*(1 \mid \cdot)} Q_1^2 \\
        &\quad+ P_* \frac{\expect[Y^2 \mid A=0,X=\cdot]}{\pi_*(0 \mid \cdot)} - 2 P_* \frac{\pi_*(1 \mid \cdot)}{\pi_*(0 \mid \cdot)} Q_{*,0} Q_0 + P_* \frac{\pi_*(1 \mid \cdot)}{\pi_*(0 \mid \cdot)} Q_0^2 \\
        &\quad-2 P_* (Q_{*,1} Q_0 + Q_{*,0} Q_1 - Q_1 Q_0).
    \end{align*}
\end{lemma}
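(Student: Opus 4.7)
The plan is to verify each displayed identity by direct algebraic computation from the definition $\transform_a(Q)(v) = \ind(a=a')\{y - Q(x)\}/\pi_*(a' \mid x) + Q(x)$, and then to obtain the variance bound by choosing a judicious deterministic centering that exposes a product factorization with a uniformly bounded factor. Everything except the final variance bound is essentially bookkeeping.

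For the two representations of $\transform_a(Q)(V)^2$, I would square the definition and use $\ind(A=a)^2 = \ind(A=a)$ to collapse the leading term, giving the first identity. Expanding $(Y - Q(X))^2 = Y^2 - 2 Y Q(X) + Q(X)^2$ and regrouping monomials by their dependence on $Y^2$, $Y Q(X)$, and $Q(X)^2$ then yields the second identity. Since every coefficient in the second form is independent of $Q$, applying $\expect_{\hat{Q}}$ term by term with $Q = \hat{Q}$ immediately gives the formula for $\expect_{\hat{Q}}[\transform_a(\hat{Q})(V)^2]$ by linearity.

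For $\expect[P_*\{\transform_a(\hat{Q})^2 - \transform_a(Q_{\#,a})^2\}]$, I would use the algebraic identity $a^2 - b^2 = (a-b)^2 + 2 b (a-b)$ with $a - b = H_a(\hat{Q})$ to decompose $\transform_a(\hat{Q})^2 - \transform_a(Q_{\#,a})^2 = H_a(\hat{Q})^2 + 2 \transform_a(Q_{\#,a}) H_a(\hat{Q})$. Lemma \ref{lemma: H mean var} handles the first summand. For the cross term I would condition on $(X, \hat{Q})$: after pulling $(\hat{Q} - Q_{\#,a})/\pi_*(a \mid \cdot)$ out of $H_a(\hat{Q})$, the remaining inner conditional expectation reduces via $\expect[\ind(A=a) Y \mid X] = \pi_*(a \mid X) Q_{*,a}(X)$ and $\expect[\ind(A=a) \mid X] = \pi_*(a \mid X)$ to $(Q_{*,a} - Q_{\#,a})(\pi_*(a \mid \cdot) - 1)$; multiplying back and integrating over $P_*$ produces the claimed cross term. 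For the centered version $P_*\{\transform_a(\hat{Q})^2 - \expect_{\hat{Q}}[\transform_a(\hat{Q})^2]\}$, I would subtract the two second-form expressions so that the $Y^2$ term cancels, and then apply $P_*$ to the remainder using $\expect[\ind(A=a)/\pi_*(a \mid X)^2 \mid X] = 1/\pi_*(a \mid X)$, $\expect[\ind(A=a)/\pi_*(a \mid X) \mid X] = 1$, and $\expect[\ind(A=a) Y/\pi_*(a \mid X)^2 \mid X] = Q_{*,a}(X)/\pi_*(a \mid X)$, which collapses every coefficient into the common factor $(1 - \pi_*(a \mid \cdot))/\pi_*(a \mid \cdot)$ multiplied by the displayed brace.

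The one non-routine step is the variance bound. I would set $U := P_* \frac{1-\pi_*(a \mid \cdot)}{\pi_*(a \mid \cdot)} (\hat{Q} - Q_{*,a})^2$ and observe that the identity just proved collapses to $P_*\{\transform_a(\hat{Q})^2 - \expect_{\hat{Q}}[\transform_a(\hat{Q})^2]\} = U - \expect_{\hat{Q}}[U]$ after the additive $Q_{*,a}^2$ piece cancels. Hence the quantity to be bounded equals $\Var_{\hat{Q}}(U) \leq \expect_{\hat{Q}}[(U - c)^2]$ for \emph{any} deterministic constant $c$; I would choose $c := P_* \frac{1-\pi_*(a \mid \cdot)}{\pi_*(a \mid \cdot)} (Q_{\#,a} - Q_{*,a})^2$, so that the difference-of-squares identity $(\hat{Q} - Q_{*,a})^2 - (Q_{\#,a} - Q_{*,a})^2 = (\hat{Q} - Q_{\#,a})(\hat{Q} + Q_{\#,a} - 2 Q_{*,a})$ factorises $U - c$ into a product. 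Cauchy--Schwarz under $P_*$, bundling the weight $\{(1-\pi_*(a \mid \cdot))/\pi_*(a \mid \cdot)\}^2$ with $(\hat{Q} - Q_{\#,a})^2$ on one side and using $(\hat{Q} + Q_{\#,a} - 2 Q_{*,a})^2 \leq 16 M^2$ from Condition \ref{cond: bounded Q} on the other, then yields $(U - c)^2 \leq 16 M^2 P_*\{((1-\pi_*(a \mid \cdot))/\pi_*(a \mid \cdot))^2 (\hat{Q} - Q_{\#,a})^2\}$. Taking expectation over $\hat{Q}$ delivers the first variance bound, and replacing $(1-\pi_*)/\pi_*$ with its uniform upper bound $(1-\tau_\pi)/\tau_\pi$ under Condition \ref{cond: positivity} gives the last display. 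The main obstacle I anticipate is locating this centering $c$: the naive choice $c = \expect_{\hat{Q}}[U]$ produces a centred quadratic whose factorization carries a $\Var_{\hat{Q}}(\hat{Q})$ correction and does not split into a uniformly bounded product, whereas the deterministic $c$ above is precisely what unlocks the clean $4 M$ factor.
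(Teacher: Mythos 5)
Your proposal is correct, and the identity computations coincide with the paper's (which simply says they follow by direct calculation); the only place where you genuinely diverge is the final variance bound, and there your route differs from the paper's in a real but equally valid way. The paper starts from the same identity $P_*\{\transform_a(\hat{Q})^2 - \expect_{\hat{Q}}[\transform_a(\hat{Q})^2]\} = P_*\frac{1-\pi_*(a\mid\cdot)}{\pi_*(a\mid\cdot)}\{-2Q_{*,a}(\hat{Q}-\expect_{\hat{Q}}[\hat{Q}]) + \hat{Q}^2-\expect_{\hat{Q}}[\hat{Q}^2]\}$, applies Jensen to move the square inside $P_*$ (which is how the weight $\{(1-\pi_*)/\pi_*\}^2$ appears), splits the brace into the two summands, bounds each one by $4M^2\,\expect_{\hat{Q}}[(\hat{Q}-Q_{\#,a})^2]$ — using $Q_{*,a}^2\leq M^2$ together with the variance-minimization property $\expect_{\hat{Q}}[(\hat{Q}-\expect_{\hat{Q}}[\hat{Q}])^2]\leq\expect_{\hat{Q}}[(\hat{Q}-Q_{\#,a})^2]$ for the linear piece, and the analogous centering of $\hat{Q}^2$ at $Q_{\#,a}^2$ plus $|\hat{Q}+Q_{\#,a}|\leq 2M$ for the quadratic piece — and then combines the two by the triangle inequality in $L_2$ of the joint measure, giving $(2M+2M)^2 = 16M^2$. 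You instead collapse the whole centered quantity into $U-\expect_{\hat{Q}}[U]$ with $U = P_*\frac{1-\pi_*(a\mid\cdot)}{\pi_*(a\mid\cdot)}(\hat{Q}-Q_{*,a})^2$, re-center once at the deterministic constant $c = P_*\frac{1-\pi_*(a\mid\cdot)}{\pi_*(a\mid\cdot)}(Q_{\#,a}-Q_{*,a})^2$, factorize by difference of squares, and apply Cauchy--Schwarz with $(\hat{Q}+Q_{\#,a}-2Q_{*,a})^2\leq 16M^2$; after Fubini this gives exactly the stated bound with the same constant. Both arguments use variance-$\leq$-MSE-about-a-constant and the boundedness of $Q_{*,a}$ and $Q_{\#,a}$ from Condition~\ref{cond: bounded Q} (assumed throughout the paper, so this is legitimate); yours does it once at the level of the scalar $U$ and dispenses with the Minkowski step, which is slightly cleaner, while the paper's component-wise version does not require spotting the $U-\expect_{\hat{Q}}[U]$ collapse. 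Your intermediate conditional-expectation computations for the cross term and the centered identity also check out.
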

\begin{proof}
    The first five equations follow from direct calculation.
    Moreover, $\expect[P_* (\transform_a(\hat{Q})^2 - \expect_{\hat{Q}}[\transform_a(\hat{Q})^2])] = 0$ and
    \begin{align*}
        &\expect[\{ P_* (\transform_a(\hat{Q})^2 - \expect_{\hat{Q}}[\transform_a(\hat{Q})^2]) \}^2] \\
        &\leq \expect \left[ P_* \left\{ \frac{1-\pi_*(a \mid \cdot)}{\pi_*(a \mid \cdot)} \right\}^2 \left\{ -2 Q_{*,a} (\hat{Q} - \expect_{\hat{Q}}[\hat{Q}]) + \hat{Q}^2 - \expect_{\hat{Q}}[\hat{Q}^2] \right\}^2 \right].
    \end{align*}
    If $\hat{Q}$ is bounded by $M$, then
    \begin{align*}
        &\expect \left[ P_* \left\{ \frac{1-\pi_*(a \mid \cdot)}{\pi_*(a \mid \cdot)} \right\}^2 \left\{ -2 Q_{*,a} (\hat{Q} - \expect_{\hat{Q}}[\hat{Q}]) \right\}^2 \right] \\
        &= 4 P_* \left\{ \frac{1-\pi_*(a \mid \cdot)}{\pi_*(a \mid \cdot)} \right\}^2 Q_{*,a}^2 \expect_{\hat{Q}} \left[ (\hat{Q} - \expect_{\hat{Q}}[\hat{Q}])^2 \right] \\
        &\leq 4 M^2 P_* \left\{ \frac{1-\pi_*(a \mid \cdot)}{\pi_*(a \mid \cdot)} \right\}^2 \expect_{\hat{Q}} \left[ (\hat{Q} - Q_{\#,a} )^2 \right], \\
        &\expect \left[ P_* \left\{ \frac{1-\pi_*(a \mid \cdot)}{\pi_*(a \mid \cdot)} \right\}^2 \left\{ \hat{Q}^2 - \expect_{\hat{Q}}[\hat{Q}^2] \right\}^2 \right] \\
        &= P_* \left\{ \frac{1-\pi_*(a \mid \cdot)}{\pi_*(a \mid \cdot)} \right\}^2 \expect_{\hat{Q}} \left[ \left( \hat{Q}^2 - \expect_{\hat{Q}}[\hat{Q}^2] \right)^2 \right] \\
        &\leq P_* \left\{ \frac{1-\pi_*(a \mid \cdot)}{\pi_*(a \mid \cdot)} \right\}^2 \expect_{\hat{Q}} \left[ \left( \hat{Q}^2 - Q_{\#,a}^2 \right)^2 \right] \\
        &\leq 4 M^2 P_* \left\{ \frac{1-\pi_*(a \mid \cdot)}{\pi_*(a \mid \cdot)} \right\}^2 \expect_{\hat{Q}} \left[ \left( \hat{Q} - Q_{\#,a} \right)^2 \right].
    \end{align*}
    By triangle inequality for the $L_2(P_*)$ norm,
    \begin{align*}
        &\expect \left[ P_* \left\{ \frac{1-\pi_*(a \mid \cdot)}{\pi_*(a \mid \cdot)} \right\}^2 \left\{ -2 Q_{*,a} (\hat{Q} - \expect_{\hat{Q}}[\hat{Q}]) + \hat{Q}^2 - \expect_{\hat{Q}}[\hat{Q}^2] \right\}^2 \right] \\
        &\leq \Bigg\{ \sqrt{\expect \left[ P_* \left\{ \frac{1-\pi_*(a \mid \cdot)}{\pi_*(a \mid \cdot)} \right\}^2 \left\{ -2 Q_{*,a} (\hat{Q} - \expect_{\hat{Q}}[\hat{Q}]) \right\}^2 \right]} \\
        &\qquad+ \sqrt{\expect \left[ P_* \left\{ \frac{1-\pi_*(a \mid \cdot)}{\pi_*(a \mid \cdot)} \right\}^2 \left\{ \hat{Q}^2 - \expect_{\hat{Q}}[\hat{Q}^2] \right\}^2 \right]} \Bigg\}^{2} \\
        &\leq 16M^2 P_* \left\{ \frac{1-\pi_*(a \mid \cdot)}{\pi_*(a \mid \cdot)} \right\}^2 \expect_{\hat{Q}} \left[ \left( \hat{Q} - Q_{\#,a} \right)^2 \right].
    \end{align*}
    The bound on $\expect[\{ P_* (\transform_a(\hat{Q})^2 - \expect_{\hat{Q}}[\transform_a(\hat{Q})^2]) \}^2]$ under Condition~\ref{cond: positivity} follows immediately by upper-bounding $\{1-\pi_*(a \mid x)\}/\pi_*(a \mid x)$.
\end{proof}

\subsection{Results for cross-fit AIPW procedure} \label{sec: CV proof}

The next lemma is a tail bound for $P_{n,k} H_a(\hat{Q}_{k,a})$.
The tail bound without the approximate sub-Weibull condition \ref{cond: CV tail} can be improved under Condition~\ref{cond: bounded Q}, but it should not affect the rate in Theorem~\ref{thm: asymptotic CV BE bound}.

\begin{lemma} \label{lemma: CV empirical process}
    Under Condition~\ref{cond: positivity}, for any constant $\eta > 0$ and any $k \in \{1,\ldots,K\}$,
    $$\Pr(|I_k| |P_{n,k} H_a(\hat{Q}_{k,a})| > n^{1/2} \eta) \leq \frac{|I_k|}{n} \frac{1-\tau_\pi}{\tau_\pi \eta^2} \expect \| \hat{Q}_{k,a} - Q_{\#,a} \|_{2}^2.$$
    Moreover, under Conditions~\ref{cond: bounded Q} and \ref{cond: CV tail},
    for any constant $q>0$,
    \begin{align*}
        &\Pr(|I_k| |P_{n,k} H_a(\hat{Q}_{k,a})| > n^{1/2} \eta) \leq 2 \exp (-\eta') + 2 \hat{a}_1 n^{-q} + 2 \hat{b}_1 \sqrt{\log n / n},
    \end{align*}
    where
    $$\eta' := \eta'(\eta) := \frac{\tau_\pi}{1-\tau_\pi} \left\{ \frac{n [(q+\hat{d}_1) \hat{c}_1^{-1} \log n]^{-2/\hat{q}_1} \eta^2}{4 |I_k| \hat{r}_1(n)^2} \wedge \frac{3 \sqrt{n} \eta}{8 M} \right\}.$$
\end{lemma}

\begin{proof}
    Note that $P_{n,k} H_a(\hat{Q}_{k,a})$ is a sample average of $H_a(\hat{Q}_{k,a})(V_i)$ ($i \in I_k$), which are i.i.d.\  random variables conditional on $\hat{Q}_{k,a}$.
    By Lemma~\ref{lemma: H mean var}, $\expect[P_{n,k} H_a(\hat{Q}_{k,a}) \mid \hat{Q}_{k,a}] = 0$ and $\Var(|I_k| P_{n,k} H_a(\hat{Q}_{k,a}) \mid \hat{Q}_{k,a}) = |I_k| P_* \frac{1-\pi_*(a \mid \cdot)}{\pi_*(a \mid \cdot)} (\hat{Q}_{k,a} - Q_{\#,a})^2$.
    
    First show the bound without Conditions~\ref{cond: bounded Q} and \ref{cond: CV tail}.
    By Chebyshev's inequality,
    \begin{align*}
        \Pr(|I_k| |P_{n,k} H_a(\hat{Q}_{k,a})| > n^{1/2} \eta) &= \expect \left[ \Pr( |P_{n,k} H_a(\hat{Q}_{k,a})| > n^{1/2} \eta/|I_k| \mid \hat{Q}_{k,a} ) \right] \\
        &\leq \expect \left[ \frac{|I_k|}{n \eta^2} P_* \frac{1-\pi_*(a \mid \cdot)}{\pi_*(a \mid \cdot)} (\hat{Q}_{k,a} - Q_{\#,a})^2 \right] \\
        &\leq \frac{|I_k|}{n} \frac{1-\tau_\pi}{\tau_\pi \eta^2} \expect \| \hat{Q}_{k,a} - Q_{\#,a} \|_{2}^2,
    \end{align*}
    where the last step follows from Condition~\ref{cond: positivity}.

    Next show the bound under Conditions~\ref{cond: bounded Q} and \ref{cond: CV tail}.
    For any $x \geq 0$, let
    $$\hat{\eta}(x) := \frac{\tau_\pi}{1-\tau_\pi} \left( \frac{n \eta^2}{4 |I_k| x^2} \wedge \frac{3 \sqrt{n} \eta}{8 M} \right),$$
    where $1/0 := \infty$.
    Under Conditions~\ref{cond: positivity} and \ref{cond: bounded Q}, $|H_a(\hat{Q}_{k,a})(V)| \leq 2 \frac{1-\tau_\pi}{\tau_\pi} M$.
    By Bernstein's inequality for bounded distributions \citepsupp[e.g., Theorem~2.8.4 in][]{Vershynin2018} and the fact that $1/(a+b) \geq \min\{1/a,1/b\}/2$ for any $a,b >0$,
    \begin{align}
        &\Pr(|I_k| |P_{n,k} H_a(\hat{Q}_{k,a})| > n^{1/2} \eta) \nonumber \\
        &= \expect \left[ \Pr( |I_k| |P_{n,k} H_a(\hat{Q}_{k,a})| > n^{1/2} \eta \mid \hat{Q}_{k,a} ) \right] \nonumber \\
        &\leq 2 \expect \left[ \exp \left\{ - \frac{n \eta^2}{2 |I_k| P_* \frac{1-\pi_*(a \mid \cdot)}{\pi_*(a \mid \cdot)} (\hat{Q}_{k,a} - Q_{\#,a})^2 + \frac{4}{3} \frac{1-\tau_\pi}{\tau_\pi} M n^{1/2} \eta} \right\} \right] \nonumber \\
        &\leq 2 \expect \left[ \exp \left\{ - \frac{\tau_\pi}{1-\tau_\pi} \frac{n \eta^2}{2 |I_k| \| \hat{Q}_{k,a} - Q_{\#,a} \|_2^2 + 4 M n^{1/2} \eta/3} \right\} \right] \nonumber \\
        &\leq 2 \expect \left[ \exp \left\{ - \frac{\tau_\pi}{1-\tau_\pi} \left( \frac{n \eta^2}{4 |I_k| \| \hat{Q}_{k,a} - Q_{\#,a} \|_2^2} \wedge \frac{3 \sqrt{n} \eta}{8 M} \right) \right\} \right] \nonumber \\
        &= 2 \expect \left[ \exp \{- \hat{\eta}(\| \hat{Q}_{k,a} - Q_{\#,a} \|_2)\} \right]. \label{eq: CV tail1}
    \end{align}
    Let $t>0$ be arbitrary. The right-hand side of \eqref{eq: CV tail1} equals
    \begin{align*}
        &2 \expect \left[ \exp \{- \hat{\eta}(\| \hat{Q}_{k,a} - Q_{\#,a} \|_2)\} \ind(\| \hat{Q}_{k,a} - Q_{\#,a} \|_{2} \leq t) \right] \\
        &+2 \expect \left[ \exp \{- \hat{\eta}(\| \hat{Q}_{k,a} - Q_{\#,a} \|_2)\} \ind(\| \hat{Q}_{k,a} - Q_{\#,a} \|_{2} > t) \right] \\
        &\leq 2 \exp \{- \hat{\eta}(t)\} + 2 \Pr(\| \hat{Q}_{k,a} - Q_{\#,a} \|_{2} > t) \\
        &\leq 2 \exp \{- \hat{\eta}(t)\} + 2 \hat{a}_1 n^{\hat{d}_1} \exp \left\{ -\hat{c}_1 \left( \frac{t}{\hat{r}_1(n)} \right)^{\hat{q}_1} \right\} + 2 \hat{b}_1 \sqrt{\log n / n}.
    \end{align*}
    Taking $t = \hat{r}_1(n) [(q+\hat{d}_1) \hat{c}_1^{-1} \log n]^{1/\hat{q}_1}$ so that $\eta' = \hat{\eta}(t)$ yields the desired inequality.

\end{proof}

The tail bound under Condition~\ref{cond: CV tail} could be slightly improved by choosing $t$ that balances the two exponents, but this leads to a much more complicated bound and does not ultimately improve the rate in Theorem~\ref{thm: asymptotic CV BE bound}.

The next lemma concerns finite-sample mean and variance of the cross-fit AIPW estimator $\hat{\psi}_a$.
\begin{lemma} \label{lemma: bound CV Var psi}
    Let $k \in \{1,\ldots,K\}$ be arbitrary. It holds that $\expect[\hat{\psi}_{k,a}]=\expect[\hat{\psi}_a]=\psi_{*,a}$,
    \begin{align*}
        \Var(\hat{\psi}_{k,a}) &\leq \frac{1}{|I_k|} \left\{ \sigma_{\#,a} + \sqrt{\expect \left[ P_* \frac{1-\pi_*(a \mid \cdot)}{\pi_*(a \mid \cdot)} (\hat{Q}_{k,a} - Q_{\#,a})^2 \right]} \right\}^2, \\
        \Var(\hat{\psi}_a) &\leq \left\{ \frac{\sigma_{\#,a}}{\sqrt{n}} + \sum_{k=1}^K \frac{|I_k|}{n} \sqrt{\expect \left[ P_* \frac{1-\pi_*(a \mid \cdot)}{\pi_*(a \mid \cdot)} \frac{(\hat{Q}_{k,a} - Q_{\#,a})^2}{|I_k|} \right]} \right\}^2.
    \end{align*}
    Additionally under Condition~\ref{cond: positivity},
    \begin{align*}
        \Var(\hat{\psi}_{k,a}) &\leq \frac{1}{|I_k|} \left\{ \sigma_{\#,a} + \sqrt{\frac{1-\tau_\pi}{\tau_\pi} \expect \|\hat{Q}_{k,a} - Q_{\#,a}\|_2^2} \right\}^2, \\
        \Var(\hat{\psi}_a) &\leq \left\{ \frac{\sigma_{\#,a}}{\sqrt{n}} + \sum_{k=1}^K \frac{\sqrt{|I_k|}}{n} \sqrt{\frac{1-\tau_\pi}{\tau_\pi} \expect \|\hat{Q}_{k,a} - Q_{\#,a}\|_2^2} \right\}^2.
    \end{align*}
\end{lemma}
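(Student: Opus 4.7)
The plan is to handle the mean statement first, then the per-fold variance, and finally the full cross-fit variance, in each case reducing to tools already available: the known-propensity identity $P_* \transform_a(Q) = \psi_{*,a}$, the law of total variance, Lemma~\ref{lemma: H mean var}, and triangle inequalities in $L_2$. For the mean, I would observe that, because $\hat{Q}_{k,a}$ is a function of the out-of-fold data $(V_i)_{i \notin I_k}$ only, it is independent of the in-fold data $(V_i)_{i \in I_k}$, so $\expect[\hat{\psi}_{k,a} \mid \hat{Q}_{k,a}] = P_* \transform_a(\hat{Q}_{k,a})$. A quick direct computation (conditioning on $X$ and using the known $\pi_*$) gives $P_* \transform_a(Q) = \psi_{*,a}$ for any $Q$, hence $\expect[\hat{\psi}_{k,a}] = \psi_{*,a}$. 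Averaging over the folds with weights $|I_k|/n$ yields $\expect[\hat{\psi}_a] = \psi_{*,a}$.

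For $\Var(\hat{\psi}_{k,a})$, I would apply the law of total variance conditioning on $\hat{Q}_{k,a}$: because the conditional mean is the constant $\psi_{*,a}$, only the expected conditional variance contributes. Conditional on $\hat{Q}_{k,a}$, the fold-$k$ observations are i.i.d.\ draws from $P_*$, so $\Var(\hat{\psi}_{k,a} \mid \hat{Q}_{k,a}) = |I_k|^{-1} P_* D_a(\hat{Q}_{k,a}, \psi_{*,a})^2$. Writing $D_a(\hat{Q}_{k,a}, \psi_{*,a}) = D_a(Q_{\#,a}, \psi_{*,a}) + H_a(\hat{Q}_{k,a})$ and invoking the triangle inequality in $L_2(P_* \otimes \Pr_{\hat{Q}_{k,a}})$ gives
\begin{equation*}
    \sqrt{\expect[P_* D_a(\hat{Q}_{k,a}, \psi_{*,a})^2]} \leq \sigma_{\#,a} + \sqrt{\expect[P_* H_a(\hat{Q}_{k,a})^2]}.
\end{equation*}
Lemma~\ref{lemma: H mean var} rewrites the last term as $\sqrt{\expect[P_* \{(1-\pi_*(a\mid \cdot))/\pi_*(a\mid \cdot)\}(\hat{Q}_{k,a}-Q_{\#,a})^2]}$. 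Squaring and dividing by $|I_k|$ yields the first stated bound.

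For the cross-fit estimator, I would use the linearization
\begin{equation*}
    \hat{\psi}_a - \psi_{*,a} = P_n D_a(Q_{\#,a},\psi_{*,a}) + n^{-1} \sum_{k=1}^K |I_k|\, P_{n,k} H_a(\hat{Q}_{k,a}),
\end{equation*}
where each summand has mean zero (the second by conditioning each $P_{n,k} H_a(\hat{Q}_{k,a})$ on $\hat{Q}_{k,a}$ and applying $P_* H_a(\hat{Q}_{k,a}) = 0$ from Lemma~\ref{lemma: H mean var}). Applying the triangle inequality for the $L_2(\Pr)$ norm to this decomposition, and then again across folds, gives
\begin{equation*}
    \sqrt{\Var(\hat{\psi}_a)} \leq \frac{\sigma_{\#,a}}{\sqrt{n}} + \sum_{k=1}^K \frac{|I_k|}{n} \sqrt{\Var(P_{n,k} H_a(\hat{Q}_{k,a}))}.
\end{equation*}
Each per-fold variance is handled exactly as in step two: conditioning on $\hat{Q}_{k,a}$ yields $\Var(P_{n,k} H_a(\hat{Q}_{k,a})) = |I_k|^{-1} \expect[P_* \{(1-\pi_*(a\mid\cdot))/\pi_*(a\mid\cdot)\}(\hat{Q}_{k,a} - Q_{\#,a})^2]$, giving the second stated bound. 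The versions under Condition~\ref{cond: positivity} then follow by bounding $(1-\pi_*(a\mid\cdot))/\pi_*(a\mid\cdot) \leq (1-\tau_\pi)/\tau_\pi$ pointwise.

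The main technical subtlety is that different folds' contributions share data through the nuisance fits (for instance $D_k$ is used both in $P_{n,k}\transform_a(\hat{Q}_{k,a})$ and, as out-of-fold data, in $\hat{Q}_{k',a}$ for $k' \neq k$), so the $\hat{\psi}_{k,a}$'s are not independent. The triangle inequality in $L_2$ is what makes the argument clean: it bypasses any need to compute or sign cross-fold covariances, at the modest price of giving an additive-in-$\sqrt{\Var}$ bound rather than an exact variance identity—which is exactly the shape of the claimed inequality.
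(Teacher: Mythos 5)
Your proposal is correct and follows essentially the same route as the paper: the decomposition $\hat{\psi}_{k,a}-\psi_{*,a} = (P_{n,k}-P_*)\transform_a(Q_{\#,a}) + P_{n,k}H_a(\hat{Q}_{k,a})$, the mean-zero and variance facts from Lemma~\ref{lemma: H mean var} via conditioning on $\hat{Q}_{k,a}$, and the $L_2$ triangle inequality across the two pieces and across folds, with Condition~\ref{cond: positivity} applied pointwise at the end. The only cosmetic difference is that you invoke the law of total variance to get the per-fold variance identity before applying the triangle inequality at the influence-function level, whereas the paper applies the triangle inequality directly to the empirical-process decomposition; the resulting bounds are identical.
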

\begin{proof}
    Note that $P_* \transform_a(Q) = \psi_{*,a}$ for any function $Q$. Thus,
    \begin{align*}
        \hat{\psi}_{k,a} &= P_{n,k} \transform_a(\hat{Q}_{k,a}) \\
        &= P_* \transform_a(Q_{\#,a}) + P_* \{ \transform_a(\hat{Q}_{k,a}) - \transform_a(Q_{\#,a}) \} + (P_{n,k}-P_*) \transform_a(Q_{\#,a}) + (P_{n,k}-P_*) \{ \transform_a(\hat{Q}_{k,a}) - \transform_a(Q_{\#,a}) \} \\
        &= \psi_{*,a} + (P_{n,k}-P_*) \transform_a(Q_{\#,a}) + (P_{n,k}-P_*) H_a(\hat{Q}_{k,a}), \\
        \hat{\psi}_a &= \frac{1}{n} \sum_{k=1}^K |I_k| \hat{\psi}_{k,a} = \psi_{*,a} + (P_n - P_*) \transform_a(Q_{\#,a}) + \sum_{k=1}^K \frac{|I_k|}{n} (P_{n,k}-P_*) H_a(\hat{Q}_{k,a}).
    \end{align*}
    Since $\hat{Q}_{k,a}$ and $P_{n,k}$ are based on independent data, by Lemma~\ref{lemma: H mean var}, $\expect [(P_{n,k}-P_*) H_a(\hat{Q}_{k,a})]=0$, and so $\expect[\hat{\psi}_{k,a}] = \expect[\hat{\psi}_a] = \psi_{*,a}$.

    To bound $\Var(\hat{\psi}_{k,a})$ and $\Var(\hat{\psi}_a)$, note that $\Var((P_{n,k}-P_*) \transform_a(Q_{\#,a})) = \sigma_{\#,a}^2/|I_k|$, $\Var((P_n-P_*) \transform_a(Q_{\#,a})) = \sigma_{\#,a}^2/n$, and, by Lemma~\ref{lemma: H mean var}, 
    $$\Var((P_{n,k}-P_*) H_a(\hat{Q}_{k,a})) = \expect[P_* H_a(\hat{Q}_{k,a})^2]/|I_k| = \frac{1}{|I_k|} \expect \left[ P_* \frac{1-\pi_*(a \mid \cdot)}{\pi_*(a \mid \cdot)} (\hat{Q}_{k,a} - Q_{\#,a})^2 \right].$$
    By triangle inequality for $L_2(P_*)$-norm,
    \begin{align*}
        \Var(\hat{\psi}_{k,a}) = \expect[(\hat{\psi}_{k,a}-\psi_{*,a})^2] &\leq \left\{ \sqrt{\Var[(P_{n,k}-P_*) \transform_a(Q_{\#,a})]} + \sqrt{\Var [ (P_{n,k}-P_*) H_a(\hat{Q}_{k,a}) ]} \right\}^2 \\
        &= \frac{1}{|I_k|} \left\{ \sigma_{\#,a} + \sqrt{\expect \left[ P_* \frac{1-\pi_*(a \mid \cdot)}{\pi_*(a \mid \cdot)} (\hat{Q}_{k,a} - Q_{\#,a})^2 \right]} \right\}^2, \\
        \Var(\hat{\psi}_a) = \expect[(\hat{\psi}_a-\psi_{*,a})^2] &\leq \left\{ \sqrt{\Var((P_n-P_*) \transform_a(Q_{\#,a}))} + \sum_{k=1}^K \sqrt{\Var \left( \frac{|I_k|}{n} (P_{n,k}-P_*) H_a(\hat{Q}_{k,a}) \right)} \right\}^2 \\
        &= \left\{ \frac{\sigma_{\#,a}}{\sqrt{n}} + \sum_{k=1}^K \frac{|I_k|}{n} \sqrt{\expect \left[ P_* \frac{1-\pi_*(a \mid \cdot)}{\pi_*(a \mid \cdot)} \frac{(\hat{Q}_{k,a} - Q_{\#,a})^2}{|I_k|} \right]} \right\}^2.
    \end{align*}    
    The bounds under Condition~\ref{cond: positivity} follow from the boundedness of $\{1-\pi_*(a \mid x)\}/\pi_*(a \mid x)$.
\end{proof}

The following lemma bounds the tail probability of $\hat{\sigma}_{k,a}^2$.
Recall the constants $\bar{\sigma}_a^2$ and $\bar{\rho}$ introduced below Condition~\ref{cond: higher moments}.

\begin{lemma} \label{lemma: bound CV sigma2 tail}
    Under Conditions~\ref{cond: 2nd moment} and \ref{cond: positivity}--\ref{cond: higher moments}, for any $k \in \{1,\ldots,K\}$, $a \in \{0,1\}$, $\lambda_1,\lambda_2 > 0$, and $\lambda_3 > |I_k| \Var(\hat{\psi}_{k,a})/n$,
    it holds that
    \begin{align*}
        &\Pr \left( |\hat{\sigma}_{k,a}^2 - \expect[\hat{\sigma}_{k,a}^2]| > \frac{n}{|I_k|} \sum_{j=1}^3 \lambda_j \right) \\
        &\leq 2 \exp \left\{ - \frac{n^2}{2 |I_k| \bar{m}} \lambda_1^2 \right\} + \const \frac{\bar{m}}{\underline{m}^{3/2} \sqrt{|I_k|}} + \bar{\hat{\mathcal{R}}}(\lambda_2) + \const \frac{\bar{\rho}_a}{\sigma_{*,a}^{3/2} \sqrt{|I_k|}} \\
        &\quad+ 2 \exp \left\{ -\frac{(\sqrt{2}-1)^2}{2} \frac{|I_k|}{\bar{\sigma}_a^2} \left\{ \left( \frac{n}{|I_k|} \lambda_3 - \Var(\hat{\psi}_{k,a}) \right) \wedge \frac{ \left( \frac{n}{|I_k|} \lambda_3 - \Var(\hat{\psi}_{k,a}) \right)^2}{|\psi_{*,a}|^2} \right\} \right\}
    \end{align*}
    where
    $$\bar{\hat{\mathcal{R}}}(\lambda_2) := 16 M^2 \frac{|I_k|^2}{n^2 \lambda_2^2} \left( \frac{1-\tau_\pi}{\tau_\pi} \right)^2 \expect \| \hat{Q}_{k,a}-Q_{\#,a} \|_2^2.$$
    Under Condition~\ref{cond: more CV tail}, $\bar{\hat{\mathcal{R}}}(\lambda_2)$ may be replaced by
    $$\bar{\hat{\mathcal{R}}}'(\lambda_2) := \hat{a}_2 n^{\hat{d}_2} \exp \left\{ - \hat{c}_2 \left( \frac{n \lambda_2}{|I_k| \hat{r}_2(n)} \right)^{\hat{q}_2} \right\} + \hat{b}_2 \sqrt{\log n / n}.$$
\end{lemma}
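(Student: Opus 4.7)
The plan is to decompose $\hat{\sigma}_{k,a}^2 - \expect[\hat{\sigma}_{k,a}^2]$ into three pieces whose tails can be controlled separately with thresholds $(n/|I_k|)\lambda_j$, and then close with a union bound. Using the identity $\hat{\sigma}_{k,a}^2 = P_{n,k}\transform_a(\hat{Q}_{k,a})^2 - \hat{\psi}_{k,a}^2$ together with the independence of $\hat{Q}_{k,a}$ from the fold-$k$ data (so that $\expect[P_{n,k}\transform_a(\hat{Q}_{k,a})^2] = \expect[P_*\transform_a(\hat{Q}_{k,a})^2]$), I would write
\[
\hat{\sigma}_{k,a}^2 - \expect[\hat{\sigma}_{k,a}^2] = \underbrace{(P_{n,k}-P_*)\transform_a(\hat{Q}_{k,a})^2}_{T_1} + \underbrace{\{P_*\transform_a(\hat{Q}_{k,a})^2 - \expect[P_*\transform_a(\hat{Q}_{k,a})^2]\}}_{T_2} - \underbrace{\{\hat{\psi}_{k,a}^2 - \expect[\hat{\psi}_{k,a}^2]\}}_{T_3},
\]
and handle $\{|T_j| > (n/|I_k|)\lambda_j\}$ for $j=1,2,3$ in turn.

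For $T_1$, conditionally on $\hat{Q}_{k,a}$ it is a mean-zero sample average of i.i.d.\ centered squared transforms whose variance lies in $[\underline{m},\bar{m}]$ and whose absolute third moment is bounded by $\bar{m}$, by Condition~\ref{cond: higher moments}. Applying the Berry-Esseen theorem together with the Gaussian tail inequality $2(1-\Phi(t))\le 2\exp(-t^2/2)$ delivers the first exponential term with exponent $n^2\lambda_1^2/(2|I_k|\bar{m})$ and the remainder $\const\,\bar{m}/(\underline{m}^{3/2}\sqrt{|I_k|})$; averaging over $\hat{Q}_{k,a}$ preserves the inequality. For $T_2$, which depends only on $\hat{Q}_{k,a}$ and has mean zero, Lemma~\ref{lemma: transform sqaure} yields the second-moment bound $16M^2\{(1-\tau_\pi)/\tau_\pi\}^2 \expect\|\hat{Q}_{k,a}-Q_{\#,a}\|_2^2$, so Chebyshev's inequality produces $\bar{\hat{\mathcal{R}}}(\lambda_2)$; under Condition~\ref{cond: more CV subgaussian} the subgaussian tail bound replaces Chebyshev and yields $\bar{\hat{\mathcal{R}}}'(\lambda_2)$.

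The main difficulty is $T_3$, since $\hat{\psi}_{k,a}^2$ is a squared quantity whose tail is not directly the tail of a sample mean. Using $\expect[\hat{\psi}_{k,a}^2] = \psi_{*,a}^2 + \Var(\hat{\psi}_{k,a})$ from Lemma~\ref{lemma: bound CV Var psi}, the event $|T_3|>(n/|I_k|)\lambda_3$ combined with the hypothesis $\lambda_3 > |I_k|\Var(\hat{\psi}_{k,a})/n$ forces $|\hat{\psi}_{k,a}^2 - \psi_{*,a}^2| > c$ with $c := (n/|I_k|)\lambda_3 - \Var(\hat{\psi}_{k,a}) > 0$. Lemma~\ref{lemma: square diff inequality} applied with $a=\hat{\psi}_{k,a}$, $b=\psi_{*,a}$ converts this into $|\hat{\psi}_{k,a}-\psi_{*,a}|^2 > (\sqrt{2}-1)^2(c \wedge c^2/\psi_{*,a}^2)$, which is exactly the quantity appearing inside the final exponential of the stated bound and also explains the origin of the hypothesis on $\lambda_3$. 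Conditional on $\hat{Q}_{k,a}$, $\hat{\psi}_{k,a}-\psi_{*,a}$ is a sample mean of $D_a(\hat{Q}_{k,a},\psi_{*,a})$ whose per-observation variance $P_* D_a(\hat{Q}_{k,a},\psi_{*,a})^2$ lies in $[\sigma_{*,a}^2,\bar{\sigma}_a^2]$ (the lower bound by the efficient-influence-function property) and whose absolute third moment is at most $\bar{\rho}_a$; a conditional Berry-Esseen argument combined with the Gaussian tail bound then supplies the desired exponential factor (governed by $\bar{\sigma}_a^2$) and the Berry-Esseen remainder $\const\,\bar{\rho}_a/(\sigma_{*,a}^{3/2}\sqrt{|I_k|})$. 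A final union bound over the three events, after taking expectation over $\hat{Q}_{k,a}$, assembles the claimed inequality.
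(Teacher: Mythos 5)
Your proposal is correct and takes essentially the same route as the paper's proof: the same three-term decomposition of $\hat{\sigma}_{k,a}^2 - \expect[\hat{\sigma}_{k,a}^2]$ followed by a union bound, with a conditional Berry--Esseen argument plus Gaussian tail bound for the empirical-average term (using the moment bounds in Condition~\ref{cond: higher moments}), Chebyshev (or the subgaussian tail under Condition~\ref{cond: more CV subgaussian}) via Lemma~\ref{lemma: transform sqaure} for the $P_*\transform_a(\hat{Q}_{k,a})^2$ term, and Lemma~\ref{lemma: square diff inequality} together with a conditional Berry--Esseen bound on $\hat{\psi}_{k,a}-\psi_{*,a}$ for the squared-estimator term. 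Your identification of the role of the hypothesis $\lambda_3 > |I_k|\Var(\hat{\psi}_{k,a})/n$ matches the paper's treatment exactly.
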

\begin{proof}
    Since $\hat{\sigma}_{k,a}^2 = P_{n,k} \transform_a(\hat{Q}_{k,a})^2 - \hat{\psi}_{k,a}^2$ and $\expect[\hat{\sigma}_{k,a}^2] = P_* \expect_{\hat{Q}_{k,a}}[\transform_a(\hat{Q}_{k,a})^2] - \expect[\hat{\psi}_{k,a}^2] = P_* \expect_{\hat{Q}_{k,a}}[\transform_a(\hat{Q}_{k,a})^2] - \psi_{*,a}^2 - \Var(\hat{\psi}_{k,a})$, by union bound,
    \begin{align*}
        &\Pr \left( |\hat{\sigma}_{k,a}^2 - \expect[\hat{\sigma}_{k,a}^2]| > \frac{n}{|I_k|} \sum_{j=1}^3 \lambda_j \right) \\
        &\leq \Pr \left( |(P_{n,k}-P_*) \transform_a(\hat{Q}_{k,a})^2| > \frac{n}{|I_k|} \lambda_1 \right) + \Pr \left( |P_* \{\transform_a(\hat{Q}_{k,a})^2 - \expect_{\hat{Q}_{k,a}}[\transform_a(\hat{Q}_{k,a})^2]\}| > \frac{n}{|I_k|} \lambda_2 \right) \\
        &\quad + \Pr \left( |\hat{\psi}_{k,a}^2 - \psi_{*,a}^2 - \Var(\hat{\psi}_{k,a})| > \frac{n}{|I_k|} \lambda_3 \right).
    \end{align*}
    I next bound the three terms on the right-hand side separately.

    \noindent\textbf{Term~1:} Let $\zeta := \sqrt{P_* \{\transform_a(\hat{Q}_{k,a})^2 - P_* \transform_a(\hat{Q}_{k,a})^2\}^2}$, $\xi := P_* | \transform_a(\hat{Q}_{k,a})^2 - P_* \transform_a(\hat{Q}_{k,a})^2 |^3$, and $\Phi$ denote the cumulative distribution function of standard Gaussian. By Condition~\ref{cond: higher moments}, $\Pr( \zeta \in [\underline{m}, \bar{m}] ) = \Pr(\xi \leq \bar{m}) = 1$. By Berry-Esseen Theorem and the exponential tail bound for Gaussian distribution,
    \begin{align*}
        &\Pr \left( |(P_{n,k}-P_*) \transform_a(\hat{Q}_{k,a})^2| > \frac{n}{|I_k|} \lambda_1 \right) \\
        &= \expect \left[ \Pr \left( \sqrt{|I_k|} \left| \frac{(P_{n,k}-P_*) \transform_a(\hat{Q}_{k,a})^2}{\zeta} \right| > \sqrt{|I_k|} \frac{n}{|I_k| \zeta} \lambda_1 \mid \hat{Q}_{k,a} \right) \right] \\
        &\leq \expect \left[ 2 \Phi \left( -\sqrt{|I_k|} \frac{n}{|I_k| \zeta} \lambda_1 \right) + \const \frac{\xi}{\zeta^3 \sqrt{|I_k|}} \right] \\
        &\leq \expect \left[ 2 \exp \left\{ -\frac{n^2}{2|I_k| \zeta^2} \lambda_1^2 \right\} + \const \frac{\xi}{\zeta^3 \sqrt{|I_k|}} \right] \\
        &\leq 2 \exp \left\{ - \frac{n^2}{2 |I_k| \bar{m}} \lambda_1^2 \right\} + \const \frac{\bar{m}}{\underline{m}^{3/2} \sqrt{|I_k|}}.
    \end{align*}

    \noindent\textbf{Term~2:} By Lemma~\ref{lemma: transform sqaure},
    \begin{align*}
        \expect[\{ P_* (\transform_a(\hat{Q}_{k,a})^2 - \expect_{\hat{Q}_{k,a}}[\transform_a(\hat{Q}_{k,a})^2]) \}^2] \leq 16M^2 \left( \frac{1-\tau_\pi}{\tau_\pi} \right)^2 \expect \| \hat{Q}_{k,a}-Q_{\#,a} \|_{2}^2.
    \end{align*}
    By Chebyshev's inequality, Term~2 is upper bounded by $\bar{\hat{\mathcal{R}}}(\lambda_2)$.
    If Condition~\ref{cond: more CV tail} holds, then Term~2 is upper bounded by $\bar{\hat{\mathcal{R}}}'(\lambda_2)$.

    \noindent\textbf{Term~3:} Term~3 is upper bounded by $\Pr \left( |\hat{\psi}_{k,a}^2 - \psi_{*,a}^2| > c \right)$ where $c := n \lambda_3/|I_k| - \Var(\hat{\psi}_{k,a}) > 0$.
    By Lemma~\ref{lemma: square diff inequality}, this probability is further upper bounded by
    $$\Pr \left( |\hat{\psi}_{k,a} - \psi_{*,a}| > (\sqrt{2}-1) \left\{ \sqrt{c} \wedge \frac{c}{|\psi_{*,a}|} \right\} \right),$$
    which, by Berry-Esseen Theorem and the exponential tail bound for Gaussian distribution, is further upper bounded by
    \begin{align*}
        &= \expect \left[ \Pr \left( \sqrt{|I_k|} \frac{\left| P_{n,k} D_a(\hat{Q}_{k,a},\psi_{*,a}) \right|}{\sqrt{P_* D_a(\hat{Q}_{k,a},\psi_{*,a})^2}} > (\sqrt{2}-1) \frac{\sqrt{|I_k|}}{\sqrt{P_* D_a(\hat{Q}_{k,a},\psi_{*,a})^2}} \left\{ \sqrt{c} \wedge \frac{c}{|\psi_{*,a}|} \right\} \mid \hat{Q}_{k,a} \right) \right] \\
        &\leq \expect \left[ 2\Phi \left( -(\sqrt{2}-1) \frac{\sqrt{|I_k|}}{\sqrt{P_* D_a(\hat{Q}_{k,a},\psi_{*,a})^2}} \left\{ \sqrt{c} \wedge \frac{c}{|\psi_{*,a}|} \right\} \right) + \const \frac{P_*|D_a(\hat{Q}_{k,a},\psi_{*,a})|^3}{\left\{ P_* D_a(\hat{Q}_{k,a},\psi_{*,a})^2 \right\}^{3/2} \sqrt{|I_k|}} \right] \\
        &\leq \expect \left[ 2 \exp \left\{ -\frac{(\sqrt{2}-1)^2}{2} \frac{|I_k|}{P_* D_a(\hat{Q}_{k,a},\psi_{*,a})^2} \left( c \wedge \frac{c^2}{|\psi_{*,a}|^2} \right) \right\} + \const \frac{P_*|D_a(\hat{Q}_{k,a},\psi_{*,a})|^3}{\left\{ P_* D_a(\hat{Q}_{k,a},\psi_{*,a})^2 \right\}^{3/2} \sqrt{|I_k|}} \right] \\
        &\leq 2 \exp \left\{ -\frac{(\sqrt{2}-1)^2}{2} \frac{|I_k|}{\bar{\sigma}_a^2} \left( c \wedge \frac{c^2}{|\psi_{*,a}|^2} \right) \right\} + \const \frac{\bar{\rho}_a}{\sigma_{*,a}^{3/2} \sqrt{|I_k|}}.
    \end{align*}
    In the last step, the following fact was used: $P_* D_a(\hat{Q}_{k,a},\psi_{*,a})^2 \geq \sigma_{*,a}^2 > 0$ by Condition~\ref{cond: 2nd moment}, because $\sigma_{*,a}^2$ is the efficiency bound for estimating $\psi_{*,a}$ and $D_a(\hat{Q}_{k,a},\psi_{*,a})$ is an influence function that might not be efficient.
\end{proof}

\begin{proof}[Proof of Theorem~\ref{thm: CV expect Var}]
    By Lemma~\ref{lemma: transform sqaure},
    \begin{align*}
        &\sigma_{\dagger,a}^2 - \sigma_{\#,a}^2 \\
        &= \sum_{k=1}^K \frac{|I_k|}{n} \left\{ \expect[\hat{\sigma}_{k,a}^2] - \sigma_{\#,a}^2 \right\} \\
        &= \sum_{k=1}^K \frac{|I_k|}{n} \left\{ \expect[P_{n,k} \transform_a(\hat{Q}_{k,a})^2 - \hat{\psi}_{k,a}^2] - P_* \transform_a(Q_{\#,a})^2 + \psi_{*,a}^2 \right\} \\
        &= \sum_{k=1}^K \frac{|I_k|}{n} \Bigg\{ \expect \left[ P_* \frac{1-\pi_*(a \mid \cdot)}{\pi_*(a \mid \cdot)} (\hat{Q}_{k,a} - Q_{\#,a})^2 \right] + 2 \expect \left[ P_* \frac{1-\pi_*(a \mid \cdot)}{\pi_*(a \mid \cdot)} (\hat{Q}_{k,a} - Q_{\#,a}) (Q_{\#,a} - Q_{*,a}) \right] \\
        &\qquad\qquad- \Var(\hat{\psi}_{k,a}) \Bigg\}.
    \end{align*}
    Thus, \eqref{eq: CV Var bias} holds. Eq.~\ref{eq: ATE CV Var bias} can be shown similarly.
    
    Under Condition~\ref{cond: positivity}, by the triangle inequality, Cauchy-Schwarz inequality, and Lemma~\ref{lemma: bound CV Var psi}, $|\sigma_{\dagger,a}^2 - \sigma_{\#,a}^2| \leq \sum_{k=1}^K \hat{S}_k$.
    Under Condition~\ref{cond: CV common mean Q or consistent Q}, $|\sigma_{\dagger,a}^2 - \sigma_{\#,a}^2| \leq \sum_{k=1}^K \hat{S}_k'$ under Condition~\ref{cond: positivity} by Lemma~\ref{lemma: bound CV Var psi}.
    The bounds for $|\sigma_{\dagger,a} - \sigma_{\#,a}|$ follow from the fact that $|\sigma_{\dagger,a} - \sigma_{\#,a}| = |\sigma_{\dagger,a}^2 - \sigma_{\#,a}^2|/(\sigma_{\dagger,a} + \sigma_{\#,a}) \leq |\sigma_{\dagger,a}^2 - \sigma_{\#,a}^2|/\sigma_{\#,a}$, where $\sigma_{\#,a} \geq \sigma_{*,a} > 0$ by Condition~\ref{cond: 2nd moment} and the fact that $\sigma_{*,a}^2$ is the efficiency bound.
\end{proof}

The following lemma bounds the difference between $\sqrt{n} (\hat{\psi}_a-\psi_{*,a}) - z_\alpha \hat{\sigma}_a$ and $\sqrt{n} P_n D_a(Q_{\#,a},\psi_{*,a}) - z_\alpha \sigma_{\dagger,a}$, which involves the linear approximation to $\hat{\psi}_a-\psi_{*,a}$ and the approximation $\sigma_{\dagger,a}$ to $\hat{\sigma}_a$.
\begin{lemma} \label{lemma: bound CV linear approx}
    Recall $\bar{\hat{\mathcal{R}}}$ and $\bar{\hat{\mathcal{R}}}'$ defined in Lemma~\ref{lemma: bound CV sigma2 tail}. 
    Suppose that $\sigma_{\dagger,a}>0$.
    Under Conditions~\ref{cond: 2nd moment}--\ref{cond: higher moments}, for any constants $\eta_k, \lambda_{k,j} > 0$ and $\lambda_{k,3} > \frac{z_\alpha |I_k| \Var(\hat{\psi}_{k,a})}{2 \sigma_{*,a} n}$ ($k=1,\ldots,K$, $j=1,2$), with $\bar{\eta} := \sum_{k=1}^K \eta_k + \sum_{k=1}^K \sum_{j=1}^3 \lambda_{k,j}$,
    \begin{align*}
        &\Pr \left( |\sqrt{n} \{ \hat{\psi}_a - \psi_{*,a} - P_n D_a(Q_{\#,a},\psi_{*,a}) \} - z_\alpha (\hat{\sigma}_a - \sigma_{\dagger,a})| > \bar{\eta} \right) \\
        &\leq \sum_{k=1}^K \Bigg[ \hat{\hat{\mathcal{R}}}(\eta_k,\lambda_{k,2}) + 2 \ind(z_\alpha \neq 0) \exp \left\{ - \frac{n^2 \sigma_{\dagger,a}^2}{2 |I_k| \bar{m} z_\alpha^2} \lambda_{k,1}^2 \right\} + \const \frac{\bar{m}}{\underline{m}^{3/2} \sqrt{|I_k|}} + \const \ind(z_\alpha \neq 0) \frac{\bar{\rho}_a}{\sigma_{*,a}^{3/2} \sqrt{|I_k|}} \nonumber \\
        &\qquad+ 2 \ind(z_\alpha \neq 0) \exp \left\{ -\frac{(\sqrt{2}-1)^2}{2} \frac{|I_k|}{\bar{\sigma}_a^2} \left[ \left( \frac{n \sigma_{\dagger,a}}{|I_k| |z_\alpha|} \lambda_{k,3} - \Var(\hat{\psi}_{k,a}) \right) \wedge \frac{ \left( \frac{n \sigma_{\dagger,a}}{|I_k| |z_\alpha|} \lambda_{k,3} - \Var(\hat{\psi}_{k,a}) \right)^2}{|\psi_{*,a}|^2} \right] \right\} \Bigg]
    \end{align*}
    where $\hat{\hat{\mathcal{R}}}$ is defined pointwise as
    \begin{align*}
        \hat{\hat{\mathcal{R}}}(\eta,\lambda_2) &:= \frac{|I_k|}{n} \frac{1-\tau_\pi}{\tau_\pi \eta^2} \expect \| \hat{Q}_{k,a} - Q_{\#,a} \|_{2}^2 + \ind(z_\alpha \neq 0) \bar{\hat{\mathcal{R}}} \left( \frac{\sigma_{\dagger,a}}{|z_\alpha|} \lambda_2 \right).
    \end{align*}
    Additionally under Conditions~\ref{cond: CV tail} and \ref{cond: more CV tail}, for any constant $q>0$ and all $k=1,\ldots,K$, $\hat{\hat{\mathcal{R}}}(\eta_k,\lambda_{k,2})$ can be replaced by $\hat{\hat{\mathcal{R}}}'(\eta_k,\lambda_{k,2},q)$, which is defined pointwise as
    \begin{align*}
        \hat{\hat{\mathcal{R}}}'(\eta,\lambda_2,q) &:= 2 \exp(-\eta')+ 2 \hat{a}_1 n^{-q} + 2 \hat{b}_1 \sqrt{\log n / n} + \ind(z_\alpha \neq 0) \bar{\hat{\mathcal{R}}}' \left( \frac{\sigma_{\dagger,a}}{|z_\alpha|} \lambda_2 \right).
    \end{align*}
    where $\eta'=\eta'(\eta)$ is defined in Lemma~\ref{lemma: CV empirical process}.
\end{lemma}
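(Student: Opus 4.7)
The plan is to bound the target quantity by a triangle-inequality split and then apply the previously-established fold-wise tail bounds (Lemmas~\ref{lemma: CV empirical process} and \ref{lemma: bound CV sigma2 tail}) via a union bound over folds. Concretely, using the decomposition derived in the proof of Lemma~\ref{lemma: bound CV Var psi}, namely
\begin{equation*}
\hat{\psi}_a - \psi_{*,a} - P_n D_a(Q_{\#,a},\psi_{*,a}) = \sum_{k=1}^K \frac{|I_k|}{n} P_{n,k} H_a(\hat{Q}_{k,a}),
\end{equation*}
(since $P_* H_a(\hat{Q}_{k,a}) = 0$ by Lemma~\ref{lemma: H mean var} and $P_n D_a(Q_{\#,a},\psi_{*,a}) = (P_n-P_*) \transform_a(Q_{\#,a})$), the quantity inside the probability is bounded by $T_1 + T_2$, where $T_1 := \sum_k |I_k| |P_{n,k} H_a(\hat{Q}_{k,a})|/\sqrt{n}$ and $T_2 := |z_\alpha|\,|\hat{\sigma}_a - \sigma_{\dagger,a}|$. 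Hence $\{T_1 + T_2 > \bar\eta\} \subseteq \{T_1 > \sum_k \eta_k\} \cup \{T_2 > \sum_k \sum_j \lambda_{k,j}\}$.

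For the linearization term, if $T_1 > \sum_k \eta_k$ then by pigeonhole there exists $k$ with $|I_k|\,|P_{n,k} H_a(\hat{Q}_{k,a})| > \sqrt{n}\,\eta_k$. Applying Lemma~\ref{lemma: CV empirical process} to each fold and taking a union bound produces the term $|I_k|(1-\tau_\pi)/(n \tau_\pi \eta_k^2)\,\expect\|\hat{Q}_{k,a}-Q_{\#,a}\|_2^2$ (without subgaussian conditions) or the Hoeffding-type exponential bound (under Condition~\ref{cond: CV subgaussian}), which are exactly the first pieces of $\hat{\hat{\mathcal{R}}}(\eta_k,\lambda_{k,2})$ and $\hat{\hat{\mathcal{R}}}'(\eta_k,\lambda_{k,2},q)$, respectively.

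For the variance-estimator term, the identity $\hat{\sigma}_a^2 - \sigma_{\dagger,a}^2 = \sum_k \frac{|I_k|}{n}(\hat{\sigma}_{k,a}^2 - \expect[\hat{\sigma}_{k,a}^2])$ combined with the factorization $|\hat{\sigma}_a - \sigma_{\dagger,a}| \leq |\hat{\sigma}_a^2 - \sigma_{\dagger,a}^2|/\sigma_{\dagger,a}$ (valid because $\sigma_{\dagger,a} > 0$) gives
\begin{equation*}
T_2 \leq \frac{|z_\alpha|}{\sigma_{\dagger,a}} \sum_{k=1}^K \frac{|I_k|}{n}\,|\hat{\sigma}_{k,a}^2 - \expect[\hat{\sigma}_{k,a}^2]|.
\end{equation*}
Thus $T_2 > \sum_k\sum_j \lambda_{k,j}$ implies, again by pigeonhole, the existence of $k$ with $|\hat{\sigma}_{k,a}^2 - \expect[\hat{\sigma}_{k,a}^2]| > (n/|I_k|)\sum_j \lambda'_{k,j}$ for $\lambda'_{k,j} := (\sigma_{\dagger,a}/|z_\alpha|)\lambda_{k,j}$. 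Invoking Lemma~\ref{lemma: bound CV sigma2 tail} with these rescaled thresholds produces the three terms associated with $\lambda_{k,1},\lambda_{k,2},\lambda_{k,3}$ in the stated bound (with the $\ind(z_\alpha \neq 0)$ prefactors because the whole term is only relevant when $z_\alpha \neq 0$; when $z_\alpha = 0$, $T_2 \equiv 0$ and the variance-related contributions drop out). The subgaussian variant follows by replacing the Chebyshev-based component with the corresponding exponential bound $\bar{\hat{\mathcal{R}}}'$.

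The main obstacle is bookkeeping rather than conceptual: one must verify that the rescaling $\lambda_{k,j} \mapsto (\sigma_{\dagger,a}/|z_\alpha|)\lambda_{k,j}$ carries the side-condition on $\lambda_{k,3}$ required by Lemma~\ref{lemma: bound CV sigma2 tail} (namely $\lambda'_{k,3} > |I_k|\Var(\hat{\psi}_{k,a})/n$) to the stated hypothesis on $\lambda_{k,3}$, keep careful track of the factor $\sigma_{\dagger,a}/|z_\alpha|$ inside the exponentials so that the final arguments match $\tfrac{n\sigma_{\dagger,a}}{|I_k||z_\alpha|}\lambda_{k,3}$, and handle the degenerate case $z_\alpha = 0$ uniformly so that no division by zero occurs. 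Once this rescaling is in place, the stated bound follows by summing the fold-wise bounds via the union bound.
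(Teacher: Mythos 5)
Your proposal follows essentially the same route as the paper's proof: the same decomposition $\sqrt{n}\{\hat{\psi}_a-\psi_{*,a}-P_n D_a(Q_{\#,a},\psi_{*,a})\}=n^{-1/2}\sum_k |I_k| P_{n,k}H_a(\hat{Q}_{k,a})$, a union bound over the two terms and over folds, Lemma~\ref{lemma: CV empirical process} for the empirical-process part, and the passage from $|\hat{\sigma}_a-\sigma_{\dagger,a}|$ to $|\hat{\sigma}_a^2-\sigma_{\dagger,a}^2|$ via $\hat{\sigma}_a+\sigma_{\dagger,a}\geq\sigma_{\dagger,a}>0$ followed by Lemma~\ref{lemma: bound CV sigma2 tail} with the thresholds rescaled by $\sigma_{\dagger,a}/|z_\alpha|$, with the $z_\alpha=0$ case handled by noting the variance term vanishes. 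This matches the paper's argument, so the proposal is correct in approach and substance.
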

\begin{proof}
    Note that $\sqrt{n} \{ \hat{\psi}_a - \psi_{*,a} - P_n D_a(Q_{\#,a},\psi_{*,a}) \} = n^{-1/2} \sum_{k=1}^K |I_k| P_{n,k} H_a(\hat{Q}_{k,a})$. By union bound,
    \begin{align*}
        &\Pr \left( |\sqrt{n} \{ \hat{\psi}_a - \psi_{*,a} - P_n D_a(Q_{\#,a},\psi_{*,a}) \} - z_\alpha (\hat{\sigma}_a - \sigma_{\dagger,a})| > \sum_{k=1}^K \eta_k + \sum_{k=1}^K \sum_{j=1}^3 \lambda_{k,j} \right) \\
        &\leq \sum_{k=1}^K \Pr \left( |I_k| |P_{n,k} H_a(\hat{Q}_{k,a})| > n^{1/2} \eta_k \right) + \ind(z_\alpha \neq 0) \Pr \left( |\hat{\sigma}_a - \sigma_{\dagger,a}| > \sum_{k=1}^K \sum_{j=1}^3 \lambda_{k,j} / |z_\alpha| \right).
    \end{align*}
    Here, $\Pr \left( |I_k| |P_{n,k} H_a(\hat{Q}_{k,a})| > n^{1/2} \eta_k \right)$ can be bounded as in Lemma~\ref{lemma: CV empirical process}.
    Since $\sigma_{\dagger,a} + \hat{\sigma}_a \geq \sigma_{\dagger,a} > 0$,
    if $z_\alpha \neq 0$,
    \begin{align*}
        & \Pr \left( |\hat{\sigma}_a - \sigma_{\dagger,a}| > \sum_{k=1}^K \sum_{j=1}^3 \lambda_{k,j} / |z_\alpha| \right) \\
        &\leq \Pr \left( |\hat{\sigma}_a^2 - \sigma_{\dagger,a}^2| > \sum_{k=1}^K \sum_{j=1}^3 \lambda_{k,j} \sigma_{\dagger,a} / |z_\alpha| \right) \\
        &= \Pr \left( \left| \frac{1}{n} \sum_{k=1}^K |I_k| (\hat{\sigma}_{k,a}^2 - \expect[\hat{\sigma}_{k,a}^2]) \right| > \sum_{k=1}^K \sum_{j=1}^3 \lambda_{k,j} \sigma_{\dagger,a} / |z_\alpha| \right) \\
        &\leq \sum_{k=1}^K \Pr \left( |\hat{\sigma}_{k,a}^2 - \expect[\hat{\sigma}_{k,a}^2]| > \frac{n}{|I_k|} \sum_{j=1}^3 \frac{\lambda_{k,j} \sigma_{\dagger,a}}{|z_\alpha|} \right),
    \end{align*}
    which can be bounded as in Lemma~\ref{lemma: bound CV sigma2 tail}.
\end{proof}

The above results yield Theorem~\ref{thm: CV BE bound}.
\begin{proof}[Proof of Theorem~\ref{thm: CV BE bound}]
    Taking $\eta$ in Lemmas~\ref{lemma: AIPW delta} and \ref{lemma: normal quantile} to be $\sum_{k=1}^K \eta_k + \sum_{k=1}^K \sum_{j=1}^3 \lambda_{k,j}$, by Lemma~\ref{lemma: bound CV linear approx}, it holds that
    \begin{align}
        &\left| \Pr(\sqrt{n} (\hat{\psi}_a - \psi_{*,a}) \leq z_\alpha \hat{\sigma}_a) - (1-\alpha) - \phi(z_\alpha) z_\alpha \frac{\sigma_{\dagger,a}-\sigma_{\#,a}}{\sigma_{\#,a}} \right| \nonumber \\
        &\leq \const \frac{\rho_{\#,a}}{\sigma_{\#,a}^3 \sqrt{n}} + \const \frac{(\sigma_{\dagger,a}-\sigma_{\#,a})^2}{\sigma_{\#,a}^2} + \const \frac{\sum_{k=1}^K \eta_k + \sum_{k=1}^K \sum_{j=1}^3 \lambda_{k,j}}{\sigma_{\#,a}} \nonumber \\
        &\quad+ \sum_{k=1}^K \Bigg[ \hat{\hat{\mathcal{R}}}(\eta_k,\lambda_{k,2}) + 2 \ind(z_\alpha \neq 0) \exp \left\{ - \frac{n^2 \sigma_{\dagger,a}^2}{2 |I_k| \bar{m} z_\alpha^2} \lambda_{k,1}^2 \right\} + \const \frac{\bar{m}}{\underline{m}^{3/2} \sqrt{|I_k|}} + \const \ind(z_\alpha \neq 0) \frac{\bar{\rho}_a}{\sigma_{*,a}^{3/2} \sqrt{|I_k|}} \nonumber \\
        &\quad+ 2 \ind(z_\alpha \neq 0) \exp \left\{ -\frac{(\sqrt{2}-1)^2}{2} \frac{|I_k|}{\bar{\sigma}_a^2} \left[ \left( \frac{n \sigma_{\dagger,a}}{|I_k| |z_\alpha|} \lambda_{k,3} - \Var(\hat{\psi}_{k,a}) \right) \wedge \frac{ \left( \frac{n \sigma_{\dagger,a}}{|I_k| |z_\alpha|} \lambda_{k,3} - \Var(\hat{\psi}_{k,a}) \right)^2}{|\psi_{*,a}|^2} \right] \right\} \Bigg]. \label{eq: CV BE rough bound}
    \end{align}
    
    Take
    \begin{align*}
        & \eta_k = \left\{ \frac{|I_k|}{n} \frac{1-\tau_\pi}{\tau_\pi} \sigma_{\#,a} \expect \|\hat{Q}_{k,a} - Q_{\#,a} \|_2^2 \right\}^{1/3}, \quad \lambda_{k,1} = \begin{cases}
            \frac{|z_\alpha| \sqrt{2 q |I_k| \bar{m} \log n}}{n \sigma_{\dagger,a}} & (\text{if } z_\alpha \neq 0) \\
            n^{-q} & (\text{if } z_\alpha = 0)
        \end{cases},\\
        & \lambda_{k,2} = \left\{ 16 M^2 \frac{|I_k|^2 z_\alpha^2 \sigma_{\#,a}}{n^2 \sigma_{\dagger,a}^2} \left( \frac{1-\tau_\pi}{\tau_\pi} \right)^2 \expect \|\hat{Q}_{k,a} - Q_{\#,a} \|_2^2 \right\}^{1/3}, \\
        & \lambda_{k,3} = \begin{cases}
            \frac{|I_k| |z_\alpha|}{n \sigma_{\dagger,a}} \left\{ \sqrt{2} (\sqrt{2}+1) |\psi_{*,a}| \sqrt{q \frac{\bar{\sigma}_a^2}{|I_k|} \log n} + \Var(\hat{\psi}_{k,a}) \right\} & (\text{if } z_\alpha \neq 0, \psi_{*,a} \neq 0) \\
            \frac{|I_k| |z_\alpha|}{n \sigma_{\dagger,a}} \left\{ 2 (\sqrt{2}+1)^2 q \frac{\bar{\sigma}_a^2}{|I_k|} \log n + \Var(\hat{\psi}_{k,a}) \right\} & (\text{if } z_\alpha \neq 0, \psi_{*,a}=0) \\
            n^{-q} & (\text{if } z_\alpha=0)
        \end{cases}
    \end{align*}
    in \eqref{eq: CV BE rough bound} so that
    $$\frac{n \sigma_{\dagger,a}}{|I_k| |z_\alpha|} \lambda_{k,3} - \Var(\hat{\psi}_{k,a}) \geq \frac{ \left( \frac{n \sigma_{\dagger,a}}{|I_k| |z_\alpha|} \lambda_{k,3} - \Var(\hat{\psi}_{k,a}) \right)^2}{|\psi_{*,a}|^2} \text{ when } \psi_{*,a} \neq 0, z_\alpha \neq 0.$$
    By Lemma~\ref{lemma: bound CV Var psi} and Theorem~\ref{thm: CV expect Var}, \eqref{eq: CV BE bound} holds.

    Additionally under Conditions~\ref{cond: CV tail} and \ref{cond: more CV tail}, by Lemma~\ref{lemma: bound CV linear approx}, \eqref{eq: CV BE rough bound} holds with $\hat{\hat{\mathcal{R}}}(\eta_k,\lambda_{k,2})$ replaced by $\hat{\hat{\mathcal{R}}}'(\eta_k,\lambda_{k,2},q)$.
    Taking
    \begin{align*}
        & \eta_k = \left\{ 2 [(q+\hat{d}_1) \log n]^{(\hat{q}_1+2)/(2 \hat{q}_1)} \hat{c}_1^{-1/\hat{q}_1} \hat{r}_1(n) \sqrt{\frac{\tau_\pi}{1-\tau_\pi} \frac{|I_k|}{n}} \right\} \vee \left\{ \frac{8}{3} M \frac{1-\tau_\pi}{\tau_\pi} \frac{(q+\hat{d}_1) \log n}{\sqrt{n}} \right\}, \\
        & \lambda_{k,2} = \left[ \frac{(q+\hat{d}_2) \log n}{\hat{c}_2}\right]^{1/\hat{q}_2} \frac{|I_k| |z_\alpha|}{n \sigma_{\dagger,a}} \hat{r}_2(n),
    \end{align*}
    and $(\lambda_{k,1},\lambda_{k,3})$ the same as above,
    so that $\eta'(\eta_k) \geq (q+\hat{d}_1) \log n$,
    by Lemma~\ref{lemma: bound CV Var psi} and Theorem~\ref{thm: CV expect Var}, 
    \eqref{eq: CV BE bound} holds with $\mathcal{R}_k$ replaced by $\hat{\mathcal{R}}_k'$.
\end{proof}

\subsection{Results for non-cross-fit AIPW procedure} \label{sec: non CV proof}

For any function class $\gfunclass$ and any $\delta>0$, define $\gfunclass_\delta := \{ f \in \gfunclass: \|f\|_2 \leq \delta\}$.
Let $\empro_n$ denote the empirical process, so $\empro_n f = \sqrt{n} (P_n - P_*) f$ for any function $f \in L_2(P_*)$, and $\| \empro_n \|_\gfunclass := \sup_{f \in \gfunclass} |\empro_n f|$ for any function class $\gfunclass \subseteq L_2(P_*)$.
Let $\| \cdot \|_{2,\mu}$ denote the $L_2(\mu)$ norm for any measure $\mu$.
Define $\funclass^{H_a} := \{ H_a(Q): Q \in \funclass \}$, $\funclass^{\transform_a^2} := \{ \transform_a(Q)^2 - \transform_a(Q_{\#,a})^2: Q \in \funclass \}$, where $\funclass$ is introduced in Condition~\ref{cond: donsker}.
Recall $F^{H_a}$ and $F^{\transform_a^2}$ defined above Theorem~\ref{thm: BE bound}.

\begin{lemma}[Bounds on covering numbers] \label{lemma: covering number}
    Suppose that Conditions~\ref{cond: positivity}, \ref{cond: bounded Q}, and \ref{cond: donsker} hold.
    For any $a \in \{0,1\}$, $\funclass^{H_{a}}$ and $\funclass^{\transform_{a}^2}$ have constant envelopes $F^{H_a}$ and $F^{\transform_a^2}$, respectively.
    Moreover, for any probability measure $\mu$,
    \begin{align*}
        \| H_{a'}(Q_1) - H_{a'}(Q_2) \|_{2,\mu} &\leq \frac{1 - \tau_\pi}{\tau_\pi} \| Q_1-Q_2 \|_{2,\mu}, \\
        \| \transform_{a'}(Q_1)^2 - \transform_{a'}(Q_2)^2 \|_{2,\mu} &\leq 4 M \frac{1-\tau_\pi}{\tau_\pi^2} \| Q_1-Q_2 \|_{2,\mu},
    \end{align*}
    so both $N (\epsilon F^{H_a}, \funclass^{H_a}, L_2(\mu))$ and $N(\epsilon F^{\transform_a^2}, \funclass^{\transform_a^2}, L_2(\mu))$ are upper bounded by $N(\epsilon \cdot 2M, \funclass,L_2(\mu))$.
    Thus, both $J(\delta,\funclass^{H_{a}},F^{H_a})$ and $J(\delta, \funclass^{\transform_{a}^2},\funclass^{\transform_a^2})$ are upper bounded by $J(2 \delta, \funclass,M)/2$ for any $\delta>0$.
\end{lemma}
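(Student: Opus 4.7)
The plan is to verify the envelope, Lipschitz, covering-number, and entropy-integral claims in sequence; the first two are pointwise calculations, and the remaining two are standard consequences. Write $W := \ind(A=a')/\pi_*(a' \mid X)$, so that $\transform_{a'}(Q) = (1-W) Q(X) + W Y$ and $H_{a'}(Q) = (1-W)(Q(X) - Q_{\#,a'}(X))$. Under Condition~\ref{cond: positivity}, $|1-W|$ equals $1$ when $A \neq a'$ and $(1-\pi_*(a' \mid X))/\pi_*(a' \mid X) \leq (1-\tau_\pi)/\tau_\pi$ when $A = a'$, so the pointwise estimate $|1-W| \leq (1-\tau_\pi)/\tau_\pi$ (using $\tau_\pi \leq 1/2$, which makes $(1-\tau_\pi)/\tau_\pi \geq 1$) holds everywhere. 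Combined with the uniform bounds $|Y|, |Q|, |Q_{\#,a'}| \leq M$ from Conditions~\ref{cond: bounded Q} and \ref{cond: donsker}, this immediately gives the envelope $|H_{a'}(Q)| \leq 2M (1-\tau_\pi)/\tau_\pi = F^{H_a}$, and, by the same argument applied to $Q_1 - Q_2$ in place of $Q - Q_{\#,a'}$, the pointwise Lipschitz inequality $|H_{a'}(Q_1)(v) - H_{a'}(Q_2)(v)| \leq \{(1-\tau_\pi)/\tau_\pi\} |Q_1(x) - Q_2(x)|$; integrating against $\mu$ then yields the first $L_2(\mu)$-Lipschitz bound.

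For the second class, the plan is to factor $\transform_{a'}(Q_1)^2 - \transform_{a'}(Q_2)^2 = [H_{a'}(Q_1) - H_{a'}(Q_2)] \cdot [\transform_{a'}(Q_1) + \transform_{a'}(Q_2)]$ and to estimate each factor separately. Writing the sum as $(1-W)(Q_1(X) + Q_2(X)) + 2 W Y$ and splitting on $A = a'$ (bounding by $(2M/\pi_*(a' \mid X))(2 - \pi_*(a' \mid X)) \leq 4M/\tau_\pi$) versus $A \neq a'$ (bounding by $2M$) gives $|\transform_{a'}(Q_1) + \transform_{a'}(Q_2)| \leq 4M/\tau_\pi$ pointwise. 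Multiplying by the $H_{a'}$-Lipschitz estimate yields $|\transform_{a'}(Q_1)^2 - \transform_{a'}(Q_2)^2| \leq 4 M (1-\tau_\pi)/\tau_\pi^2 \cdot |Q_1(x) - Q_2(x)|$ pointwise, and integration gives the second $L_2(\mu)$-Lipschitz inequality; specialising to $Q_2 = Q_{\#,a'}$ and using $|Q - Q_{\#,a'}| \leq 2M$ then produces the envelope $F^{\transform_a^2} = 8 M^2 (1-\tau_\pi)/\tau_\pi^2$.

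The covering-number comparisons are now immediate: any $(2M \epsilon)$-cover $\{Q_1, \ldots, Q_N\}$ of $\funclass$ in $L_2(\mu)$ maps, via the two Lipschitz inequalities, to an $(F^{H_a} \epsilon)$-cover $\{H_a(Q_i)\}$ of $\funclass^{H_a}$ and an $(F^{\transform_a^2} \epsilon)$-cover $\{\transform_a(Q_i)^2\}$ of $\funclass^{\transform_a^2}$ in $L_2(\mu)$, respectively. For the entropy-integral bounds, I would apply these covering-number comparisons inside the supremum over $\mu$ in the definition of $J(\delta, \funclass^{H_a}, F^{H_a})$ and $J(\delta, \funclass^{\transform_a^2}, F^{\transform_a^2})$, and then substitute $\epsilon' = 2 \epsilon$ in the defining integral for $J(2\delta, \funclass, M)$; the Jacobian supplies the factor $1/2$ and delivers the asserted inequalities. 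The only mildly delicate step in the whole proof is the case split producing the $4M/\tau_\pi$ bound on $|\transform_{a'}(Q_1) + \transform_{a'}(Q_2)|$, which relies on the arithmetic estimate $(2-\pi)/\pi \leq 2/\tau_\pi$ (valid because $\pi \geq \tau_\pi$), but no conceptual obstacle arises.
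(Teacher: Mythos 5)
Your proposal is correct and follows essentially the same route as the paper: pointwise envelope and Lipschitz bounds for $H_{a'}$ and $\transform_{a'}(\cdot)^2$, then the covering-number comparison with $N(2M\epsilon,\funclass,L_2(\mu))$ and the change of variables $\epsilon'=2\epsilon$ giving the factor $1/2$ in the entropy integrals. The only (harmless) difference is that you obtain the Lipschitz constant $4M(1-\tau_\pi)/\tau_\pi^2$ via the factorization $\transform(Q_1)^2-\transform(Q_2)^2=\{\transform(Q_1)-\transform(Q_2)\}\{\transform(Q_1)+\transform(Q_2)\}$, whereas the paper uses the explicit expansion in Lemma~\ref{lemma: transform sqaure}; your route in fact recovers the stated envelope $F^{\transform_a^2}=8M^2(1-\tau_\pi)/\tau_\pi^2$ exactly.
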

\begin{proof}
    First, show the validity of envelopes. Let $a' \in \{0,1\}$.
    \begin{align*}
        |H_{a'}(Q)(v)| &= \left| \left\{ 1 - \frac{\ind(a=a')}{\pi_*(a' \mid x)} \right\}  \{ Q(x) - Q_{\#,a}(x) \} \right| \\
        &= \left\{ \ind(a=a') \frac{1-\pi_*(a' \mid x)}{\pi_*(a' \mid x)} + \ind(a \neq a') \right\} |Q(x) - Q_{\#,a}(x)| \\
        &\leq 2 M \frac{1-\tau_\pi}{\tau_\pi} = F^{H_{a'}}.
    \end{align*}
    By Lemma~\ref{lemma: transform sqaure},
    \begin{align*}
        &\transform_{a'}(Q)(v)^2 - \transform_{a'}(Q_{\#,a'})(v)^2 \\
        &= \left\{ -2 \frac{\ind(a=a') (1-\pi_*(a' \mid x))}{\pi_*(a' \mid x)^2} y + \left( \frac{\ind(a=a')}{\pi_*(a' \mid x)} - 1 \right)^2 (Q(x) + Q_{\#,a'}(x)) \right\} (Q(x) - Q_{\#,a'}(x)) \\
        &= \left[ \ind(a=a') \left\{ -2 \frac{1-\pi_*(a' \mid x)}{\pi_*(a' \mid x)} y + \left( \frac{1-\pi_*(a' \mid x)}{\pi_*(a' \mid x)} \right)^2 (Q(x) + Q_{\#,a'}(x)) \right\} + \ind(a \neq a') (Q(x) + Q_{\#,a'}(x)) \right] \\
        &\qquad\times (Q(x) - Q_{\#,a'}(x)),
    \end{align*}
    so
    \begin{align*}
        &\left| \transform_{a'}(Q)(v)^2 - \transform_{a'}(Q_{\#,a'})(v)^2 \right| \leq \left\{ 2 M \frac{1-\tau_\pi}{\tau_\pi} + 2 M \left( \frac{1-\tau_\pi}{\tau_\pi} \right)^2 \right\} \cdot 2M = F^{\transform_{a'}^2}.
    \end{align*}
    
    For $\epsilon>0$ and any functions $Q_1,Q_2 \in \funclass$,
    \begin{align*}
        \| H_{a'}(Q_1) - H_{a'}(Q_2) \|_{2,\mu} = \left[ \int \left\{ 1 - \frac{\ind(a=a')}{\pi_*(a' \mid x)} \right\}^2 \{ Q_1(x)-Q_2(x)\}^2 \intd \mu(x,a,y) \right]^{1/2} \leq \frac{1-\tau_\pi}{\tau_\pi} \| Q_1-Q_2 \|_{2,\mu},
    \end{align*}
    where the bound on $1 - \ind(a=a')/\pi_*(a' \mid x)$ is derived using a similar argument as in the proof of the envelope $F^{H_{a'}}$ of $\funclass^{H_{a'}}$.
    Thus, $N (\epsilon F^{H_{a'}}, \funclass^{H_{a'}}, L_2(\mu)) \leq N(\epsilon \cdot 2 M, \funclass, L_2(\mu))$.
    Similarly, for any functions $Q_1,Q_2 \in \funclass$, by Lemma~\ref{lemma: transform sqaure},
    \begin{align*}
        \| \transform_{a'}(Q_1)^2 - \transform_{a'}(Q_2)^2 \|_{2,\mu} \leq 4 M \frac{1-\tau_\pi}{\tau_\pi^2} \| Q_1-Q_2 \|_{2,\mu},
    \end{align*}
    so $N(\epsilon F^{\transform_{a'}^2}, \funclass^{\transform_{a'}^2}, L_2(\mu)) \leq N(\epsilon \cdot 2M, \funclass,L_2(\mu))$.
    The bounds for uniform entropy integrals follow from the above bounds on covering numbers and a change of variable.
\end{proof}

\begin{lemma} \label{lemma: rough empro tail bound}
    For any $\delta > 0$, $\alpha>0$, $q \geq 2$ and $t \geq 1$, under Conditions~\ref{cond: positivity}, \ref{cond: bounded Q} and \ref{cond: donsker},
    \begin{align*}
        & \max \left\{ \expect \| \empro_n \|_{\funclass^{H_a}_{\delta F^{H_a}}}, \expect \| \empro_n \|_{\funclass^{\transform_a^2}_{\delta F^{\transform_a^2}}} \right\} \leq \const F^{H_a} J(2 \delta, \funclass, M) + \const \frac{F^{H_a} J^2(2 \delta,\funclass,M)}{\delta^2 \sqrt{n}}, \\
        & \max \left\{ \expect \| \empro_n \|_{\funclass^{H_a}}, \expect \| \empro_n \|_{\funclass^{\transform_a^2}} \right\} \leq \const F^{H_a} J(2, \funclass, M),
    \end{align*}
    and, for sufficiently large absolute constant $\const$, the following three probabilities are all upper bounded by $t^{-q/2}$:
    \begin{align*}
        &\Pr \left( |\empro_n H_a(\tilde{Q}_a)| > (1+\alpha) \expect \| \empro_n \|_{\funclass^{H_a}_{\delta F^{H_a}}} + \const q \left\{ \left( \delta F^{H_a} + \frac{F^{H_a}}{\sqrt{n}} \right) \sqrt{t} + \frac{F^{H_a} t}{\alpha \sqrt{n}} \right\}, \| H_a(\tilde{Q}_a) \|_2 \leq \delta F^{H_a} \right), \\
        &\Pr \Big( |\empro_n \{\transform_a(\tilde{Q}_a)^2- \transform_a(Q_{\#,a})^2\}| > (1+\alpha) \expect \| \empro_n \|_{\funclass^{\transform_a^2}_{\delta F^{\transform_a^2}}} + \const q \left\{ \left( \delta F^{\transform_a^2} + \frac{F^{\transform_a^2}}{\sqrt{n}} \right) \sqrt{t} + \frac{F^{\transform_a^2} t}{\alpha \sqrt{n}} \right\}, \\
        &\qquad\qquad \| \transform_a(\tilde{Q}_a)^2- \transform_a(Q_{\#,a})^2 \|_2 \leq \delta F^{\transform_a^2} \Big), \\
        &\Pr \left( |\empro_n H_a(\tilde{Q}_a)| > (1+\alpha) \expect \| \empro_n \|_{\funclass^{H_a}} + \const q \left\{ F^{H_a} \sqrt{t} + \frac{F^{H_a} t}{\alpha \sqrt{n}} \right\} \right).
    \end{align*}
\end{lemma}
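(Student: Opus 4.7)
The plan is to combine the maximal inequality (Theorem 5.2 of Chernozhukov, Chetverikov, and Kato, 2014, hereafter CCK) for the expectations with a Talagrand-type concentration inequality in moment form (Theorem 5.1 of CCK, or equivalently Bousquet's or Klein--Rio's moment inequality) for the tails, together with the covering-number estimates already established in Lemma~\ref{lemma: covering number}.

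For the expectation bounds, applying Theorem~5.2 of CCK to a class $\gfunclass$ with constant envelope $F_\gfunclass$ and uniform $L_2$-radius $\sigma_\gfunclass := \sup_{f \in \gfunclass} \|f\|_2$ yields a bound of the form
\begin{equation*}
\expect \|\empro_n\|_\gfunclass \leq \const\left\{F_\gfunclass J\!\left(\tfrac{\sigma_\gfunclass}{F_\gfunclass},\gfunclass,F_\gfunclass\right) + \frac{F_\gfunclass J^2\!\left(\tfrac{\sigma_\gfunclass}{F_\gfunclass},\gfunclass,F_\gfunclass\right)}{(\sigma_\gfunclass/F_\gfunclass)^2 \sqrt n}\right\}.
\end{equation*}
For the restricted classes $\funclass^{H_a}_{\delta F^{H_a}}$ and $\funclass^{\transform_a^2}_{\delta F^{\transform_a^2}}$ we have $\sigma_\gfunclass/F_\gfunclass = \delta$; by Lemma~\ref{lemma: covering number} both uniform entropy integrals are bounded above by $J(2\delta,\funclass,M)/2$; and because $F^{\transform_a^2}/F^{H_a}$ depends only on $M$ and $\tau_\pi$, this ratio is absorbed into the universal $\const$. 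Taking $\delta=1$ gives the two unrestricted-class bounds.

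For the three tail statements, Condition~\ref{cond: donsker} guarantees $\tilde{Q}_a \in \funclass$ almost surely, so $H_a(\tilde{Q}_a) \in \funclass^{H_a}$ and $\transform_a(\tilde{Q}_a)^2 - \transform_a(Q_{\#,a})^2 \in \funclass^{\transform_a^2}$ always; on the event $\{\|H_a(\tilde{Q}_a)\|_2 \leq \delta F^{H_a}\}$ the random function additionally lies in $\funclass^{H_a}_{\delta F^{H_a}}$, so $|\empro_n H_a(\tilde{Q}_a)|$ on that event is dominated by $\|\empro_n\|_{\funclass^{H_a}_{\delta F^{H_a}}}$, and likewise for the $\transform_a^2$ bound. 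I would then invoke a moment-form Talagrand inequality of the type
\begin{equation*}
\bigl(\expect\bigl\{\|\empro_n\|_\gfunclass - (1+\alpha)\expect\|\empro_n\|_\gfunclass\bigr\}_+^q\bigr)^{1/q} \leq C\sqrt q\, \sigma_\gfunclass + C \frac{qF_\gfunclass}{\alpha \sqrt n},
\end{equation*}
and apply Markov's inequality at threshold $\sqrt t$ to convert this $q$-th moment bound into the desired $t^{-q/2}$ tail probability. The additional $F^{H_a}/\sqrt n$ inside the $\sqrt t$-coefficient in the stated inequality arises from replacing $\sigma_\gfunclass$ by a Hoffmann--Jorgensen-type upper bound; the third, unrestricted tail inequality follows from the same argument with $\delta=1$.

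The main obstacle is aligning the exact constants and separating the $\sqrt t$ from the $t$ terms as stated: standard moment-Talagrand bounds deliver a threshold of order $\sigma_\gfunclass\sqrt{qt} + qF_\gfunclass t/(\alpha\sqrt n)$, whereas the lemma states the slightly weaker $q\sigma_\gfunclass\sqrt t + qF_\gfunclass t/(\alpha\sqrt n)$, which is valid since $\sqrt{qt} \leq q\sqrt t$ for $q\geq 1$. Carrying out this bookkeeping uniformly across all three tail statements, with a single universal constant $\const$ and correctly handling the envelope-dependent additive correction, requires some care but introduces no new ideas beyond standard empirical-process machinery.
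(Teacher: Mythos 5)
Your proposal is essentially the paper's own proof: both the expectation bounds and the tail bounds come from Theorems~5.1 and 5.2 of Chernozhukov, Chetverikov and Kato (2014) applied to $\funclass^{H_a}$ and $\funclass^{\transform_a^2}$, with the covering-number/entropy-integral comparison of Lemma~\ref{lemma: covering number} and the observation that on the event $\| H_a(\tilde{Q}_a) \|_2 \leq \delta F^{H_a}$ (resp.\ the $\transform_a^2$ analogue) the random function lies in the restricted class, so the single-function deviation is dominated by the supremum; the $q$-dependence ($K(q) \leq \const q$, $\sqrt{qt} \leq q\sqrt{t}$) is handled the same way.

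The one place your route does not deliver the stated inequality is the second display, the expectation bound for the \emph{unrestricted} classes. Setting $\delta = 1$ in the CCK Theorem~5.2 bound yields $\const F^{H_a} J(2,\funclass,M) + \const F^{H_a} J^2(2,\funclass,M)/\sqrt{n}$, and the extra $J^2(2,\funclass,M)/\sqrt{n}$ term cannot be absorbed into $\const F^{H_a} J(2,\funclass,M)$ with an \emph{absolute} constant $\const$ (it would require $J(2,\funclass,M) \lesssim \sqrt{n}$). The paper avoids this by invoking the classical uniform-entropy maximal inequality (the proof of Theorem~2.5.2, cf.\ Theorem~2.14.1, in van der Vaart and Wellner, 1996), which gives $\expect \| \empro_n \|_{\funclass^{H_a}} \leq \const F^{H_a} J(1,\funclass^{H_a},F^{H_a}) \leq \const F^{H_a} J(2,\funclass,M)$ directly, with no second term; you should substitute that inequality for the $\delta=1$ step (the corresponding tail bound with $\sigma$ taken as the full envelope is fine as you argue). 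Your remark that the ratio $F^{\transform_a^2}/F^{H_a}$ is absorbed into the constant shares the same (mild) looseness as the paper's statement, so no further issue there.
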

\begin{proof}
    This lemma essentially follows from Theorems~5.1 and 5.2 in \protect\citetsupp{Chernozhukov2014}. Based on the proof of this theorem, because $q \geq 2$ and $n \geq 1$, the constant $K(q)$ in this theorem can be taken as $\const q$ for an absolute constant $\const$.
    
    Consider the result for $H_a(\tilde{Q}_a)$ first.
    By Lemma~\ref{lemma: covering number}, $\sup_{f \in \funclass_{\delta F^{H_a}}^{H_a}} P_* f^2 \leq \delta^2 (F^{H_a})^2$. Taking the envelope in Lemma~\ref{lemma: covering number} and $\sigma^2$ in Section~5 in \protect\citepsupp{Chernozhukov2014} as $\delta^2 (F^{H_a})^2$, Theorem~5.1 in \protect\citepsupp{Chernozhukov2014} implies that, for any $q \geq 2$, $t \geq 1$, and $\alpha>0$, with probability at least $1-t^{-q/2}$,
    $$\|\empro_n\|_{\funclass_{\delta F^{H_a}}^{H_a}} \leq (1+\alpha) \expect \|\empro_n \|_{\funclass_{\delta F^{H_a}}^{H_a}} + \const q \left\{ \left( \delta F^{H_a} + \frac{F^{H_a}}{\sqrt{n}} \right) \sqrt{t} + \frac{F^{H_a} t}{\alpha \sqrt{n}} \right\}.$$
    Moreover, by Theorem~5.2 in \protect\citetsupp{Chernozhukov2014},
    $$\expect \|\empro_n \|_{\funclass_{\delta F^{H_a}}^{H_a}} \leq \const F^{H_a} J(\delta, \funclass^{H_a}, F^{H_a}) + \const \frac{F^{H_a} J^2(\delta,\funclass^{H_a},F^{H_a})}{\delta^2 \sqrt{n}}.$$
    The desired result follows from Condition~\ref{cond: donsker} and Lemma~\ref{lemma: covering number}.

    The result for $H_a(\tilde{Q}_a)$ without $\| H_a(\tilde{Q}_a) \|_2 \leq \delta F^{H_a}$ is also similar, except that $\funclass^{H_a}$ is considered so that $\sigma$ in Theorem~5.2 in \protect\citetsupp{Chernozhukov2014} is taken to be 1, and, by the proof of Theorem~2.5.2 in \protect\citetsupp{VanderVaart2023} (particularly the fourth displayed equation on page~193), $\expect \| \empro_n \|_{\funclass^{H_a}} \leq \const F^{H_a} J(1,\funclass^{H_a},F^{H_a})$.
    The bounds for $\funclass_{\transform_a^2}$ and $\transform_a(\tilde{Q}_a)^2$ can be shown similarly. 
\end{proof}

The next lemma shows rough bounds on the expectation, the variance, and the tail of $\tilde{\psi}_a$.
When $P_*$ and the estimation procedure are kept constant, and only $n$ varies, the rate should be tight (if the adopted empirical process bounds are tight), although the constants might be improved.
\begin{lemma} \label{lemma: bound Var tail psi}
    Under Conditions~\ref{cond: 2nd moment}, \ref{cond: positivity}, \ref{cond: bounded Q} and \ref{cond: donsker}, $|\expect[\tilde{\psi}_a] - \psi_{*,a}| \leq \const n^{-1/2} J(1,\funclass^{H_a},F^{H_a}) F^{J_a}$,
    $$\Var(\tilde{\psi}_a) \leq \const n^{-1} \left\{ \sigma_{\#,a}^2 + (F^{H_a})^2 J^2(2,\funclass,M) \right\},$$
    and, for any positive constants $\eta_1,\eta_3,\eta_5, \alpha_2,t_2 \geq 1,q_2 \geq 2,\alpha_4,t_4 \geq 1,q_4 \geq 2$, with
    \begin{align*}
        \eta_2 &:= \frac{\const}{n} \left[ (1+\alpha_2) J(2,\funclass,M) F^{H_a} + q_2 \left\{ F^{H_a} \sqrt{t_2} + \frac{F^{H_a} t_2}{\alpha_2 \sqrt{n}} \right\} \right]^2, \\
        \eta_4 &:= 2 |\psi_{*,a}| n^{-1/2} \left[ \const (1+\alpha_4) J(2,\funclass,M) F^{H_a} + \const q_4 \left\{ F^{H_a} \sqrt{t_4} + \frac{F^{H_a} t_4}{\alpha_4 \sqrt{n}} \right\} \right] \\
        &\qquad+ \const |\psi_{*,a}| n^{-1/2} F^{H_a} J(2, \funclass, M) + \const n^{-1} \left\{ \sigma_{\#,a}^2 + (F^{H_a})^2 J^2(2,\funclass,M) \right\}
    \end{align*}
    for some sufficiently large absolute constant $\const$, it holds that
    \begin{align*}
        &\Pr \left( \left| \tilde{\psi}_a^2 - \expect[\tilde{\psi}_a^2] \right| > \sum_{j=1}^5 \eta_j \right) \\
        &\leq \const \frac{\rho_{\#,a}}{\sigma_{\#,a}^3 \sqrt{n}} + 2 \exp \left\{ - \frac{\eta_1 n}{2 \sigma_{\#,a}^2} \right\} + 2 \ind(\psi_{*,a} \neq 0) \exp \left\{- \frac{\eta_3^2 n}{8 \psi_{*,a}^2 \sigma_{\#,a}^2} \right\} + 2 \exp \left\{ - \frac{\eta_5^2 n^2}{16 (F^{H_a})^2 \sigma_{\#,a}^2} \right\} \\
        &\quad+ t_2^{-q_2/2} + \ind(\psi_{*,a} \neq 0) t_4^{-q_4/2}.
    \end{align*}
\end{lemma}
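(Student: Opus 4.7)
The plan is built around the decomposition
\begin{equation*}
\tilde{\psi}_a - \psi_{*,a} = (P_n - P_*)\transform_a(Q_{\#,a}) + (P_n - P_*) H_a(\tilde{Q}_a) =: A + B,
\end{equation*}
which follows from $P_*\transform_a(Q) = \psi_{*,a}$ for every function $Q$ (the identity already used in the proof of Lemma~\ref{lemma: bound CV Var psi}).

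\textbf{Bias and variance.} Since $\transform_a(Q_{\#,a})$ is deterministic, $\expect[A]=0$, hence
\begin{equation*}
\expect[\tilde{\psi}_a] - \psi_{*,a} = \expect[B] = n^{-1/2}\,\expect\!\bigl[\empro_n H_a(\tilde{Q}_a)\bigr].
\end{equation*}
Because $H_a(\tilde{Q}_a)\in\funclass^{H_a}$ almost surely, I bound this in absolute value by $n^{-1/2}\expect\|\empro_n\|_{\funclass^{H_a}}$, and Lemma~\ref{lemma: rough empro tail bound} (together with Lemma~\ref{lemma: covering number} to restate the envelope/uniform-entropy factor) yields the advertised bias inequality. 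For the variance I use $\Var(\tilde{\psi}_a)\leq\expect[(A+B)^2]$ and the $L^2(P)$ triangle inequality, reducing the task to bounding $\expect[A^2]=\sigma_{\#,a}^2/n$ (trivial) and $\expect[B^2]\leq n^{-1}\expect\|\empro_n\|_{\funclass^{H_a}}^2$. The squared-supremum moment is obtained by integrating the tail inequality in Lemma~\ref{lemma: rough empro tail bound} (say with $q=4$ and $\alpha=1$), giving $\expect\|\empro_n\|_{\funclass^{H_a}}^2\leq\const (F^{H_a})^2 J^2(2,\funclass,M)$ up to lower-order $n^{-1/2}$ corrections that are absorbed into the overall $\const/n$ factor.

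\textbf{Tail bound.} The identity $\tilde{\psi}_a^2 - \psi_{*,a}^2 = (A+B)^2 + 2\psi_{*,a}(A+B)$, after subtracting its expectation, rewrites as
\begin{equation*}
\tilde{\psi}_a^2 - \expect[\tilde{\psi}_a^2]
= 2\psi_{*,a}A + 2\psi_{*,a}(B-\expect[B]) + \bigl\{(A+B)^2 - \expect[(A+B)^2]\bigr\}.
\end{equation*}
Expanding $(A+B)^2=A^2+2AB+B^2$, taking absolute values, and moving expectations to the deterministic side, a union bound reduces the event $\{|\tilde{\psi}_a^2 - \expect[\tilde{\psi}_a^2]|>\sum_{j=1}^5\eta_j\}$ to five simpler ones, each controlled separately: (i) $A^2>\eta_1$ by Hoeffding's inequality on the bounded-summand sample mean $A$, giving the $\exp\{-\eta_1 n/(2\sigma_{\#,a}^2)\}$ factor; (ii) $B^2>\eta_2$ by Lemma~\ref{lemma: rough empro tail bound} applied to $\funclass^{H_a}$, yielding the $t_2^{-q_2/2}$ factor; (iii) $2|\psi_{*,a}A|>\eta_3$ by Hoeffding again; (iv) the combined event controlling $2|\psi_{*,a}|\,|B-\expect B|$ by Lemma~\ref{lemma: rough empro tail bound}, after absorbing $2|\psi_{*,a}|\,|\expect B|$ and $\expect[(A+B)^2]$ into the deterministic part of $\eta_4$ via the bias and variance bounds already proved; (v) the cross term $2|AB|>\eta_5$, handled by conditioning on the tail event $B^2\leq\eta_2$, on which $|B|=\bigO(n^{-1/2})$, and then applying Hoeffding to $|A|$, which produces the $\exp\{-\eta_5^2 n^2/[16(F^{H_a})^2\sigma_{\#,a}^2]\}$ tail (the extra factor of $n$ inside the exponent arising precisely from the $n^{-1/2}$ size of $|B|$ on this event).

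\textbf{Anticipated obstacle.} The delicate step is calibrating event~(v): the crude deterministic bound $|B|\leq 2 F^{H_a}$ yields only the $\exp\{-\eta_5^2 n/[\const(F^{H_a})^2\sigma_{\#,a}^2]\}$ tail, and one must instead exploit the $\bigO(n^{-1/2})$ size of $B$ on its own high-probability event to pick up the extra factor of $n$ in the exponent. Beyond that, the remaining work is constant-tracking: decomposing the bias and variance bounds into pieces that fit inside $\eta_4$, and verifying that each stated exponential or $t^{-q/2}$ factor in the lemma's conclusion corresponds to exactly one of the five events above.
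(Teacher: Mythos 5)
Your skeleton is the same as the paper's: the decomposition $\tilde{\psi}_a-\psi_{*,a}=(P_n-P_*)\transform_a(Q_{\#,a})+n^{-1/2}\empro_n H_a(\tilde{Q}_a)$, the expansion of $\tilde{\psi}_a^2-\expect[\tilde{\psi}_a^2]$ into five pieces matched to $\eta_1,\dots,\eta_5$ (with the deterministic quantities $\expect[\empro_n H_a(\tilde Q_a)]$ and $\Var(\tilde\psi_a)$ absorbed into $\eta_4$), and the use of Lemma~\ref{lemma: rough empro tail bound} for the two empirical-process events, which yield the $t_2^{-q_2/2}$ and $t_4^{-q_4/2}$ factors. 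Your derivation of $\Var(\tilde\psi_a)$ by integrating the tail bound of Lemma~\ref{lemma: rough empro tail bound} is a legitimate substitute for the paper's citation of a second-moment maximal inequality (Theorem~2.14.1 of van der Vaart and Wellner) and gives the same order, since $J(2,\funclass,M)\geq 2$ lets the lower-order pieces be absorbed.

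However, two steps as you describe them do not deliver the bound in the form stated. First, for events (i) and (iii) you invoke Hoeffding, but Hoeffding's exponent is governed by the range of $\transform_a(Q_{\#,a})$, not by $\sigma_{\#,a}^2$, and since the range squared always dominates $4\sigma_{\#,a}^2$ this cannot produce the claimed factors $2\exp\{-\eta_1 n/(2\sigma_{\#,a}^2)\}$ and $2\exp\{-\eta_3^2 n/(8\psi_{*,a}^2\sigma_{\#,a}^2)\}$; it also leaves the additive term $\const\,\rho_{\#,a}/(\sigma_{\#,a}^3\sqrt{n})$ in the conclusion unexplained. The paper gets exactly these factors by applying the Berry--Esseen theorem to $\empro_n\transform_a(Q_{\#,a})/\sigma_{\#,a}$ and then the standard Gaussian tail bound, which is precisely where the $\rho_{\#,a}/(\sigma_{\#,a}^3\sqrt n)$ remainder comes from; your write-up needs this replacement. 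Second, for the cross term (v), your conditioning on $\{B^2\leq\eta_2\}$ gives $|B|\leq\sqrt{\eta_2}$, which is of order $F^{H_a}J(2,\funclass,M)(1+\dots)/\sqrt{n}$, so the resulting threshold for $|A|$ produces an exponent of the form $-\eta_5^2 n^2/\{\const\,(F^{H_a})^2 J^2(2,\funclass,M)\,\sigma_{\#,a}^2\cdot(\text{terms in }\alpha_2,q_2,t_2)\}$, not the stated $-\eta_5^2 n^2/\{16(F^{H_a})^2\sigma_{\#,a}^2\}$; in addition, splitting off $\{B^2>\eta_2\}$ inside event (v) double-counts that probability, so the conclusion would read $2t_2^{-q_2/2}$ rather than $t_2^{-q_2/2}$. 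For what it is worth, your suspicion about this term is aimed at a real subtlety: the paper's own treatment of Term~5 asserts $|\empro_n H_a(\tilde Q_a)|\leq 2F^{H_a}$, whereas the almost-sure bound is $|(P_n-P_*)H_a(\tilde Q_a)|\leq 2F^{H_a}$, i.e.\ $|\empro_n H_a(\tilde Q_a)|\leq 2\sqrt{n}F^{H_a}$, and with the correct crude bound one only obtains an exponent proportional to $n$, not $n^2$. So your alternative is a defensible way to recover an $n^2$-type exponent, but it proves a variant of the lemma with $J$-dependent constants in the exponential rather than the lemma as stated, and you would need to say so explicitly (and check that the weaker form still suffices where the lemma is used, e.g.\ in Lemma~\ref{lemma: bound sigma2 tail} and Theorem~\ref{thm: BE bound}).
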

\begin{proof}
    Since $\tilde{\psi}_a = P_n \transform_a(\tilde{Q}_a)$ and $P_* \transform_a(Q_{\#,a}) = P_* \transform(\tilde{Q}_a) = \psi_{*,a}$, it holds that $\tilde{\psi}_a = \psi_{*,a} + (P_n-P_*) \transform_a(Q_{\#,a}) + n^{-1/2} \empro_n H_a(\tilde{Q}_a)$. Thus, $\expect[\tilde{\psi}_a] = \psi_{*,a} + n^{-1/2} \expect[\empro_n H_a(\tilde{Q}_a)]$, and so $|\expect[\tilde{\psi}_a] - \psi_{*,a}|$ can be bounded using Lemma~\ref{lemma: rough empro tail bound}.
    
    By Theorem~2.14.1 in \protect\citetsupp{VanderVaart2023}, $\expect [\{ \empro_n H_a(\tilde{Q}_a) \}^2] \leq \const J(1,\funclass^{H_a},F^{H_a})^2 (F^{H_a})^2$. Thus, 
    \begin{align*}
        \Var(\tilde{\psi}_a) \leq 2 \Var((P_n-P_*) \transform_a(Q_{\#,a})) + \frac{2}{n} \Var(\empro_n H_a(\tilde{Q}_a)) \leq \frac{2}{n} \left\{ \sigma_{\#,a}^2 + \const J(1,\funclass^{H_a},F^{H_a})^2 (F^{H_a})^2 \right\}.
    \end{align*}
    
    Note that
    \begin{align*}
        \tilde{\psi}_a^2 &= \psi_{*,a}^2 + \{(P_n-P_*) \transform_a(Q_{\#,a})\}^2 + n^{-1} \{\empro_n H_a(\tilde{Q}_a)\}^2 + 2 \psi_{*,a} (P_n-P_*) \transform_a(Q_{\#,a}) + 2 \psi_{*,a} n^{-1/2} \empro_n H_a(\tilde{Q}_a) \\
        &\quad+ 2 n^{-1/2} (P_n-P_*) \transform_a(Q_{\#,a}) \cdot \empro_n H_a(\tilde{Q}_a), \\
        \expect[\tilde{\psi}_a^2] &= (\expect[\tilde{\psi}_a])^2 + \Var(\tilde{\psi}_a) = \psi_{*,a}^2 + n^{-1} \{\expect[\empro_n H_a(\tilde{Q}_a)]\}^2 + 2 \psi_{*,a} n^{-1/2} \expect[\empro_n H_a(\tilde{Q}_a)] + \Var(\tilde{\psi}_a).
    \end{align*}
    By the union bound, 
    \begin{align*}
        &\Pr \left( \left| \tilde{\psi}_a^2 - \expect[\tilde{\psi}_a^2] \right| > \sum_{j=1}^5 \eta_j \right) \\
        &= \Pr \Bigg( \Bigg| \{(P_n-P_*) \transform_a(Q_{\#,a})\}^2 + n^{-1} \{\empro_n H_a(\tilde{Q}_a)\}^2 \\
        &\qquad+ 2 \psi_{*,a} (P_n-P_*) \transform_a(Q_{\#,a}) + 2 \psi_{*,a} n^{-1/2} \empro_n H_a(\tilde{Q}_a) + 2 n^{-1/2} (P_n-P_*) \transform_a(Q_{\#,a}) \cdot \empro_n H_a(\tilde{Q}_a) \\
        &\qquad- \left\{ n^{-1} \{\expect[\empro_n H_a(\tilde{Q}_a)]\}^2 + 2 \psi_{*,a} n^{-1/2} \expect[\empro_n H_a(\tilde{Q}_a)] + \Var(\tilde{\psi}_a) \right\} \Bigg| > \sum_{j=1}^5 \eta_j \Bigg) \\
        &\leq \Pr \left( \{(P_n-P_*) \transform_a(Q_{\#,a})\}^2 > \eta_1 \right) + \Pr \left( |n^{-1} \{\empro_n H_a(\tilde{Q}_a)\}^2| > \eta_2 \right) + \Pr \left( |2 \psi_{*,a} (P_n-P_*) \transform_a(Q_{\#,a})| > \eta_3 \right) \\
        &\quad+ \Pr \left( | 2 \psi_{*,a} n^{-1/2} \empro_n H_a(\tilde{Q}_a) - 2 \psi_{*,a} n^{-1/2} \expect[\empro_n H_a(\tilde{Q}_a)] - \left\{ n^{-1} \{\expect[\empro_n H_a(\tilde{Q}_a)]\}^2 + \Var(\tilde{\psi}_a) \right\} | > \eta_4 \right) \\
        &\quad+ \Pr \left( | 2 n^{-1/2} (P_n-P_*) \transform_a(Q_{\#,a}) \cdot \empro_n H_a(\tilde{Q}_a) | > \eta_5 \right).
    \end{align*}
    I next bound the five terms on the right-hand side separately.

    \noindent\textbf{Term~1}: Term~1 equals $\Pr \left( | \empro_n \transform_a(Q_{\#,a})|/\sigma_{\#,a} > \sqrt{\eta_1 n}/\sigma_{\#,a} \right)$.
    By Berry-Esseen bound and Gaussian tail bound, Term~1 is upper bounded by
    $$\const \frac{\rho_{\#,a}}{\sigma_{\#,a}^3 \sqrt{n}} + 2 \exp \left\{ - \frac{\eta_1 n}{2 \sigma_{\#,a}^2} \right\}.$$

    \noindent\textbf{Term~2}: Term~2 equals $\Pr(|\empro_n H_a(\tilde{Q}_a)| > \sqrt{\eta_2 n})$. With the chosen $\eta_2$, by Lemma~\ref{lemma: rough empro tail bound}, Term~2 is upper bounded by $t_2^{-q_2/2}$.

    \noindent\textbf{Term~3}: If $\psi_{*,a}=0$, then Term~3 equals 0. Otherwise, Term~3 equals $\Pr(|\empro_n \transform_a(Q_{\#,a})| > \eta_3 \sqrt{n}/(2 |\psi_{*,a}|))$. By Berry-Esseen bound and Gaussian tail bound, Term~3 is upper bounded by
    $$\const \frac{\rho_{\#,a}}{\sigma_{\#,a}^3 \sqrt{n}} + 2 \exp \left\{- \frac{\eta_3^2 n}{8 \psi_{*,a}^2 \sigma_{\#,a}^2} \right\}.$$

    \noindent\textbf{Term~4}: For sufficiently large $\const$, the chosen $\eta_4$ satisfies
    \begin{align*}
        \eta_4 &> 2 |\psi_{*,a}| n^{-1/2} \left[ \const (1+\alpha_4) J(2,\funclass,M) F^{H_a} + \const q_4 \left\{ F^{H_a} \sqrt{t_4} + \frac{F^{H_a} t_4}{\alpha_4 \sqrt{n}} \right\} \right] \\
        &\qquad+ \left| 2 \psi_{*,a} n^{-1/2} \expect[\empro_n H_a(\tilde{Q}_a)] + n^{-1} \{\expect[\empro_n H_a(\tilde{Q}_a)]\}^2 + \Var(\tilde{\psi}_a) \right| \\
        &\geq \left| 2 \psi_{*,a} n^{-1/2} \expect[\empro_n H_a(\tilde{Q}_a)] + n^{-1} \{\expect[\empro_n H_a(\tilde{Q}_a)]\}^2 + \Var(\tilde{\psi}_a) \right|.
    \end{align*}
    If $\psi_{*,a}=0$, then Term~4 equals 0.
    Otherwise, Term~4 is upper bounded by
    \begin{align*}
        &\Pr \left( |2 \psi_{*,a} n^{-1/2} \empro_n H_a(\tilde{Q}_a) | > \eta_4 - \left| 2 \psi_{*,a} n^{-1/2} \expect[\empro_n H_a(\tilde{Q}_a)] + n^{-1} \{\expect[\empro_n H_a(\tilde{Q}_a)]\}^2 + \Var(\tilde{\psi}_a) \right| \right) \\
        &= \Pr \left( |\empro_n H_a(\tilde{Q}_a)| > \sqrt{n} \frac{\eta_4 - \left| 2 \psi_{*,a} n^{-1/2} \expect[\empro_n H_a(\tilde{Q}_a)] + n^{-1} \{\expect[\empro_n H_a(\tilde{Q}_a)]\}^2 + \Var(\tilde{\psi}_a) \right|}{2 |\psi_{*,a}|} \right) \\
        &\leq \Pr \left( |\empro_n H_a(\tilde{Q}_a)| > \const (1+\alpha_4) J(2,\funclass,M) F^{H_a} + \const q_4 \left\{ F^{H_a} \sqrt{t_4} + \frac{F^{H_a} t_4}{\alpha_4 \sqrt{n}} \right\} \right)
    \end{align*}
    By Lemma~\ref{lemma: rough empro tail bound}, Term~4 is upper bounded by $t_4^{-q_4/2}$.

    \noindent\textbf{Term~5}: Since $|\empro_n H_a(\tilde{Q}_a)| \leq 2 F^{H_a}$, Term~5 is upper bounded by $\Pr(|\empro_n \transform_a(Q_{\#,a})|/\sigma_{\#,a} > \eta_5 n/(4 F^{H_a} \sigma_{\#,a}))$, which, by Berry-Esseen bound and Gaussian tail bound, is further upper bounded by
    $$\const \frac{\rho_{\#,a}}{\sigma_{\#,a}^3 \sqrt{n}} + 2 \exp \left\{ - \frac{\eta_5^2 n^2}{32 (F^{H_a})^2 \sigma_{\#,a}^2} \right\}.$$

    Combining the bounds for these five terms yields the desired tail bound for $\tilde{\psi}_a^2$.
\end{proof}

\begin{lemma} \label{lemma: bound sigma2 tail}
    Suppose that Conditions~\ref{cond: 2nd moment}--\ref{cond: higher moments} and \ref{cond: donsker} hold.
    Recall the constant $\varrho_{\#,a}$ defined above Theorem~\ref{thm: BE bound}.
    For any positive constants $\delta, \alpha_6, q_6 \geq 2, t_6 \geq 1, \eta_7, \eta_8$, with
    $$\eta_6 := \const n^{-1/2} (1+\alpha_6) \left\{ F^{\transform_a^2} J(2 \delta, \funclass, M) + \frac{F^{\transform_a^2} J^2(2 \delta,\funclass,M)}{\delta^2 \sqrt{n}} \right\} + \const n^{-1/2} q_6 \left\{ \left( \delta F^{\transform_a^2} + \frac{F^{\transform_a^2}}{\sqrt{n}} \right) \sqrt{t_6} + \frac{F^{\transform_a^2} t_6}{\alpha_6 \sqrt{n}} \right\}$$
    for sufficiently large $\const$ and the constants in Lemma~\ref{lemma: bound Var tail psi},
    \begin{align*}
        &\Pr \left( | \tilde{\sigma}_a^2 - \expect[\tilde{\sigma}_a^2]| > \sum_{j=1}^8\eta_j, \| \transform_a(\tilde{Q}_a)^2- \transform_a(Q_{\#,a})^2 \|_2 \leq \delta F^{\transform_a^2} \right) \\
        &\leq \const \frac{\rho_{\#,a}}{\sigma_{\#,a}^3 \sqrt{n}} + 2 \exp \left\{ - \frac{\eta_1 n}{2 \sigma_{\#,a}^2} \right\} + 2 \ind(\psi_{*,a} \neq 0) \exp \left\{- \frac{\eta_3^2 n}{8 \psi_{*,a}^2 \sigma_{\#,a}^2} \right\} + 2 \exp \left\{ - \frac{\eta_5^2 n^2}{32 (F^{H_a})^2 \sigma_{\#,a}^2} \right\} \\
        &\quad+ \const \frac{\varrho_{\#,a}}{\varsigma_{\#,a}^3 \sqrt{n}} + 2 \exp \left\{ -\frac{\eta_7^2 n}{2 \varsigma_{\#,a}^2} \right\} + t_2^{-q_2/2} + \ind(\psi_{*,a} \neq 0) t_4^{-q_4/2} + t_6^{-q_6/2} + \frac{16M^2 (1-\tau_\pi)^2 \expect \| \tilde{Q}_a-Q_{\#,a} \|_{2}^2}{\tau_\pi^2 \eta_8^2}.
    \end{align*}
    Additionally under Condition~\ref{cond: more tail}, the last term on the right-hand side can be replaced by
    $$\tilde{a}_2 n^{\tilde{d}_2} \exp \left\{ -\tilde{c}_2 \left( \frac{\eta_8}{\tilde{r}_2(n)} \right)^{\tilde{q}_2} \right\} + \tilde{b}_2 n^{-1/2} \log n.$$
\end{lemma}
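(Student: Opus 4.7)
\medskip

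\noindent\textbf{Proof plan for Lemma~\ref{lemma: bound sigma2 tail}.} The plan is to decompose $\tilde{\sigma}_a^2 - \expect[\tilde{\sigma}_a^2]$ into a handful of pieces that can each be controlled by previously-proved lemmas or by elementary tail bounds, and then to combine via the union bound. The starting identity is $\tilde{\sigma}_a^2 = P_n \transform_a(\tilde{Q}_a)^2 - \tilde{\psi}_a^2$, so that
\begin{align*}
    \tilde{\sigma}_a^2 - \expect[\tilde{\sigma}_a^2]
    &= \bigl\{P_n \transform_a(\tilde{Q}_a)^2 - \expect[P_* \transform_a(\tilde{Q}_a)^2]\bigr\} - \bigl(\tilde{\psi}_a^2 - \expect[\tilde{\psi}_a^2]\bigr),
\end{align*}
using $\expect[P_n \transform_a(\tilde{Q}_a)^2] = \expect[P_* \transform_a(\tilde{Q}_a)^2]$ because $\tilde{Q}_a$ is based on the same sample.

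\medskip

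\noindent The first bracketed term is then split as
\begin{align*}
    P_n \transform_a(\tilde{Q}_a)^2 - \expect[P_* \transform_a(\tilde{Q}_a)^2]
    &= (P_n-P_*)\transform_a(Q_{\#,a})^2 \\
    &\quad+ (P_n-P_*)\bigl\{\transform_a(\tilde{Q}_a)^2 - \transform_a(Q_{\#,a})^2\bigr\} \\
    &\quad+ \bigl\{P_*\transform_a(\tilde{Q}_a)^2 - \expect[P_*\transform_a(\tilde{Q}_a)^2]\bigr\}.
\end{align*}
I allocate $\eta_7$ to the first summand, $\eta_6$ to the empirical process summand, $\eta_8$ to the third summand, and the already-parameterized $\sum_{j=1}^5 \eta_j$ from Lemma~\ref{lemma: bound Var tail psi} to the $\tilde{\psi}_a^2$ fluctuation. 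A union bound then gives the form of the final inequality.

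\medskip

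\noindent I now bound each piece. The sample mean $(P_n-P_*)\transform_a(Q_{\#,a})^2$ is i.i.d.\ with variance $\varsigma_{\#,a}^2$ and third absolute moment $\varrho_{\#,a}$; standardizing by $\sqrt{n}/\varsigma_{\#,a}$ and using the Berry--Esseen theorem together with the Gaussian tail bound yields $\Pr(|(P_n-P_*)\transform_a(Q_{\#,a})^2|>\eta_7) \le \const \varrho_{\#,a}/(\varsigma_{\#,a}^3 \sqrt{n}) + 2\exp\{-\eta_7^2 n/(2\varsigma_{\#,a}^2)\}$, exactly matching the two corresponding summands in the statement. For the third summand, Lemma~\ref{lemma: transform sqaure} gives $\Var(P_* \transform_a(\tilde{Q}_a)^2) \le 16 M^2 (1-\tau_\pi)^2 \expect\|\tilde{Q}_a - Q_{\#,a}\|_2^2/\tau_\pi^2$, so Chebyshev's inequality yields the claimed $\eta_8$ bound; additionally under Condition~\ref{cond: more subgaussian}, the subgaussian tail bound for $P_*\transform_a(\tilde{Q}_a)^2 - \expect[P_* \transform_a(\tilde{Q}_a)^2]$ (using the stated variance proxy $\tilde{s}_3$) immediately gives the alternative exponential replacement.

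\medskip

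\noindent The main obstacle, and the only place where the assumed event $\{\|\transform_a(\tilde{Q}_a)^2 - \transform_a(Q_{\#,a})^2\|_2 \leq \delta F^{\transform_a^2}\}$ is used, is the empirical process term $(P_n-P_*)\{\transform_a(\tilde{Q}_a)^2 - \transform_a(Q_{\#,a})^2\} = n^{-1/2}\empro_n\{\transform_a(\tilde{Q}_a)^2-\transform_a(Q_{\#,a})^2\}$. On this event, the random function $\transform_a(\tilde{Q}_a)^2-\transform_a(Q_{\#,a})^2$ lies in $\funclass^{\transform_a^2}_{\delta F^{\transform_a^2}}$, so the corresponding tail bound in Lemma~\ref{lemma: rough empro tail bound} (applied with $\alpha=\alpha_6$, $q=q_6$, $t=t_6$) yields an exception probability at most $t_6^{-q_6/2}$ provided the deviation exceeds
$$
(1+\alpha_6)\expect\|\empro_n\|_{\funclass^{\transform_a^2}_{\delta F^{\transform_a^2}}} + \const q_6 \left\{\bigl(\delta F^{\transform_a^2} + F^{\transform_a^2}/\sqrt{n}\bigr)\sqrt{t_6} + \frac{F^{\transform_a^2} t_6}{\alpha_6\sqrt{n}}\right\}.
$$
Bounding $\expect\|\empro_n\|_{\funclass^{\transform_a^2}_{\delta F^{\transform_a^2}}}$ by $\const\{F^{\transform_a^2} J(2\delta,\funclass,M) + F^{\transform_a^2} J^2(2\delta,\funclass,M)/(\delta^2\sqrt{n})\}$ via Lemma~\ref{lemma: rough empro tail bound} and rescaling by $n^{-1/2}$ recovers the definition of $\eta_6$. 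The care needed here is that Lemma~\ref{lemma: rough empro tail bound} already incorporates the $L_2$-localization inside the probability it bounds, so the restriction to $\{\|\cdot\|_2 \le \delta F^{\transform_a^2}\}$ does not cost an additional term. Combining all pieces and applying the union bound gives the stated inequality.
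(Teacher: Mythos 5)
Your overall architecture (union bound over four pieces, $\eta_7$ for $(P_n-P_*)\transform_a(Q_{\#,a})^2$ via Berry--Esseen plus the Gaussian tail, $\eta_8$ for $P_*\transform_a(\tilde{Q}_a)^2-\expect[P_*\transform_a(\tilde{Q}_a)^2]$ via Lemma~\ref{lemma: transform sqaure} with Chebyshev or the subgaussian tail, $\sum_{j=1}^5\eta_j$ for $\tilde{\psi}_a^2$ via Lemma~\ref{lemma: bound Var tail psi}, and $\eta_6$ for the localized empirical process via Lemma~\ref{lemma: rough empro tail bound}) is the same as the paper's. But your opening identity is wrong, and the justification you give is exactly backwards: $\expect[P_n \transform_a(\tilde{Q}_a)^2] \neq \expect[P_* \transform_a(\tilde{Q}_a)^2]$ precisely \emph{because} $\tilde{Q}_a$ is fit on the same sample over which $P_n$ averages (the equality would hold if $\tilde{Q}_a$ were independent of that sample, as under cross-fitting). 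The difference is the in-sample bias $\expect[(P_n-P_*)\{\transform_a(\tilde{Q}_a)^2-\transform_a(Q_{\#,a})^2\}] = n^{-1/2}\expect[\empro_n\{\transform_a(\tilde{Q}_a)^2-\transform_a(Q_{\#,a})^2\}]$, which is generally nonzero for the non-cross-fit estimator and is in fact one of the quantities this paper is at pains to track (it appears explicitly in the paper's expression for $\expect[\tilde{\sigma}_a^2]$ and again in the proof of Theorem~\ref{thm: expect Var}).

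The consequence is a genuine gap: with the missing term, your four pieces do not sum to $\tilde{\sigma}_a^2-\expect[\tilde{\sigma}_a^2]$, so the union bound does not deliver the stated inequality. Concretely, the quantity that must be covered by $\eta_6$ is the \emph{centered} process $n^{-1/2}\left|\empro_n\{\transform_a(\tilde{Q}_a)^2-\transform_a(Q_{\#,a})^2\} - \expect[\empro_n\{\transform_a(\tilde{Q}_a)^2-\transform_a(Q_{\#,a})^2\}]\right|$, whereas you bound only the uncentered $n^{-1/2}|\empro_n\{\cdots\}|$. The fix is what the paper does: keep the centering term in the decomposition, note by the triangle inequality that the event $\{n^{-1/2}|\empro_n\{\cdots\}-\expect[\empro_n\{\cdots\}]|>\eta_6\}\cap E$ forces $|\empro_n\{\cdots\}|$ to exceed the Lemma~\ref{lemma: rough empro tail bound} threshold provided the constant in $\eta_6$ is taken large enough to additionally absorb $n^{-1/2}|\expect[\empro_n\{\transform_a(\tilde{Q}_a)^2-\transform_a(Q_{\#,a})^2\}]|$, and only then invoke the localized tail bound to get $t_6^{-q_6/2}$. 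As written, your argument silently discards this bias term, which is not benign in the non-cross-fit setting the lemma addresses.
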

\begin{proof}
    Note that
    \begin{align}
        \tilde{\sigma}_a^2 &= P_n \transform_a(\tilde{Q}_a)^2 - \tilde{\psi}_a^2 \nonumber \\
        &= n^{-1/2} \empro_n \{ \transform_a(\tilde{Q}_a)^2 - \transform_a(Q_{\#,a})^2 \} + (P_n-P_*) \transform_a(Q_{\#,a})^2 + P_* \transform_a(\tilde{Q}_a)^2 - \tilde{\psi}_a^2, \nonumber \\
        \expect[\tilde{\sigma}_a^2] &= n^{-1/2} \expect[ \empro_n \{ \transform_a(\tilde{Q}_a)^2 - \transform_a(Q_{\#,a})^2 \} ] + \expect[ P_* \transform_a(\tilde{Q}_a)^2] - \expect[\tilde{\psi}_a^2]. \label{eq: expect Var}
    \end{align}
    Let $E$ denote the event $\| \transform_a(\tilde{Q}_a)^2- \transform_a(Q_{\#,a})^2 \|_2 \leq \delta F^{\transform_a^2}$.
    It holds that
    \begin{align*}
        &\Pr \left( | \tilde{\sigma}_a^2 - \expect[\tilde{\sigma}_a^2]| > \sum_{j=1}^8\eta_j, E \right) \\
        &\leq \Pr \left( n^{-1/2} \left| \empro \{ \transform_a(\tilde{Q}_a)^2 - \transform_a(Q_{\#,a})^2 \} - \expect[\empro \{ \transform_a(\tilde{Q}_a)^2 - \transform_a(Q_{\#,a})^2 \}] \right| > \eta_6, E \right) \\
        &\quad+ \Pr( |(P_n-P_*) \transform_a(Q_{\#,a})^2| > \eta_7 ) + \Pr(|P_* \transform_a(\tilde{Q}_a)^2 - \expect[ P_* \transform_a(\tilde{Q}_a)^2]| > \eta_8) \\
        &\quad+ \Pr \left( \left| \tilde{\psi}_a^2 - \expect[\tilde{\psi}_a^2] \right| > \sum_{j=1}^5 \eta_j \right).
    \end{align*}

    For sufficiently large $\const$, when $E$ occurs, by Lemma~\ref{lemma: rough empro tail bound},
    \begin{align*}
        \sqrt{n} \eta_6 &\geq |\expect[\empro \{ \transform_a(\tilde{Q}_a)^2 - \transform_a(Q_{\#,a})^2 \}]| + \const (1+\alpha) \expect \| \empro_n \|_{\funclass^{\transform_a^2}_{\delta F^{\transform_a^2}}} + \const q \left\{ \left( \delta F^{\transform_a^2} + \frac{F^{\transform_a^2}}{\sqrt{n}} \right) \sqrt{t} + \frac{F^{\transform_a^2} t}{\alpha \sqrt{n}} \right\} \\
        &> |\expect[\empro \{ \transform_a(\tilde{Q}_a)^2 - \transform_a(Q_{\#,a})^2 \}]|,
    \end{align*}
    and thus, by Lemma~\ref{lemma: rough empro tail bound} again,
    $$\Pr \left( n^{-1/2} \left| \empro \{ \transform_a(\tilde{Q}_a)^2 - \transform_a(Q_{\#,a})^2 \} - \expect[\empro \{ \transform_a(\tilde{Q}_a)^2 - \transform_a(Q_{\#,a})^2 \}] \right| > \eta_6, E \right) \leq t_6^{-q_6/2}.$$

    By Conditions~\ref{cond: 3rd moment} and \ref{cond: bounded Q}, $\varrho_{\#,a} < \infty$. Thus, by Berry-Esseen bound, Gaussian tail bound, and Condition~\ref{cond: higher moments},
    \begin{align*}
        \Pr( |(P_n-P_*) \transform_a(Q_{\#,a})^2| > \eta_7 ) &= \Pr \left( \sqrt{n} |(P_n-P_*) \transform_a(Q_{\#,a})^2|/\varsigma_{\#,a} > \eta_7 \sqrt{n}/\varsigma_{\#,a} \right) \\
        &\leq \const \frac{\varrho_{\#,a}}{\varsigma_{\#,a}^3 \sqrt{n}} + 2 \exp \left\{ -\frac{\eta_7^2 n}{2 \varsigma_{\#,a}^2} \right\}.
    \end{align*}

    By Lemma~\ref{lemma: transform sqaure} and Chebyshev inequality,
    $$\Pr(|P_* \transform_a(\tilde{Q}_a)^2 - \expect[ P_* \transform_a(\tilde{Q}_a)^2]| > \eta_8) \leq \frac{16M^2 (1-\tau_\pi)^2 \expect \| \tilde{Q}_a-Q_{\#,a} \|_{2}^2}{\tau_\pi^2 \eta_8^2}.$$
    Additionally under Condition~\ref{cond: more tail},
    \begin{align*}
        &\Pr(|P_* \transform_a(\tilde{Q}_a)^2 - \expect[ P_* \transform_a(\tilde{Q}_a)^2]| > \eta_8) \leq \tilde{a}_2 n^{\tilde{d}_2} \exp \left\{ -\tilde{c}_2 \left( \frac{\eta_8}{\tilde{r}_2(n)} \right)^{\tilde{q}_2} \right\} + \tilde{b}_2 n^{-1/2} \log n.
    \end{align*}
    
    Finally, $\Pr \left( \left| \tilde{\psi}_a^2 - \expect[\tilde{\psi}_a^2] \right| > \sum_{j=1}^5 \eta_j \right)$ can be bounded as in Lemma~\ref{lemma: bound Var tail psi}.
\end{proof}

\begin{proof}[Proof of Theorem~\ref{thm: expect Var}]
    Note that $\sigma_{\#,a}^2 = P_* \transform_a(Q_{\#,a})^2 - \psi_{*,a}^2$. By \eqref{eq: expect Var},
    $$\sigma_{\dagger,a}^2 - \sigma_{\#,a}^2 = n^{-1/2} \expect[ \empro_n \{ \transform_a(\tilde{Q}_a)^2 - \transform_a(Q_{\#,a})^2 \} ] + P_* \left\{ \expect_{\tilde{Q}_a}[\transform_a(\tilde{Q}_a)^2] - \transform_a(Q_{\#,a})^2 \right\} - \Var(\tilde{\psi}_a).$$
    By Lemma~\ref{lemma: transform sqaure},
    \begin{align*}
        &P_* \left\{ \expect_{\tilde{Q}_a}[\transform_a(\tilde{Q}_a)^2] - \transform_a(Q_{\#,a})^2 \right\} \\
        &= P_* \left\{ \frac{1-\pi_*(a \mid \cdot)}{\pi_*(a \mid \cdot)} \expect_{\tilde{Q}_a} [(\tilde{Q}_a-Q_{\#,a})^2] \right\} + 2 P_* \left\{ \frac{1-\pi_*(a \mid \cdot)}{\pi_*(a \mid \cdot)} (Q_{\#,a}-Q_{*,a}) \expect_{\tilde{Q}_a}[\tilde{Q}_a - Q_{\#,a}] \right\}.
    \end{align*}
       
    By Lemmas~\ref{lemma: covering number} and \ref{lemma: rough empro tail bound},
    \begin{align}
        &|\expect[ \empro_n \{ \transform_a(\tilde{Q}_a)^2 - \transform_a(Q_{\#,a})^2 \} ]| \nonumber \\
        &= \expect[ \empro_n \{ \transform_a(\tilde{Q}_a)^2 - \transform_a(Q_{\#,a})^2 \} \ind(\| \tilde{Q}_a - Q_{\#,a} \|_2 \leq 2 \delta M) ] \nonumber \\
        &\qquad+ \expect[ \empro_n \{ \transform_a(\tilde{Q}_a)^2 - \transform_a(Q_{\#,a})^2 \} \ind(\|\tilde{Q}_a - Q_{\#,a} \|_2 > 2 \delta M) ] \nonumber \\
        &\leq \expect \| \empro_n \|_{\funclass^{\transform_a^2}_{\delta F^{\transform_a^2}}} + \expect \| \empro_n \|_{\funclass^{\transform_a^2}} \Pr(\|\tilde{Q}_a - Q_{\#,a} \|_2 > 2 \delta M) \nonumber \\
        &\leq \const F^{\transform_a^2} J(2 \delta, \funclass, M) + \const \frac{F^{\transform_a^2} J^2(2 \delta,\funclass,M)}{\delta^2 \sqrt{n}} + \const F^{\transform_a^2} J(2, \funclass, M) \Pr(\|\tilde{Q}_a - Q_{\#,a} \|_2 > 2 \delta M), \label{eq: Var bias empirical process}
    \end{align}
    where $\Pr(\|\tilde{Q}_a - Q_{\#,a} \|_2 > 2 \delta M)$ can be bounded by Markov's inequality, or with a sub-Weibull tail bound under Condition~\ref{cond: tail}.
    Thus, \eqref{eq: Var bias} holds.
    Eq.~\ref{eq: ATE Var bias} can be shown similarly by noting that
    \begin{align*}
        &P_* \left\{ \expect_{\tilde{Q}_1,\tilde{Q}_0}[ \{ \transform_1(\tilde{Q}_1) - \transform_0(\tilde{Q}_0) \}^2] - \{ \transform_1(Q_{\#,1}) - \transform_0(Q_{\#,0}) \}^2 \right\} \\
        &= \sum_{a=0}^1 P_* \left\{ \frac{1-\pi_*(a \mid \cdot)}{\pi_*(a \mid \cdot)} \expect_{\tilde{Q}_a} [(\tilde{Q}_a-Q_{\#,a})^2] \right\} + 2 \sum_{a=0}^1 P_* \left\{ \frac{1-\pi_*(a \mid \cdot)}{\pi_*(a \mid \cdot)} (Q_{\#,a}-Q_{*,a}) \expect_{\tilde{Q}_a}[\tilde{Q}_a - Q_{\#,a}] \right\} \\
        &\quad-2 P_* \left\{ \expect_{\tilde{Q}_1,\tilde{Q}_0}[\transform_1(\tilde{Q}_1) \transform_0(\tilde{Q}_0)] - \transform_1(Q_{\#,1}) \transform_0(Q_{\#,0}) \right\} \\
        &= \expect \left[ P_* \left\{ f (\tilde{Q}_1-Q_{\#,1}) + \frac{1}{f} (\tilde{Q}_0-Q_{\#,0}) \right\}^2 \right] + 2 \sum_{a=0}^1 P_* \left\{ \frac{1}{\pi_*(a \mid \cdot)} (Q_{\#,a}-Q_{*,a}) \expect_{\tilde{Q}_a}[\tilde{Q}_a - Q_{\#,a}] \right\}.
    \end{align*}
    By the triangle inequality and Lemma~\ref{lemma: bound Var tail psi},
    \begin{align*}
        &|\sigma_{\dagger,a}^2 - \sigma_{\#,a}^2| \\
        &\leq \frac{\const}{\sqrt{n}} F^{\transform_a^2} J(2 \delta, \funclass, M) + \const \frac{F^{\transform_a^2} J^2(2 \delta,\funclass,M)}{\delta^2 n} + \frac{\const}{\sqrt{n}} F^{\transform_a^2} J(2, \funclass, M) \Pr(\|\tilde{Q}_a - Q_{\#,a} \|_2 > 2 \delta M) \\
        &\quad+ P_* \left\{ \frac{1-\pi_*(a \mid \cdot)}{\pi_*(a \mid \cdot)} \expect_{\tilde{Q}_a} [(\tilde{Q}_a-Q_{\#,a})^2] \right\} + 2 \left| P_* \left\{ \frac{1-\pi_*(a \mid \cdot)}{\pi_*(a \mid \cdot)} (Q_{\#,a}-Q_{*,a}) \expect_{\tilde{Q}_a}[\tilde{Q}_a - Q_{\#,a}] \right\} \right| \\
        &\quad+ \frac{\const}{n} \left\{ \sigma_{\#,a}^2 + (F^{H_a})^2 J^2(2,\funclass,M) \right\}.
    \end{align*}
    Apply Cauchy-Schwarz inequality, Markov's inequality, and the inequality $|\sigma_{\dagger,a} - \sigma_{\#,a}| \leq |\sigma_{\dagger,a}^2 - \sigma_{\#,a}^2|/\sigma_{\#,a}$ to obtain the desired bound.
\end{proof}

\begin{proof}[Proof of Theorem~\ref{thm: BE bound}]
    For any positive constants $\delta, \tilde{\alpha},\tilde{q} \geq 2, \tilde{t} \geq 1$, let
    \begin{align*}
        \tilde{\eta} &:= \const (1+\tilde{\alpha}) \left\{ J(2 \delta, \funclass, M) F^{H_a} + \frac{F^{H_a} J^2(2\delta,\funclass,M)}{\delta^2 \sqrt{n}} \right\} + \const \tilde{q} \left\{ \left( \delta F^{H_a} + \frac{F^{H_a}}{\sqrt{n}} \right) \sqrt{\tilde{t}} + \frac{F^{H_a} \tilde{t}}{\tilde{\alpha} \sqrt{n}} \right\},
    \end{align*}
    for some sufficiently large constant $\const$.
    For any positive constants $\tilde{\eta}_1,\ldots,\tilde{\eta}_8$, taking $\eta = \tilde{\eta}+\sum_{j=1}^8 \tilde{\eta}_j$ in Lemmas~\ref{lemma: AIPW delta} and \ref{lemma: normal quantile}, and considering the two cases of whether $\| \tilde{Q}_a-Q_{\#,a} \|_2 \leq 2 \delta M$ separately,
    \begin{align*}
        &\left| \Pr(\sqrt{n} (\tilde{\psi}_a - \psi_{*,a}) \leq z_\alpha \tilde{\sigma}_a) - (1-\alpha) - \phi(z_\alpha) z_\alpha \frac{\sigma_{\dagger,a}-\sigma_{\#,a}}{\sigma_{\#,a}} \right| \\
        &\leq \const \frac{\rho_{\#,a}}{\sigma_{\#,a}^3 \sqrt{n}} + \const \frac{(\sigma_{\dagger,a}-\sigma_{\#,a})^2}{\sigma_{\#,a}^2} + \const \frac{\tilde{\eta}+\sum_{j=1}^8 \tilde{\eta}_j}{\sigma_{\#,a}} \\
        &\quad+ \Pr(|\sqrt{n} \{ \tilde{\psi}_a - \psi_{*,a} - P_n D_a(Q_{\#,a},\psi_{*,a}) \}| > \tilde{\eta}, \| \tilde{Q}_a-Q_{\#,a} \|_2 \leq 2 \delta M) \\
        &\quad+ \Pr \left( | z_\alpha (\tilde{\sigma}_a - \sigma_{\dagger,a}) | > \sum_{j=1}^8 \tilde{\eta}_j, \| \tilde{Q}_a-Q_{\#,a} \|_2 \leq 2 \delta M \right) + \Pr(\| \tilde{Q}_a-Q_{\#,a} \|_2 > 2 \delta M).
    \end{align*}
    Note that $\| \tilde{Q}_a-Q_{\#,a} \|_2 \leq 2 \delta M$ implies $\| H_a(\tilde{Q}_a) \|_2 = \| H_a(\tilde{Q}_a) - H_a(Q_{\#,a}) \|_2 \leq \delta F^{H_a}$ and $\| \transform_a(\tilde{Q}_a)^2- \transform_a(Q_{\#,a})^2 \|_2 \leq \delta F^{\transform_a^2}$ by Lemma~\ref{lemma: covering number}.
    Thus, by Lemma~\ref{lemma: rough empro tail bound},
    \begin{align*}
        &\Pr(|\sqrt{n} \{ \tilde{\psi}_a - \psi_{*,a} - P_n D_a(Q_{\#,a},\psi_{*,a}) \}| > \tilde{\eta}, \| \tilde{Q}_a-Q_{\#,a} \|_2 \leq 2 \delta M) \\
        &\leq \Pr(|\empro_n H_a(\tilde{Q}_a)| > \tilde{\eta}, \| H_a(\tilde{Q}_a) \|_2 \leq \delta F^{H_a}) \leq \tilde{t}^{-\tilde{q}/2}.
    \end{align*}

    Next bound $\Pr \left( | z_\alpha (\tilde{\sigma}_a - \sigma_{\dagger,a}) | > \sum_{j=1}^8 \tilde{\eta}_j, \| \tilde{Q}_a-Q_{\#,a} \|_2 \leq 2 \delta M \right)$. This term is zero if $z_\alpha=0$ and $\sum_{j=1}^8 \tilde{\eta}_j > 0$.
    Otherwise, since $\tilde{\sigma}_a \geq 0$ and $\sigma_{\dagger,a} > 0$, it holds that
    \begin{align*}
        &\Pr \left( | z_\alpha (\tilde{\sigma}_a - \sigma_{\dagger,a}) | > \sum_{j=1}^8 \tilde{\eta}_j, \| \tilde{Q}_a-Q_{\#,a} \|_2 \leq 2 \delta M \right) \\
        &\leq \Pr \left( | \tilde{\sigma}_a^2 - \sigma_{\dagger,a}^2 | > \sigma_{\dagger,a} \sum_{j=1}^8 \tilde{\eta}_j/|z_\alpha|, \| \transform_a(\tilde{Q}_a)^2- \transform_a(Q_{\#,a})^2 \|_2 \leq \delta F^{\transform_a^2} \right).
    \end{align*}
    For each $j=1,\ldots,8$, take $\tilde{\eta}_j$ to be $\eta_j |z_\alpha| / \sigma_{\dagger,a}$ with $\eta_j$ defined in Lemma~\ref{lemma: bound Var tail psi} or \ref{lemma: bound sigma2 tail}. This term can be bounded as in Lemma~\ref{lemma: bound sigma2 tail}.
    In conclusion, I will take $\tilde{\eta}_j$ to be $\eta_j |z_\alpha| / \sigma_{\dagger,a} + \epsilon/7$ for a constant $\epsilon > 0$ to handle both cases.

    By Markov's inequality,
    \begin{equation}
        \Pr(\| \tilde{Q}_a-Q_{\#,a} \|_2 > 2 \delta M) \leq \frac{\expect \| \tilde{Q}_a-Q_{\#,a} \|_2^2}{4 \delta^2 M^2}. \label{eq: L2 tail}
    \end{equation}
    Additionally under Condition~\ref{cond: tail},
    \begin{equation}
        \Pr(\| \tilde{Q}_a-Q_{\#,a} \|_2 > 2 \delta M) \leq \tilde{a}_1 n^{\tilde{d}_1} \exp \left\{ - \tilde{c}_1 \left( \frac{2 \delta M}{\tilde{r}_1(n)} \right)^{\tilde{q}_1} \right\} + \tilde{b}_1 n^{-1/2} \log n. \label{eq: L2 tail tail}
    \end{equation}

    Combining all the above bounds yields that, for any constant $\epsilon>0$,
    \begin{align}
        &\left| \Pr(\sqrt{n} (\tilde{\psi}_a - \psi_{*,a}) \leq z_\alpha \tilde{\sigma}_a) - (1-\alpha) - \phi(z_\alpha) z_\alpha \frac{\sigma_{\dagger,a}-\sigma_{\#,a}}{\sigma_{\#,a}} \right| \nonumber \\
        &\leq \const \frac{\rho_{\#,a}}{\sigma_{\#,a}^3 \sqrt{n}} + \const \frac{(\sigma_{\dagger,a}-\sigma_{\#,a})^2}{\sigma_{\#,a}^2} + \const \frac{\tilde{\eta}+ |z_\alpha| \sum_{j=1}^8 \eta_j/\sigma_{\dagger,a} + \ind(z_\alpha = 0) \epsilon}{\sigma_{\#,a}} + \tilde{t}^{-\tilde{q}/2} \nonumber \\
        &\quad + \ind(z_\alpha \neq 0) \Bigg[ 2 \exp \left\{ - \frac{\eta_1 n}{2 \sigma_{\#,a}^2} \right\} + 2 \ind(\psi_{*,a} \neq 0) \exp \left\{- \frac{\eta_3^2 n}{8 \psi_{*,a}^2 \sigma_{\#,a}^2} \right\} + 2 \exp \left\{ - \frac{\eta_5^2 n^2}{32 (F^{H_a})^2 \sigma_{\#,a}^2} \right\} \nonumber \\
        &\quad+ \const \frac{\varrho_{\#,a}}{\varsigma_{\#,a}^3 \sqrt{n}} + 2 \exp \left\{ -\frac{\eta_7^2 n}{2 \varsigma_{\#,a}^2} \right\} + t_2^{-q_2/2} + \ind(\psi_{*,a} \neq 0) t_4^{-q_4/2} + t_6^{-q_6/2} + \frac{16M^2 (1-\tau_\pi)^2 \expect \| \tilde{Q}_a-Q_{\#,a} \|_{2}^2}{\tau_\pi^2 \eta_8^2} \Bigg] \nonumber \\
        &\quad+ \frac{\expect \| \tilde{Q}_a-Q_{\#,a} \|_2^2}{4 \delta^2 M^2}, \label{eq: rough BE bound}
    \end{align}
    where
    \begin{align*}
        &\frac{16M^2 (1-\tau_\pi)^2 \expect \| \tilde{Q}_a-Q_{\#,a} \|_{2}^2}{\tau_\pi^2 \eta_8^2} \text{ can be replaced by } \tilde{a}_2 n^{\tilde{d}_2} \exp \left\{ -\tilde{c}_2 \left( \frac{\eta_8}{\tilde{r}_2(n)} \right)^{\tilde{q}_2} \right\} + \tilde{b}_2 n^{-1/2} \log n
    \end{align*}
    under Condition~\ref{cond: more tail}, and
    $$\frac{\expect \| \tilde{Q}_a-Q_{\#,a} \|_2^2}{4 \delta^2 M^2} \text{ can be replaced by } \tilde{a}_1 n^{\tilde{d}_1} \exp \left\{ - \tilde{c}_1 \left( \frac{2 \delta M}{\tilde{r}_1(n)} \right)^{\tilde{q}_1} \right\} + \tilde{b}_1 n^{-1/2} \log n$$
    under Condition~\ref{cond: tail}.

    Let $q \geq 1$ be arbitrary. Taking
    \begin{align*}
        &\eta_1 = \frac{2 \sigma_{\#,a}^2 q \log n}{n}, \quad \eta_3 = \begin{cases}
            n^{-q} \text{ or any positive number} & (\text{if } z_\alpha =0) \\
            2 \sqrt{2} |\psi_{*,a}| \sigma_{\#,a} \sqrt{\frac{q \log n}{n}} & (\text{if } \psi_{*,a} \neq 0, z_\alpha \neq 0) \\
            \frac{\sigma_{\#,a} \sigma_{\dagger,a}}{|z_\alpha|} n^{-q} & (\text{if } \psi_{*,a} = 0, z_\alpha \neq 0)
        \end{cases}, \\
        &\eta_5 = \frac{4 \sqrt{2} F^{H_a} \sigma_{\#,a} \sqrt{q \log n}}{n}, \quad \eta_7 = \varsigma_{\#,a} \sqrt{\frac{2 q \log n}{n}}, \\
        &\eta_8 = \begin{cases}
            n^{-q} \text{ or any positive number} & (\text{if } z_\alpha = 0) \\
            \left\{ \frac{16 M^2 (1-\tau_\pi)^2 \sigma_{\#,a} \sigma_{\dagger,a}}{\tau_\pi^2 |z_\alpha|} \expect \| \tilde{Q}_a-Q_{\#,a} \|_{2}^2 \right\}^{1/3} & (\text{without Condition~\ref{cond: more tail} and if } z_\alpha \neq 0) \\
            \tilde{r}_2(n) \left( \frac{q+\tilde{d}_2}{\tilde{c}_2} \log n \right)^{1/\tilde{q}_2} & (\text{with Condition~\ref{cond: more tail} and if } z_\alpha \neq 0)
        \end{cases},
    \end{align*}
    $\epsilon = \sigma_{\#,a} n^{-q}$, $\tilde{\alpha} = \alpha_2 = \alpha_4 = \alpha_6 = 1$, $\tilde{q}=q_2=q_4=q_6= 2 q \log n \geq 2$ and $\tilde{t} = t_2 = t_4 = t_6 = \exp(1) > 1$, and using Theorem~\ref{thm: expect Var} to bound $(\sigma_{\dagger,a} - \sigma_{\#,a})^2$ (with the dummy $\delta$ in Theorem~\ref{thm: expect Var} replaced by $\delta'$), the desired result follows.
\end{proof}

\begin{proof}[Proof of Corollary~\ref{coro: VC & hull rates}]
    The key is to upper bound uniform entropy integrals $J(2 \delta, \funclass,M)$ based on bounds on the covering number. In this proof, to simplify notations, $C_1(\nu)$ denotes a constant depending on $\nu$ only that may vary line by line, and similarly for $C_2(\nu)$.
    First consider the case when $\funclass$ is VC with VC index $\nu$. For all $\delta > 0$ smaller than some absolute constant,
    \begin{align*}
        J(2 \delta,\funclass,M) &\leq \int_0^{2 \delta} \sqrt{C_1(\nu) + 2 \nu \log (1/\epsilon) } \intd \epsilon \\
        &\leq C_1(\nu) \int_0^{2 \delta} \sqrt{\log(1/\epsilon)} \intd \epsilon \\
        &= C_1(\nu) \int_\infty^{\sqrt{-\log(2\delta)}} x \intd(\exp(-x^2)) \qquad\qquad \left( x := \sqrt{\log(1/\epsilon)} \right) \\
        &= C_1(\nu) I
    \end{align*}
    where $I := -\int_{\sqrt{-\log(2\delta)}}^\infty x \intd(\exp(-x^2))$.
    By integration by parts and the assumption that $\delta$ is sufficiently small,
    \begin{align*}
        I &= -\left. \left[ x \exp(-x^2) \right] \right|_{x=\sqrt{-\log(2\delta)}}^\infty + \int_{\sqrt{-\log(2\delta)}}^\infty \exp(-x^2) \intd x \\
        &= 2\delta \sqrt{\log \left( \frac{1}{2\delta} \right)} - \int_{\sqrt{-\log(2\delta)}}^\infty \frac{x}{2x^2} \intd(\exp(-x^2)) \\
        &\leq 2\delta \sqrt{\log \left( \frac{1}{2\delta} \right)} - \frac{1}{2} \int_{\sqrt{-\log(2\delta)}}^\infty x \intd(\exp(-x^2)) \\
        &= 2\delta \sqrt{\log \left( \frac{1}{2\delta} \right)} + \frac{1}{2} I,
    \end{align*}
    so $I \leq 4 \delta \sqrt{- \log ( 2\delta)}$, and thus
    \begin{equation}
        J(2 \delta,\funclass,M) \leq C_1(\nu) \delta \sqrt{\log(1/\delta)}. \label{eq: VC UEI bound}
    \end{equation}
    By Theorem~\ref{thm: asymptotic BE bound}, for all sufficiently large $n$ and sufficiently small $\delta$, the left-hand side of \eqref{eq: asymptotic BE bound} is bounded by
    $$\bigO \left( \tilde{\Delta}(\delta) + \delta \sqrt{\log(1/\delta)} + \frac{\log(1/\delta)}{\sqrt{n}} + (\delta + n^{-1/2}) \log n \right).$$
    Taking $\delta = (\expect \| \tilde{Q}_a - Q_{\#,a} \|_{2}^2)^{1/3}$ yields the desired rate.
    Additionally under approximate sub-Weibull conditions \ref{cond: tail} and \ref{cond: more tail}, $\tilde{\Delta}(\delta)$ can be replaced by $\tilde{\Delta}'(\delta)$. Taking $\delta = \tilde{r}_1(n) [(1+\tilde{d}_1) \tilde{c}_1^{-1} \log n]^{1/\tilde{q}_1}/(2M)$, which tends to zero as $n \to \infty$, yields the other desired rate.

    Next, consider the case when $\funclass$ is a VC-hull such that the VC index of the associated VC class is $\nu$. For all $\delta>0$ smaller than some absolute constant,
    \begin{equation}
        J(2\delta,\funclass,M) \leq \int_0^{2 \delta} \sqrt{ C_2(\nu) \epsilon^{-2\nu/(2\nu+1)}} \intd \epsilon = C_2(\nu) \int_0^{2 \delta} \epsilon^{-\nu/(2\nu+1)} \intd \epsilon = C_2(\nu) \delta^{(\nu+1)/(2\nu+1)}. \label{eq: VC hull UEI bound}
    \end{equation}
    By Theorem~\ref{thm: asymptotic BE bound}, for all sufficiently large $n$ and sufficiently small $\delta$, the left-hand side of \eqref{eq: asymptotic BE bound} is bounded by
    $$\bigO \left( \tilde{\Delta}(\delta) + \delta^{(\nu+1)/(2\nu+1)} + \frac{1}{\delta^{2\nu/(2\nu+1)} \sqrt{n}} + (\delta + n^{-1/2}) \log n \right).$$
    Taking $\delta = (\expect \| \tilde{Q}_a - Q_{\#,a} \|_{2}^2)^{(2\nu+1)/(5\nu+3)} + n^{-(2\nu+1)/(6 \nu +2)}$ yields the desired rate.
    Additionally under Conditions~\ref{cond: tail} and \ref{cond: more tail}, taking $\delta = \tilde{r}_1(n) [(1+\tilde{d}_1) \tilde{c}_1^{-1} \log n]^{1/\tilde{q}_1}/(2M)$, which tends to zero as $n \to \infty$, yields the other desired rate.
\end{proof}

\begin{proof}[Proof of Proposition~\ref{prop: sign Var bias}]
    By Lemma~\ref{lemma: transform sqaure},
    \begin{align*}
        &\sigma_{\dagger,a}^2 - \sigma_{\#,a}^2 \\
        &= \expect[P_n \transform_a(\tilde{Q}_a)^2 - \tilde{\psi}_a^2] - P_* \transform_a(Q_{\#,a})^2 + \psi_{*,a}^2 \\
        &= \expect \left[ P_n \left\{ \transform_a(\tilde{Q}_a)^2 - \transform_a(Q_{\#,a})^2 \right\} \right] - \Var(\tilde{\psi}_a) \\
        &= \expect \left[ P_n \left\{ \frac{\ind(A=a)}{\pi_*(a \mid X)^2} \right\} \left\{ [Y-\tilde{Q}_a(X)]^2 - [Y-Q_{\#,a}(X)]^2 \right\} \right] \\
        &\quad+ 2 \expect \left[ P_n \frac{\ind(A=a)}{\pi_*(a \mid X)} \{Y - \tilde{Q}_a(X)\} \tilde{Q}_a(X) \right] - 2 P_* \frac{\ind(A=a)}{\pi_*(a \mid X)} \{Y-Q_{\#,a}(X)\} Q_{\#,a}(X) \\
        &\quad+ \expect[P_n (\tilde{Q}_a^2 - Q_{\#,a}^2)] - \Var(\tilde{\psi}_a).
    \end{align*}
    Proposition~\ref{prop: sign Var bias} follows immediately from Lemma~\ref{lemma: bound Var tail psi}. Both $\Theta_+(n^{-1})$ terms in \eqref{eq: asymptotic Var bias} and \eqref{eq: asymptotic Var bias 2} correspond to $\Var(\tilde{\psi}_a)$.

    Similarly,
    \begin{align*}
        &\sigma_{\dagger}^2 - \sigma_{\#}^2 \\
        &= \expect\left[ P_n \left\{ \transform_1(\tilde{Q}_1)^2 - \transform_1(Q_{\#,1})^2 \right\} + P_n \left\{ \transform_0(\tilde{Q}_0)^2 - \transform_0(Q_{\#,0})^2 \right\} - 2 P_n \left\{ \transform_1(\tilde{Q}_1) \transform_0(\tilde{Q}_0) - \transform_1(Q_{\#,1}) \transform_1(Q_{\#,0}) \right\} \right] \\
        &\quad- \Var(\tilde{\psi}) \\
        &= \expect \left[ P_n \left\{ \frac{\ind(A=1)}{\pi_*(1 \mid X)^2} \right\} \left\{ [Y-\tilde{Q}_1(X)]^2 - [Y-Q_{\#,1}(X)]^2 \right\} \right] \\
        &\quad+ \expect \left[ P_n \left\{ \frac{\ind(A=0)}{\pi_*(0 \mid X)^2} \right\} \left\{ [Y-\tilde{Q}_0(X)]^2 - [Y-Q_{\#,0}(X)]^2 \right\} \right] \\
        &\quad+ \expect[P_n (\tilde{Q}_1^2 - Q_{\#,1}^2)] + \expect[P_n (\tilde{Q}_0^2 - Q_{\#,0}^2)] - \Var(\tilde{\psi}).
    \end{align*}
\end{proof}

\subsection{Series regression}

I will use the notation in Section~\ref{sec: light tail example}.

\begin{proof}[Proof of Proposition~\ref{prop: series tail}]
    Since regressions with $T_K \Phi_K(X)$ and $\Phi_K(X)$ lead to the same estimators, without loss of generality, assume that $T_K$ is identity.
    By SeriesCond~\ref{seriescond: eigen}, $P_* \Phi_K(X) \Phi_K(X)^\top$ is invertible. 
    Let $Z := \Phi_K(X) \Phi_K(X)^\top - P_* \Phi_K(X) \Phi_K(X)^\top$ and $\mathcal{E}$ denote the event that $\lambda_{\max}((P_n-P_*) \Phi_K(X) \Phi_K(X)^\top) \leq \underline{\lambda}/2$.
    By SeriesCond~\ref{seriescond: l2} and \ref{seriescond: eigen}, $\lambda_{\max}(Z) \leq \lambda_{\max}(\Phi_K(X) \Phi_K(X)^\top) + \lambda_{\max}(P_* \Phi_K(X) \allowbreak \Phi_K(X)^\top) \leq \xi_K^2 + \bar{\lambda}$.
    Since $\expect[Z^2] = P_* \Phi_K(X) \Phi_K(X)^\top \Phi_K(X) \Phi_K(X)^\top - (P_* \Phi_K(X) \Phi_K(X)^\top)^2$ and $\lambda_{\max}(P_* \Phi_K(X) \Phi_K(X)^\top \Phi_K(X) \Phi_K(X)^\top) \leq \xi_K^2 \lambda_{\max} (P_* \Phi_K(X) \Phi_K(X)^\top) \leq \xi_K^2 \bar{\lambda}$, it holds that $\lambda_{\max}(Z^2) \leq \xi_K^2 \bar{\lambda}$.
    Applying matrix Bernstein inequality \citepsupp[e.g., Theorem~6.1.1 in][]{Tropp2015} to the sample mean of $Z$, namely $P_n Z = (P_n-P_*) \Phi_K(X) \Phi_K(X)^\top$, yields that
    $$\Pr(\mathcal{E}) \geq 1-2K \exp \left\{ - \frac{n \underline{\lambda}^2}{8 \xi_K^2 \bar{\lambda} + 4 (\xi_K^2 + \bar{\lambda}) \underline{\lambda}/3} \right\}.$$
    By Weyl's inequality, when $\mathcal{E}$ occurs, $\lambda_{\min}(P_n \Phi_K(X) \Phi_K(X)^\top) \geq \underline{\lambda} - \lambda_{\max}((P_n-P_*) \Phi_K(X) \Phi_K(X)^\top) \geq \underline{\lambda}/2 > 0$ and so $P_n \Phi_K(X) \Phi_K(X)^\top$ is invertible.
    Moreover, $\lambda_{\max}(P_n \Phi_K(X) \Phi_K(X)^\top) \leq \lambda_{\max}(P_* \Phi_K(X) \allowbreak \Phi_K(X)^\top) + \lambda_{\max}((P_n-P_*) \Phi_K(X) \Phi_K(X)^\top) \leq \bar{\lambda} + \underline{\lambda}/2$.

    Let $\epsilon := Y - Q_\#(X) = Y - \Phi_K(X)^\top \beta_\#$ and $W := \Phi_K(X) \epsilon$. 
    Since $Y$ is bounded and $Q_\#$ is bounded by some constant independent of $K$ when $\mathcal{E}$ occurs. Thus, when $\mathcal{E}$ occurs, $\epsilon$ is uniformly bounded in $K$.
    Suppose that $\mathcal{E}$ occurs, then
    $$\hat{\beta} - \beta_\# = \{P_n \Phi_K(X) \Phi_K(X)^\top\}^{-1} \{ P_n \Phi_K(X) Y - P_n \Phi_K(X) \Phi_K(X)^\top \beta_\#\} = \{P_n \Phi_K(X) \Phi_K(X)^\top\}^{-1} P_n W.$$
    and
    \begin{align*}
        \| \hat{Q} - Q_\# \|_2^2 &= P_* \{ \Phi_K(X)^\top (\hat{\beta}-\beta_\#) \}^2 \\
        &= (\hat{\beta} - \beta_\#)^\top \{ P_* \Phi_K(X) \Phi_K(X)^\top \} (\hat{\beta} - \beta_\#) \\
        &= \{P_n W^\top\} \{P_n \Phi_K(X) \Phi_K(X)^\top\}^{-1} \{ P_* \Phi_K(X) \Phi_K(X)^\top \} \{P_n \Phi_K(X) \Phi_K(X)^\top\}^{-1} \{P_n W\}.
    \end{align*}

    By construction, $P_* W=0$. Applying matrix Bernstein inequality to $P_n W$ yields that, there exists a constant $C_2$ independent of $(K,n)$ such that, for any $t>0$,
    $$\Pr( \| P_n W \|_\ltwo > t) \leq (K+1) \exp \left\{ - C_2 \frac{n t^2}{2 \xi_K^2 + 2 \xi_K t/3} \right\}.$$
    Thus, for any $t>0$,
    \begin{align*}
        &\Pr \left( \frac{\| \hat{Q} - Q_\# \|_2}{\sqrt{K/n}} > t \right) = \Pr \left( \frac{\| \hat{Q} - Q_\# \|_2^2}{K/n} > t^2 \right) \\
        &\leq \Pr \left( \frac{\| \hat{Q} - Q_\# \|_2^2}{K/n} > t^2,  \mathcal{E} \right) + 1 - \Pr(\mathcal{E}) \\
        &\leq \Pr \left( \frac{n}{K} \frac{\bar{\lambda}}{(\underline{\lambda}/2)^2} \| P_n W \|_{\ltwo}^2 > t^2 \right) + 2K \exp \left\{ - \frac{n \underline{\lambda}^2}{8 \xi_K^2 \bar{\lambda} + 4 (\xi_K^2 + \bar{\lambda}) \underline{\lambda}/3} \right\} \\
        &\leq (K+1) \exp \left\{ - A_1 \frac{\sqrt{n} K t^2}{A_2 \sqrt{n} \xi_K^2 + A_3 \xi_K \sqrt{K} t} \right\} + 2K \exp \left\{ - \frac{n \underline{\lambda}^2}{8 \xi_K^2 \bar{\lambda} + 4 (\xi_K^2 + \bar{\lambda}) \underline{\lambda}/3} \right\}
    \end{align*}
    where $A_1,A_2,A_3$ are constants independent of $(K,n)$. The desired result follows from SeriesCond~\ref{seriescond: growth rate}.
\end{proof}

It is possible to use concentration inequalities for vector $\ell_2$ norms \citepsupp[e.g.][]{Pinelis1994,Maurer2021}, particularly those centered at the expected norm, to obtain sharper tail bounds.

\bibliographystylesupp{apalike}
\bibliographysupp{ref}

\end{document}